\subjclass[2010]{05E40, 13C05, 16W50}
\keywords{artinian modules, cone of hilbert functions, graded rings and modules, lex-segment ideals}
\newtheorem{thm}{Theorem}[section]
\newtheorem{prop}[thm]{Proposition}
\newtheorem{lemma}[thm]{Lemma}
\newtheorem{cor}[thm]{Corollary}
\newtheorem{Def}{Definition}[section]
\theoremstyle{definition}
\newtheorem*{rem}{Remark}
\theoremstyle{definition}
\newtheorem{exa}{Example}[section]
\theoremstyle{remark}
\newcommand{\Hc}{$\mathbb{H}$}
\newcommand{\Ex}{\text{Ex}}
\definecolor{corn}{cmyk}{0,0,1,0} \definecolor{beans}{cmyk}{.4,0.1,1,0} \definecolor{plum}{cmyk}{.6,1,0,0} \definecolor{blueberry}{cmyk}{1,.6,0,0} \definecolor{cranberry}{cmyk}{0,1,.5,0} \definecolor{apricot}{cmyk}{0,0.25,1,0} \definecolor{eggplant}{cmyk}{.25,1,0,0} \definecolor{lemon}{cmyk}{.1,0,1,0} \definecolor{potato}{cmyk}{.0005,.55,1,0} \colorlet{llemon}{lemon!60}
\newcommand{\bw}[1]{{#1}} \newcommand{\col}[1]{}                  
\definecolor{gr}{cmyk}{.25,.25,.25,.25}
\definecolor{lgr}{cmyk}{.1,.1,.1,.1}
\definecolor{dgr}{cmyk}{.55,.55,.55,.55}
\begin{document}

\title{The Cone of Hilbert Functions in the non-standard graded Case}

\author{Daniel Brinkmann}
\address{Daniel Brinkmann\\
Institut f\"ur Mathematik\\
Universit\"at Osnabr\"uck}
\email{dabrinkm@uos.de}

\author{Marianne Merz}
\address{Marianne Merz\\
Institut f\"ur Mathematik\\
Freie Universit\"at Berlin}
\email{mamerz@math.fu-berlin.de}

\maketitle
\begin{abstract}
We describe the cone of Hilbert functions of artinian graded modules finitely generated in degree 0 over the polynomial ring $R=k[x,y]$ with the non-standard grading $\deg(x)=1$ and $\deg(y)=n$, where $n$ is any natural number.
\end{abstract}


\section{Introduction}
In 2006 Mats Boij and Jonas S\"oderberg conjectured a beautiful structure theorem on the cone of Betti tables of graded modules over the polynomial ring $R=k[x_1,\dots,x_n]$ with standard grading $\deg(x_i)=1$. They described the cone in terms of the extremal rays and gave an algorithm to decompose any finitely generated Cohen-Macaulay module in a positive rational combination of some tables generating the extremal rays, calling these tables \emph{pure}. The existence of pure Betti tables were proved in char$(k)=0$ by Eisenbud, Fl{\o}ystad and Schreyer in 2007. The full conjecture was subsequently proven in arbitrary characteristic by Eisenbud and Schreyer by providing a connection between Betti tables of modules over $R$ and cohomology tables of special vector bundles over $\mathbb{P}^{n-1}$, calling these vector bundles \emph{supernatural}. There is a recent comprehensive survey by Gunnar Fl{\o}ystad \cite{Fl}.\\

A natural question is what happens in the non-standard graded case. A research group at a summer-school in Snowbird, UT in 2010 \cite{Ba} investigated the case $R=k[x,y]$ with $\deg (x)=1$ and $\deg (y)=2$ and they realized that even in this very special case of non-standard grading it was not possible to describe the extremal rays by similar easy to define "\emph{pure}" tables.\\
Thus, the next step is to look at a coarser invariant of graded modules: the Hilbert function. In the standard graded case, Mats Boij and Greg Smith described the cone of Hilbert functions of modules of dimension $d$ finitely generated in degree 0, which coincides with the Hilbert polynomial in degrees larger than a fixed $a$ by the extremal rays as well as by the supporting hyperplanes, see \cite{Bo} and \cite{BS}.\\
In the present article, we look at a similar cone for a simple non-standard graded case. Indeed, we consider artinian graded modules generated in degree 0 over $R=k[x,y]$, where $\deg(x)=1$ and $\deg(y)=n$ for some $n \in \mathbb{N}$. We can specify the extremal rays by a recursive structure (see theorem (\ref{extr})), which provides us an algorithm to decompose any h-vector of the mentioned modules in a positive rational combination of generators of the extremal rays.

\section{Describing the Cone}

Let $R=k[x,y]$ be the graded ring with\,$\deg(x)=1$\,and\,$\deg(y)=n, \,n \geq 1$, where $k$ is a field of characteristic zero.
For an $\mathbb{N}$-graded $R$-module $M=\bigoplus_{i\geq 0} M_i$ the Hilbert function $h_M:\mathbb{N} \rightarrow \mathbb{N}$ is defined by $h_M(i):= \dim _kM_i.$ As we are only looking at artinian modules, the Hilbert function has only finitely many nontrivial values and it therefore coincides with the h-vector of $M$. For more details about Hilbert functions and h-vectors see \cite{Br}.

If we allow generators of our module to be in arbitrary degrees, there is no restriction on the possible h-vectors, in fact they would cover the whole positive orthant, so we concentrate on modules generated in degree 0.

The set of h-vectors, more generally Hilbert functions, naturally forms a semigroup: $h_{M\oplus N}(i)=h_M(i)+h_N(i)$. Therefore it makes sense to look at the cone of h-vectors, denoted by
 $$\mathbb{H} := \mathrm{cone}\{\text{h-vector of artinian }\, R\!-\!\text{modules gen. in} \deg \text{ }0\}\subseteq \bigoplus_{j \in \mathbb{N}} \mathbb{Q}.$$

We want to make our h-vectors live in a finite dimensional vectorspace. Therefore we usually work with limited degrees: \Hc$(d):= \mathbb{H}\cap \mathbb{Q}^{d+1}$. We freely identify $(h_0,\ldots,h_e,0,\ldots)$ with $(h_0,\ldots,h_e)$.

Because of the additivity of the Hilbert function,  we can write any h-vector $h \in \mathbb{H}$ of a module as a sum of h-vectors of $R$-algebras, which can be identified with quotients $R/I$ with $I$ an homogeneous ideal.
In the sequel we need the notion of lex-segment ideals, which uses the lexicographic order.

\begin{Def} The \emph{lexicographic} order of monomials $x^{a_1}y^{b_1}$, $x^{a_2}y^{b_2}\in k[x,y]$ (with arbitrary $\mathbb{N}$-grading) is defined as $$x^{a_1}y^{b_1}<_{lex}x^{a_2}y^{b_2}\;:\Leftrightarrow \;a_1>a_2\;\text{ or }\;a_1=a_2\text{ and }b_1>b_2.$$
\end{Def}

\begin{Def} Let $I$ be a monomial ideal in $R=k[x,y]$ with $\deg(x)=1$\,and\, $\deg(y)=n,\;\; I_d$ the group of homogeneous elements of degree $d$ in $I$. \\  We call $I$ a \emph{lex-segment ideal} iff for every monomial $x^ay^b\in I_d$ the monomials $x^{a+n}y^{b-1},\ldots,x^{d-n}y$, $x^d$ belong to $I_d$.
\end{Def}

\pagebreak

The following theorem due to G. Dalzotto and E. Sbarra states that $\mathbb{H}$ is generated by the h-vectors of the $R$-algebras $R/I$, where $I$ is a lex-segment ideal. 

\begin{thm}[\cite{DS}, Theorem 4.16]\label{Mac}
Let $R=k[x,y]$, $I$ be a homogeneous ideal in $R$ with $\deg(x)=1$ and $\deg(y)=n$, $n\in\mathbb{N}$. There exists a unique lex-segment ideal $L$ such that $h_{R/I}(t) = h_{R/L}(t)$ for any $t\in\mathbb{N}.$
\end{thm}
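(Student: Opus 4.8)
The plan is to pass to a monomial ideal, build $L$ degree by degree as a nested sequence of lex segments whose sizes are forced by $h_{R/I}$, and then observe that uniqueness is automatic. First I would replace $I$ by its initial ideal $\operatorname{in}_<(I)$ for a term order $<$ refining the grading (compare by degree, break ties by $<_{lex}$). Then $\operatorname{in}_<(I)$ is again a homogeneous ideal, the standard monomials form a homogeneous $k$-basis of $R/\operatorname{in}_<(I)$ as well as of $R/I$, so $h_{R/I}=h_{R/\operatorname{in}_<(I)}$, and we may assume $I$ is a monomial ideal. Put $c_d:=\dim_k I_d$ and $N_d:=\dim_k R_d=\lfloor d/n\rfloor+1$; the monomials of degree $d$ are $x^{d-nb}y^b$ for $b=0,\dots,\lfloor d/n\rfloor$, and in the order of the definition the lex segment of size $s$ in degree $d$ is $\{x^d,x^{d-n}y,\dots,x^{d-(s-1)n}y^{s-1}\}$, i.e. the $s$ monomials of smallest $y$-degree.

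The heart of the argument is two growth estimates for the numbers $c_d$. Since multiplication by $x$ is injective and maps $I_d$ into $I_{d+1}$, we get $c_{d+1}\ge c_d$ for all $d$. Less obviously, I claim
\[
c_d\ge 1\ \Longrightarrow\ c_{d+n}\ge c_d+1 .
\]
To prove this, choose a monomial $x^ay^b\in I$ of degree $d$ with $b$ maximal among degree-$d$ monomials of $I$; then $x^ay^{b+1}=y\cdot x^ay^b$ lies in $I_{d+n}$, but $x^ay^{b+1}\notin x^nI_d$, since otherwise the degree-$d$ monomial $x^{a-n}y^{b+1}$ would lie in $I$ with $y$-degree $b+1>b$, a contradiction. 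Hence $x^ay^{b+1}$ together with the $c_d$ monomials spanning $x^nI_d$ gives $c_d+1$ distinct monomials in $I_{d+n}$. I expect this small lemma to be the main obstacle — not because it is technically hard, but because it is exactly the combinatorial fact that lets multiplication by $y$ stay inside the lex model, and one has to hit on the right monomial to multiply.

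Now set $L_d:=$ the lex segment of size $c_d$ in degree $d$ and $L:=\bigoplus_d L_d$. Since $R$ is generated as a $k$-algebra by $x$ (degree $1$) and $y$ (degree $n$), to see that $L$ is an ideal it suffices to check $xL_d\subseteq L_{d+1}$ and $yL_d\subseteq L_{d+n}$. But $xL_d$ is again a lex segment, of size $c_d$, in degree $d+1$, so $xL_d\subseteq L_{d+1}$ follows from $c_{d+1}\ge c_d$; and for $c_d\ge 1$ the set $yL_d=\{x^dy,x^{d-n}y^2,\dots\}$ consists of the degree-$(d+n)$ monomials occupying lex positions $2,\dots,c_d+1$, so $yL_d\subseteq L_{d+n}$ follows from $c_{d+n}\ge c_d+1$ (and $yL_d=\emptyset$ when $c_d=0$). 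By construction $L$ is a lex-segment ideal, and $h_{R/L}(d)=N_d-c_d=h_{R/I}(d)$ for all $d$. Finally, uniqueness is immediate: if $L,L'$ are lex-segment ideals with $h_{R/L}=h_{R/L'}$, then $\dim_k L_d=N_d-h_{R/L}(d)=\dim_k L'_d$ for every $d$, and two lex segments of equal size in the same degree coincide, so $L=L'$. (One could alternatively argue via a generic initial ideal, since in two variables the lex-segment ideals are precisely the monomial ideals fixed by the grading-preserving Borel action $y\mapsto y+cx^{n}$; but the direct construction above is self-contained.)
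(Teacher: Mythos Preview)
The paper does not actually prove this theorem: it is quoted as Theorem~4.16 of \cite{DS} and used as a black box, with no argument given. So there is no ``paper's own proof'' to compare against; your proposal supplies what the paper deliberately outsources.

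On its own merits your argument is correct and complete. The reduction to a monomial ideal via an initial ideal is standard and valid. The two growth bounds are exactly what is needed: $c_{d+1}\ge c_d$ is immediate from injectivity of multiplication by $x$, and your proof of $c_d\ge 1\Rightarrow c_{d+n}\ge c_d+1$ by picking the degree-$d$ monomial of $I$ with maximal $y$-exponent is clean (note that when $a<n$ the monomial $x^ay^{b+1}$ cannot lie in $x^nI_d$ at all, so the contradiction hypothesis is vacuous there and the conclusion still holds). With those bounds in hand, the verification that $L=\bigoplus_d L_d$ is closed under multiplication by $x$ and $y$ is exactly as you say, since $xL_d$ is the lex segment of size $c_d$ in degree $d+1$ and $yL_d$ occupies lex positions $2,\dots,c_d+1$ in degree $d+n$. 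Uniqueness is indeed automatic from the definition of lex segment. Your parenthetical remark about the Borel-type action $y\mapsto y+cx^n$ is a nice alternative viewpoint but, as you note, unnecessary for the proof.
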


The lex-segment ideals are monomial ideals and there is a very nice and convenient way of illustrating them as staircases in the bivariate case, a more general description can be found in \cite{St}.

\begin{exa}\label{bsp1}
Let $\deg(y)=2$ and $I=\langle x^6,x^2y,xy^2,y^3 \rangle$. Then the lattice points in the non-shaded area form a $k$-basis of $R/I$.

\unitlength1cm
\begin{picture}(5,3.2)
\put(1.5,0){\begin{tikzpicture}
\fill [gray!20] (4.6,0)--(3,0)--(3,.5)--(1,.5)--(1,1)--(.5,1)--(.5,1.5)--(0,1.5)--(0,2)--(4.6,2)--(4.6,0);
\draw [-latex] (0,0)--(5,0);
\draw [-latex] (0,0)--(0,2.5);
\draw (4.8,-.2) node{x} (-.2,2.2) node{y};
\draw (3,0)--(3,.5)--(1,.5)--(1,1)--(.5,1)--(.5,1.5)--(0,1.5);
\filldraw[black](0,0) circle (1.5pt) (0,.5) circle (1.5pt) (0,1) circle (1.5pt) (.5,0) circle (1.5pt) (.5,.5) circle (1.5pt)(1,0) circle (1.5pt)(1.5,0) circle (1.5pt)(2,0) circle (1.5pt)(2.5,0) circle (1.5pt);
\end{tikzpicture}
}
\end{picture}

Drawing for every generator a box marked with the corresponding degree, we get a more simple diagram:

\begin{picture}(5,2.3)
\put(2,0.3){\begin{tikzpicture}
\draw (0,0)rectangle(.5,.5) (.5,0)rectangle(1,.5) (1,0)rectangle(1.5,.5) (1.5,0)rectangle(2,.5) (2,0)rectangle(2.5,.5) (2.5,0)rectangle(3,.5) (0,.5)rectangle(.5,1) (.5,.5)rectangle(1,1) (0,1)rectangle(.5,1.5);
\draw (.25,.25) node{0} (.75,.25) node{1} (1.25,.25) node{2} (1.75,.25) node{3} (2.25,.25) node{4} (2.75,.25) node{5} (.25,.75) node{2} (.75,.75) node{3} (.25,1.25) node{4};
\end{tikzpicture}
}
\end{picture}

The h-vector of this module is $h=(1,1,2,2,2,1)$, but there exist other monomial ideals with the same h-vector for the quotient:

\begin{picture}(5,3.7)
\put(2,2.75){$\langle x^4,xy^2,y^3 \rangle$}
\put(2,.5){ \begin{tikzpicture}
\draw (0,0)rectangle(.5,.5) (.5,0)rectangle(1,.5) (1,0)rectangle(1.5,.5) (1.5,0)rectangle(2,.5) 
(0,.5)rectangle(.5,1) (.5,.5)rectangle(1,1) (1,.5)rectangle(1.5,1) (1.5,.5)rectangle(2,1) 
(0,1)rectangle(.5,1.5);
\draw (.25,.25) node{0} (.75,.25) node{1} (1.25,.25) node{2} (1.75,.25) node{3}
 (.25,.75) node{2} (.75,.75) node{3} (1.25,.75) node{4} (1.75,.75) node{5} 
 (.25,1.25) node{4};
\end{tikzpicture}}

\put(6.2,2.7){$\langle x^4,x^3y,x^2y^2,y^3 \rangle$}
\put(6,0){
\begin{tikzpicture}
\draw (0,0)rectangle(.5,.5) (.5,0)rectangle(1,.5) (1,0)rectangle(1.5,.5) (1.5,0)rectangle(2,.5) 
(0,.5)rectangle(.5,1) (.5,.5)rectangle(1,1) (1,.5)rectangle(1.5,1)  
(0,1)rectangle(.5,1.5) (.5,1)rectangle(1,1.5);
\draw (.25,.25) node{0} (.75,.25) node{1} (1.25,.25) node{2} (1.75,.25) node{3}
 (.25,.75) node{2} (.75,.75) node{3} (1.25,.75) node{4}  
 (.25,1.25) node{4} (.75,1.25) node{5};
\draw (-.1,-.1) .. controls (.5,-.5) and (1.5,-.5)..(2.1,-.1) ..controls (2.7,.3) and (2.7,1.2)..(2.1,1.6);
\draw (-.1,-.1) .. controls (-.7,.3) and (-.7,1.2)..(-.1,1.6) ..controls (.5,2) and (1.5,2)..(2.1,1.6);
\end{tikzpicture}}

\end{picture}

The encircled staircase corresponds to the lex-segment ideal with respect to $h$; just stack the boxes as far left as possible. In the sequel we will identify frequently an h-vector with the corresponding staircase diagram of the lex-segment ideal.
\end{exa}

\pagebreak

Given the h-vector $h$ as a sum of h-vectors of $R$-algebras we attach to this decomposition a three-dimensional diagram with boxes nested in a corner as follows:

\begin{itemize}[leftmargin=2em]
  \item[(1)] Tilt the corresponding staircases forward so they are lying flat on the floor.
	\item[(2)] Blow them up to cubes of height one.
	\item[(3)] Stack the resulting box diagrams with respect to their degrees. 
	\item[(4)] Some of these boxes may overlap, then drop these boxes down to the next lower box of the same degree.
\end{itemize}

We call this stack an $h$-$diagram$ corresponding to the h-vector $h$. Note that these diagrams depend on the decomposition of $h$, only the total number of boxes in each degree is fixed by $h$.
We demonstrate the construction in an example:

\begin{exa}
Let $n=3$ and $h=(3,2,1,4,1,0,2,1)=$\\ $=\;(1,1,1,2,0,0,0,0)\;+\;(1,1,0,1,1,0,1,1)\;+\;(1,0,0,1,0,0,1,0)$\\
The corresponding staircases look like this:

\unitlength1cm
\begin{picture}(11,2)
\put(1,.2){
\begin{tikzpicture}
\draw (0,0)rectangle(.5,.5);
\draw (.5,0)rectangle(1,.5);
\draw (1,0)rectangle(1.5,.5);
\draw (1.5,0)rectangle(2,.5);
\draw (0,.5)rectangle(.5,1);
\draw (.25,.25) node{0} (.75,.25) node{1} (1.25,.25) node{2} (1.75,.25) node{3}
      (.25,.75) node{3};      
\end{tikzpicture}
}

\put(5.6,.2){
\begin{tikzpicture}
\draw (0,0)rectangle(.5,.5);
\draw (.5,0)rectangle(1,.5);
\draw (0,.5)rectangle(.5,1);
\draw (.5,.5)rectangle(1,1);
\draw (0,1)rectangle(.5,1.5);
\draw (.5,1)rectangle(1,1.5);
\draw (.25,.25) node{0} (.75,.25) node{1} 
      (.25,.75) node{3} (.75,.75) node{4}
      (.25,1.25) node{6} (.75,1.25) node{7};      
\end{tikzpicture}
}

\put(9.8,.2){
\begin{tikzpicture}
\draw (0,0)rectangle(.5,.5);
\draw (0,.5)rectangle(.5,1);
\draw (0,1)rectangle(.5,1.5); 
\draw (.25,.25) node{0} (.25,.75) node{3} (.25,1.25) node{6};     
\end{tikzpicture}
}
\end{picture}

Turning the staircases down and blowing them up gives:

\begin{picture}(11,2.3)
\put(.8,0){
\begin{tikzpicture}
 \draw (0,0)--(2.5,0);
 \draw (0,0)--(0,1);
 \draw (0,0)--(-1,-1);
 \filldraw [fill=lgr!70,draw=black] (0,.5)--(-.5*.7,.5-.5*.7)--(.5-.5*.7,.5-.5*.7)--(.5,.5)--(0,.5);
 \filldraw [fill=lgr!70,draw=black] (.5,.5)--(.5-.5*.7,.5-.5*.7)--(1-.5*.7,.5-.5*.7)--(1,.5)--(.5,.5);
 \filldraw [fill=lgr!70,draw=black] (1,.5)--(1-.5*.7,.5-.5*.7)--(1.5-.5*.7,.5-.5*.7)--(1.5,.5)--(1,.5);
 \filldraw [fill=lgr!70,draw=black] (1.5,.5)--(1.5-.5*.7,.5-.5*.7)--(2-.5*.7,.5-.5*.7)--(2,.5)--(1.5,.5);
 \filldraw [fill=lgr!70,draw=black] (-.5*.7,.5-.5*.7)--(.5-.5*.7,.5-.5*.7)--(.5-.7,.5-.7)--(-.7,.5-.7)--(-.5*.7,.5-.5*.7);
 
 \filldraw [fill=dgr, draw=black] (.5-.5*.7,.5-.5*.7)--(.5-.7,.5-.7)--(.5-.7,-.7)--(.5-.5*.7,-.5*.7)--(.5-.5*.7,.5-.5*.7);
 \filldraw [fill=dgr, draw=black] (2,.5)--(2-.5*.7,.5-.5*.7)--(2-.5*.7,-.5*.7)--(2,0)--(2,.5);
 
 \filldraw [fill=gr, draw=black] (-.7,.5-.7) rectangle (.5-.7,-.7);
 \filldraw [fill=gr, draw=black] (.5-.5*.7,.5-.5*.7) rectangle (1-.5*.7,-.5*.7);
 \filldraw [fill=gr, draw=black] (1-.5*.7,.5-.5*.7) rectangle (1.5-.5*.7,-.5*.7);
 \filldraw [fill=gr, draw=black] (1.5-.5*.7,.5-.5*.7) rectangle (2-.5*.7,-.5*.7);
 
 \draw (.45-.5*.7,.68-.5*.7) node{\emph{0}};
 \draw (.95-.5*.7,.68-.5*.7) node{\emph{1}};
 \draw (1.45-.5*.7,.68-.5*.7) node{\emph{2}};
 \draw (1.95-.5*.7,.68-.5*.7) node{\emph{3}};
 \draw (.45-1*.7,.68-1*.7) node{\emph{3}};
\end{tikzpicture}
}
 \put(5.4,0){
\begin{tikzpicture}
 \draw (0,0)--(2,0);
 \draw (0,0)--(0,1);
 \draw (0,0)--(-1.2,-1.2);
 \filldraw [fill=lgr!70,draw=black] (0,.5)--(-.5*.7,.5-.5*.7)--(.5-.5*.7,.5-.5*.7)--(.5,.5)--(0,.5);
 \filldraw [fill=lgr!70,draw=black] (.5,.5)--(.5-.5*.7,.5-.5*.7)--(1-.5*.7,.5-.5*.7)--(1,.5)--(.5,.5);
 \filldraw [fill=lgr!70,draw=black] (-.5*.7,.5-.5*.7)--(-1*.7,.5-1*.7)--(.5-1*.7,.5-1*.7)--(.5-.5*.7,.5-.5*.7)--(-.5*.7,.5-.5*.7);
 \filldraw [fill=lgr!70,draw=black] (.5-.5*.7,.5-.5*.7)--(.5-1*.7,.5-1*.7)--(1-1*.7,.5-1*.7)--(1-.5*.7,.5-.5*.7)--(.5-.5*.7,.5-.5*.7);
 \filldraw [fill=lgr!70,draw=black] (-1*.7,.5-1*.7)--(-1.5*.7,.5-1.5*.7)--(.5-1.5*.7,.5-1.5*.7)--(.5-1*.7,.5-1*.7)--(-1*.7,.5-1*.7);
 \filldraw [fill=lgr!70,draw=black] (.5-1*.7,.5-1*.7)--(.5-1.5*.7,.5-1.5*.7)--(1-1.5*.7,.5-1.5*.7)--(1-1*.7,.5-1*.7)--(.5-1*.7,.5-1*.7);
 
 \filldraw [fill=dgr, draw=black] (1,.5)--(1-.5*.7,.5-.5*.7)--(1-.5*.7,-.5*.7)--(1,0)--(1,.5);
 \filldraw [fill=dgr, draw=black] (1-.5*.7,.5-.5*.7)--(1-1*.7,.5-1*.7)--(1-1*.7,-1*.7)--(1-.5*.7,-.5*.7)--(1-.5*.7,.5-.5*.7);
 \filldraw [fill=dgr, draw=black] (1-1*.7,.5-1*.7)--(1-1.5*.7,.5-1.5*.7)--(1-1.5*.7,-1.5*.7)--(1-1*.7,-1*.7)--(1-1*.7,.5-1*.7);
 
 \filldraw [fill=gr, draw=black] (-1.5*.7,.5-1.5*.7)rectangle (.5-1.5*.7,-1.5*.7);
 \filldraw [fill=gr, draw=black] (.5-1.5*.7,.5-1.5*.7)rectangle (1-1.5*.7,-1.5*.7);
 
 \draw (.45-.5*.7,.68-.5*.7) node{\emph{0}};
 \draw (.95-.5*.7,.68-.5*.7) node{\emph{1}};
 \draw (.45-1*.7,.68-1*.7) node{\emph{3}};
 \draw (.95-1*.7,.68-1*.7) node{\emph{4}};
 \draw (.45-1.5*.7,.68-1.5*.7) node{\emph{6}};
 \draw (.95-1.5*.7,.68-1.5*.7) node{\emph{7}};
\end{tikzpicture}
}
 \put(9.6,0){
\begin{tikzpicture}
 \draw (0,0)--(2,0);
 \draw (0,0)--(0,1);
 \draw (0,0)--(-1.2,-1.2);
 \filldraw [fill=lgr!70,draw=black] (0,.5)--(-.5*.7,.5-.5*.7)--(.5-.5*.7,.5-.5*.7)--(.5,.5)--(0,.5);
 \filldraw [fill=lgr!70,draw=black] (-.5*.7,.5-.5*.7)--(-1*.7,.5-1*.7)--(.5-1*.7,.5-1*.7)--(.5-.5*.7,.5-.5*.7)--(-.5*.7,.5-.5*.7);
 \filldraw [fill=lgr!70,draw=black] (-1*.7,.5-1*.7)--(-1.5*.7,.5-1.5*.7)--(.5-1.5*.7,.5-1.5*.7)--(.5-1*.7,.5-1*.7)--(-1*.7,.5-1*.7);
 
 \filldraw [fill=dgr, draw=black] (.5,.5)--(.5-.5*.7,.5-.5*.7)--(.5-.5*.7,-.5*.7)--(.5,0)--(.5,.5);
 \filldraw [fill=dgr, draw=black] (.5-.5*.7,.5-.5*.7)--(.5-1*.7,.5-1*.7)--(.5-1*.7,-1*.7)--(.5-.5*.7,-.5*.7)--(.5-.5*.7,.5-.5*.7);
 \filldraw [fill=dgr, draw=black] (.5-1*.7,.5-1*.7)--(.5-1.5*.7,.5-1.5*.7)--(.5-1.5*.7,-1.5*.7)--(.5-1*.7,-1*.7)--(.5-1*.7,.5-1*.7);
 \filldraw [fill=gr, draw=black] (-1.5*.7,.5-1.5*.7)rectangle (.5-1.5*.7,-1.5*.7);
 
 \draw (.45-.5*.7,.68-.5*.7) node{\emph{0}};
 \draw (.45-1*.7,.68-1*.7) node{\emph{3}};
 \draw (.45-1.5*.7,.68-1.5*.7) node{\emph{6}};
 
\end{tikzpicture}
}
\end{picture}

Stacking these box diagrams with respect to their degrees we get one big stack:

\begin{picture}(8,3.3)
\put(2.6,0){
\begin{tikzpicture}
 \draw (0,0)--(4,0);
 \draw (0,0)--(0,2);
 \draw (0,0)--(-1.2,-1.2);
\filldraw [fill=gr,draw=black] (.5-.5*.7,.5-.5*.7)rectangle ( 1-.5*.7,-.5*.7);
\filldraw [fill=lgr!70, draw=black]  (1,.5)--(1.5,.5)--(1.5-.5*.7,.5-.5*.7)--(1-.5*.7,.5-.5*.7)--(1,.5);
\filldraw [fill=dgr, draw=black] (1.5,.5)--(1.5,0)--(1.5-.5*.7,-.5*.7)--(1.5-.5*.7,.5-.5*.7)--(1.5,.5);
\filldraw [fill=gr, draw=black] (1-.5*.7,-.5*.7)rectangle(1.5-.5*.7,.5-.5*.7);
\draw  (1.45-.5*.7,.68-.5*.7) node{\emph{2}};
\filldraw [fill=lgr!70, draw=black]  (1.5,.5)--(2,.5)--(2-.5*.7,.5-.5*.7)--(1.5-.5*.7,.5-.5*.7)--(1.5,.5);
\filldraw [fill=dgr, draw=black]  (2,.5)--(2,0)--(2-.5*.7,-.5*.7)--(2-.5*.7,.5-.5*.7)--(2,.5);
\filldraw [fill=dgr, draw=black] (.5-1*.7,-1*.7+.5)--(.5-.5*.7,-.5*.7+.5)--(.5-.5*.7,-.5*.7)--(.5-1*.7,-1*.7)--(.5-1*.7,-1*.7+.5);
\filldraw [fill=gr, draw=black] (1.5-.5*.7,.5-.5*.7)rectangle(2-.5*.7,-.5*.7);
\filldraw [fill=gr, draw=black] (-1*.7,.5-1*.7)rectangle(.5-1*.7,-1*.7);
\draw (1.95-.5*.7,.68-.5*.7) node{\emph{3}};

\filldraw [fill=lgr!70, draw=black] (.5,1)--(.5-.5*.7,1-.5*.7)--(1-.5*.7,1-.5*.7)--(1,1)--(.5,1);
\filldraw [fill=dgr, draw=black] (1,1)--(1-.5*.7,1-.5*.7)--(1-.5*.7,.5-.5*.7)--(1,.5)--(1,1);
\filldraw [fill=gr, draw=black] (1-.5*.7,1-.5*.7)rectangle(.5-.5*.7,-.5*.7+.5);
\draw  (.95-.5*.7,1.18-.5*.7) node{\emph{1}};
\filldraw [fill=lgr!70, draw=black] (.5-1*.7,1-1*.7)-- (.5-.5*.7,1-.5*.7)-- (1-.5*.7,1-.5*.7)-- (1-1*.7,1-1*.7)-- (.5-1*.7,1-1*.7);
\filldraw [fill=dgr, draw=black] (1-.5*.7,1-.5*.7)--(1-1*.7,1-1*.7)--(1-1*.7,.5-1*.7)--(1-.5*.7,.5-.5*.7)--(1-.5*.7,1-.5*.7);
\draw (.95-1*.7,1.18-1*.7) node{\emph{4}};
\filldraw [fill=gr, draw=black]  (-1.5*.7,1-1.5*.7)rectangle (.5-1.5*.7,.5-1.5*.7) ;
\filldraw  [fill=lgr!70, draw=black](.5-1.5*.7,1-1.5*.7)-- (.5-1*.7,1-1*.7)-- (1-1*.7,1-1*.7)-- (1-1.5*.7,1-1.5*.7)-- (.5-1.5*.7,1-1.5*.7);
\filldraw [fill=dgr, draw=black]   (1-1*.7,1-1*.7)--(1-1.5*.7,1-1.5*.7)--(1-1.5*.7,.5-1.5*.7)--(1-1*.7,.5-1*.7)--(1-1*.7,1-1*.7);
\filldraw [fill=gr, draw=black] (.5-1.5*.7,.5-1.5*.7)rectangle(1-1.5*.7,1-1.5*.7);
\draw  (.95-1.5*.7,1.18-1.5*.7) node{\emph{7}};

\filldraw [fill=lgr!70,draw=black] (0,1.5)--(-.5*.7,1.5-.5*.7)--(.5-.5*.7,-.5*.7+1.5)--(.5,1.5)--(0,1.5);
\filldraw [fill=dgr, draw=black] (.5-.5*.7,-.5*.7+1.5)--(.5,1.5)--(.5,1)--(.5-.5*.7,-.5*.7+1)--(.5-.5*.7,-.5*.7+1.5);
\draw (.45-.5*.7,1.68-.5*.7) node{\emph{0}};
\filldraw [fill=lgr!70, draw=black]  (-.5*.7,1.5-.5*.7)--(.5-.5*.7,-.5*.7+1.5)--(.5-1*.7,-1*.7+1.5)--(-1*.7,-1*.7+1.5)--(-.5*.7,1.5-.5*.7);
\filldraw [fill=dgr, draw=black] (.5-.5*.7,-.5*.7+1.5)--(.5-1*.7,-1*.7+1.5)--(.5-1*.7,-1*.7+1)--(.5-.5*.7,-.5*.7+1)--(.5-.5*.7,-.5*.7+1.5);
\filldraw [fill=gr, draw=black] (-1*.7,-1*.7+1.5)rectangle(.5-1*.7,-1*.7+1);
\draw (.45-1*.7,1.68-1*.7) node{\emph{3}};
\filldraw [fill=lgr!70, draw=black] (-1*.7,1.5-1*.7)--(.5-1*.7,-1*.7+1.5)--(.5-1.5*.7,-1.5*.7+1.5)--(-1.5*.7,-1.5*.7+1.5)--(-1*.7,-1*.7+1.5);
\filldraw [fill=dgr, draw=black] (.5-1*.7,-1*.7+1.5)--(.5-1.5*.7,-1.5*.7+1.5)--(.5-1.5*.7,-1.5*.7+1)--(.5-1*.7,-1*.7+1)-- (.5-1*.7,-1*.7+1.5);   
\filldraw [fill=gr, draw=black]  (-1.5*.7,1.5-1.5*.7)rectangle (.5-1.5*.7,1-1.5*.7) ;
\draw  (.45-1.5*.7,1.68-1.5*.7) node{\emph{6}};
\end{tikzpicture}
}
\end{picture}


Now some of these boxes are not grounded so we let them drop:

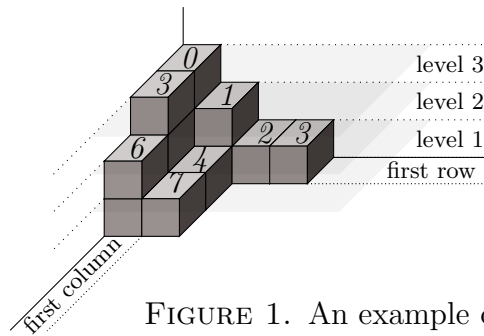
\begin{figure}[h]
\begin{picture}(12,3.1)
\put(0.4,-1.2){
\begin{tikzpicture}
 \draw (0,0)--(4,0);
 \draw (0,0)--(0,2);
 \draw (0,0)--(-2.3,-2.3);
\filldraw [fill=gr,draw=black] (.5-1.5*.7,.5-1.5*.7)rectangle ( -1.5*.7,-1.5*.7);
\filldraw  [fill=lgr!70, draw=black](.5-1.5*.7,.5-1.5*.7)-- (.5-1*.7,.5-1*.7)-- (1-1*.7,.5-1*.7)-- (1-1.5*.7,.5-1.5*.7)-- (.5-1.5*.7,.5-1.5*.7);
\filldraw [fill=dgr, draw=black]   (1-1*.7,.5-1*.7)-- (1-1.5*.7,.5-1.5*.7)--(1-1.5*.7,-1.5*.7)--(1-1*.7,-1*.7)--(1-1*.7,.5-1*.7);
\filldraw [fill=gr, draw=black] (.5-1.5*.7,-1.5*.7)rectangle(1-1.5*.7,.5-1.5*.7);
\draw  (.95-1.5*.7,.68-1.5*.7) node{\emph{7}};
\filldraw [fill=lgr!70, draw=black] (.5-1*.7,.5-1*.7)-- (.5-.5*.7,.5-.5*.7)-- (1-.5*.7,.5-.5*.7)-- (1-1*.7,.5-1*.7)-- (.5-1*.7,.5-1*.7);
\filldraw [fill=dgr, draw=black] (1-.5*.7,.5-.5*.7)--(1-1*.7,.5-1*.7)--(1-1*.7,-1*.7)--(1-.5*.7,-.5*.7)--(1-.5*.7,.5-.5*.7);
\draw (.95-1*.7,.68-1*.7) node{\emph{4}};
\filldraw [fill=lgr!70, draw=black]  (1,.5)--(1.5,.5)--(1.5-.5*.7,.5-.5*.7)--(1-.5*.7,.5-.5*.7)--(1,.5);
\filldraw [fill=dgr, draw=black] (1.5,.5)--(1.5,0)--(1.5-.5*.7,-.5*.7)--(1.5-.5*.7,.5-.5*.7)--(1.5,.5);
\filldraw [fill=gr, draw=black] (1-.5*.7,-.5*.7)rectangle(1.5-.5*.7,.5-.5*.7);
\draw  (1.45-.5*.7,.68-.5*.7) node{\emph{2}};
\filldraw [fill=lgr!70, draw=black]  (1.5,.5)--(2,.5)--(2-.5*.7,.5-.5*.7)--(1.5-.5*.7,.5-.5*.7)--(1.5,.5);
\filldraw [fill=dgr, draw=black]  (2,.5)--(2,0)--(2-.5*.7,-.5*.7)--(2-.5*.7,.5-.5*.7)--(2,.5);
\filldraw [fill=gr, draw=black] (1.5-.5*.7,.5-.5*.7)rectangle(2-.5*.7,-.5*.7);
\draw (1.95-.5*.7,.68-.5*.7) node{\emph{3}};
\fill[opacity=0.05](0,.5)--(-1.75*.7,.5-1.75*.7)--(3.3-1.75*.7,.5-1.75*.7)--(3.3,.5);
\draw [dotted] (2,.5)--(4,.5);
\draw [dotted] (-1.5*.7,.5-1.5*.7)--  (-2.5*.7,.5-2.5*.7);
\draw (3.55,.25)node{\scriptsize{level 1}};

\filldraw [fill=lgr!70, draw=black] (-1*.7,-1*.7+1)--(.5-1*.7,-1*.7+1)--(.5-1.5*.7,-1.5*.7+1)--(-1.5*.7,-1.5*.7+1)--(-1*.7,-1*.7+1);
\filldraw [fill=dgr, draw=black] (.5-1*.7,-1*.7+1)--(.5-1.5*.7,-1.5*.7+1)--(.5-1.5*.7,-1.5*.7+.5)--(.5-1*.7,-1*.7+.5)-- (.5-1*.7,-1*.7+1);   
\filldraw [fill=gr, draw=black]  (-1.5*.7,1-1.5*.7)rectangle (.5-1.5*.7,.5-1.5*.7) ;
\draw  (.45-1.5*.7,1.18-1.5*.7) node{\emph{6}};
\filldraw [fill=dgr, draw=black]  (.5-1*.7,-1*.7+1)--(.5-.5*.7,-.5*.7+1)--(.5-.5*.7,-.5*.7+.5)--(.5-1*.7,-1*.7+.5)--(.5-1*.7,-1*.7+1);
\filldraw [fill=lgr!70, draw=black] (.5,1)--(.5-.5*.7,1-.5*.7)--(1-.5*.7,1-.5*.7)--(1,1)--(.5,1);
\filldraw [fill=dgr, draw=black] (1,1)--(1-.5*.7,1-.5*.7)--(1-.5*.7,.5-.5*.7)--(1,.5)--(1,1);
\filldraw [fill=gr, draw=black] (1-.5*.7,1-.5*.7)rectangle(.5-.5*.7,-.5*.7+.5);
\draw  (.95-.5*.7,1.18-.5*.7) node{\emph{1}};
\fill[opacity=0.05](0,1)--(-1.75*.7,1-1.75*.7)--(3.3-1.75*.7,1-1.75*.7)--(3.3,1);
\draw [dotted] (1,1)--(4,1);
\draw [dotted] (-1.5*.7,1-1.5*.7)--  (-2.5*.7,1-2.5*.7);
\draw (3.55,.75)node{\scriptsize{level 2}};

\filldraw [fill=lgr!70,draw=black] (0,1.5)--(-.5*.7,1.5-.5*.7)--(.5-.5*.7,-.5*.7+1.5)--(.5,1.5)--(0,1.5);
\filldraw [fill=dgr, draw=black] (.5-.5*.7,-.5*.7+1.5)--(.5,1.5)--(.5,1)--(.5-.5*.7,-.5*.7+1)--(.5-.5*.7,-.5*.7+1.5);
\draw (.45-.5*.7,1.68-.5*.7) node{\emph{0}};
\filldraw [fill=lgr!70, draw=black]  (-.5*.7,1.5-.5*.7)--(.5-.5*.7,-.5*.7+1.5)--(.5-1*.7,-1*.7+1.5)--(-1*.7,-1*.7+1.5)--(-.5*.7,1.5-.5*.7);
\filldraw [fill=dgr, draw=black] (.5-.5*.7,-.5*.7+1.5)--(.5-1*.7,-1*.7+1.5)--(.5-1*.7,-1*.7+1)--(.5-.5*.7,-.5*.7+1)--(.5-.5*.7,-.5*.7+1.5);
\filldraw [fill=gr, draw=black] (-1*.7,-1*.7+1.5)rectangle(.5-1*.7,-1*.7+1);
\draw (.45-1*.7,1.68-1*.7) node{\emph{3}};
\fill[opacity=0.05](0,1.5)--(-1.75*.7,1.5-1.75*.7)--(3.3-1.75*.7,1.5-1.75*.7)--(3.3,1.5);
\draw [dotted] (.5,1.5)--(4,1.5);
\draw [dotted] (-1*.7,1.5-1*.7)--  (-2.5*.7,1.5-2.5*.7);
\draw (3.55,1.25)node{\scriptsize{level 3}};
\draw [densely dotted] (2,0)--(4,0) (2-.5*.7,-.5*.7)--(4,-.5*.7);
\draw (3.3,-.17) node {\scriptsize{first row}}; 
\draw [densely dotted] (.5-1.5*.7,-1.5*.7)--  (.5-3.3*.7,-3.3*.7);
\draw (.2-2.4*.7,-2.4*.7) node{\rotatebox{45}{\scriptsize{first column}}};
\end{tikzpicture}
}
\end{picture}
\caption{An example of an h-diagram}
\end{figure}
\end{exa}

We denote the maximal layers of the h-diagram where no internal stairs occur by levels and count them from the bottom. Later on these levels don't have to be of integer height. We are also talking about rows in the h-diagram meaning the stack of boxes in the rows parallel to the rear wall and columns analogous to the side wall as described in the picture.

The staircases are by construction decreasing, hence the boxes in the h-diagram are also decreasing in every row and in every column. Of course, for any such decreasing box diagram we can find a module with corresponding h-vector by identifying every level with the staircase of an ideal times the height and summing up. Considering the cone of h-vectors is equivalent to allowing levels in every rational height. Therefore we get the following proposition:


\begin{prop}\label{hcone}
Let\, $\deg(y)=n,\; n \in \mathbb{N}$. An element $h=(h_0,\ldots,h_d) \in \mathbb{Q}_{\geq 0}^{d+1}$ \linebreak belongs to the cone \Hc(d) if and only if there exists a decomposition of the components $h_i=\sum_{j=1}^{s_i}h_i^j$\; with \,$h_i^j\! \geq\! 0$\;for all\, $i=0,...,d$,\, $h_i^j\!=\!0$ for $j\! >\! s_i$ \,and
\begin{itemize}
\item[(1)] $h_i^j \geq h_{i+1}^j$ \hspace{13mm} for all\; $j=1,\ldots,s_d$ and $i=n(j-1),\ldots,d$\;  and \\
\vspace{-3mm}
\item[(2)] $h_{ni+r}^j \geq h_{n(i+1)+r}^{j+1}$\;\; for all \;$i=0,...,\lfloor \frac{d}{n}\rfloor,\; j=1,...,s_{ni}$ \;and\, $r=0,...,n-1$.
\end{itemize}
We will call any such decomposition an \emph{h-diagram}.
\end{prop}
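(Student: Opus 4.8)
The plan is to prove the two implications through the following dictionary between writings of $h$ as a sum of h-vectors of cyclic lex-segment modules and decompositions $h_i=\sum_j h_i^j$: given cyclic modules $R/L_1,\dots,R/L_m$ with $L_u$ lex-segment and $\sum_{u}h_{R/L_u}=Nh$, one sets $h_i^j:=\tfrac1N\#\{u:\ x^{i-n(j-1)}y^{j-1}\ \text{is a standard monomial of}\ R/L_u\}$, with the convention $h_i^j=0$ whenever $i<n(j-1)$. Thus $h_i^j$ records how the standard monomials of degree $i$ of the pieces split according to their $y$-degree $n(j-1)$, and $\sum_j h_i^j=h_i$ is immediate. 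I would begin by using Theorem~\ref{Mac}: $\mathbb{H}(d)$ is generated by the finitely many h-vectors $h_{R/L}$ with $L$ lex-segment and $h_{R/L}$ supported in degrees $\le d$, so it is a rational polyhedral cone; hence every $h\in\mathbb{H}(d)$ is a non-negative rational combination of such $h_{R/L}$, and after clearing denominators $\sum_u h_{R/L_u}=Nh$ for some $N\in\mathbb{N}$ (with repetitions allowed). Since ``$h\in\mathbb{H}(d)$'' and conditions (1)--(2) are all invariant under multiplying $h$, resp. every $h_i^j$, by a positive rational, it is enough to prove the equivalence when every $h_i^j\in\mathbb{Z}_{\ge0}$.

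For the implication $h\in\mathbb{H}(d)\Rightarrow$ (h-diagram): from the decomposition $\sum_u h_{R/L_u}=Nh$ I would form the $h_i^j$ as above and check (1)--(2). Each $R/L_u$ is a quotient by a monomial ideal, so its set of standard monomials is closed under taking divisors. Dividing a standard monomial by $x$ keeps its $y$-degree and lowers its total degree by $1$; carrying this over all $u$ gives $h_i^j\ge h_{i+1}^j$ at the level $j$ with $j-1$ the relevant $y$-degree, for $i\ge n(j-1)$, which is (1). Dividing by $y$ drops the $y$-degree by one step (i.e.\ the level index by $1$) and the total degree by $n$; writing the degree in the form $n(i+1)+r$ with $0\le r\le n-1$ (here $i+1=\lfloor\deg/n\rfloor$ is at least the $y$-degree because the module is generated in degree $0$), this is exactly $h_{ni+r}^{j}\ge h_{n(i+1)+r}^{j+1}$, which is (2). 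So the decomposition is an h-diagram; this is the ``tilt--blow up--stack--drop'' construction made algebraic.

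For the converse: given $h_i^j\in\mathbb{Z}_{\ge0}$ satisfying (1)--(2), put $K:=h_0^1$. Iterating (2) downward and then applying (1) shows $h_i^j\le K$ for all $i,j$, so if $K=0$ then $h=0$ and there is nothing to prove; assume $K\ge1$. For each $k$ with $1\le k\le K$ set $S_k:=\{x^my^b:\ m,b\ge0,\ h_{m+nb}^{b+1}\ge k\}$, a finite set since membership forces $m+nb\le d$. Condition (1) says each ``$y$-row'' $\{m:x^my^b\in S_k\}$ is an initial segment $\{0,1,\dots\}$ of $\mathbb{N}$, and condition (2) says these initial segments are nested (shrink as $b$ grows); together these are precisely the statement that $S_k$ is an order ideal of monomials, hence the set of standard monomials of $R/I_k$ for a unique monomial ideal $I_k$, with $R/I_k$ artinian and generated in degree $0$ because $1\in S_k$. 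Counting standard monomials of degree $i$ gives $h_{R/I_k}(i)=\#\{j\ge1:\ h_i^j\ge k\}$, where the convention $h_i^j=0$ for $i<n(j-1)$ makes the constraint $m=i-nb\ge0$ automatic. Summing over $k$ telescopes: $\sum_{k=1}^{K}h_{R/I_k}(i)=\sum_{j}h_i^j=h_i$, so $h$ is the h-vector of $\bigoplus_{k=1}^{K}R/I_k$ and therefore lies in $\mathbb{H}(d)$.

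The step I expect to cost the most care is the index bookkeeping that splits the single property ``the standard-monomial set of a monomial ideal is closed under divisors'' into the two separate inequalities (1) (division by $x$, inside one level) and (2) (division by $y$, from level $j+1$ down to level $j$), and, in the other direction, that checks (1)--(2) are exactly what is needed for each reconstructed $S_k$ to be an order ideal once more; the shift $i\mapsto i-n$ together with the residue $r$ in (2) are precisely the effect of one division by $y$ on the pair (total degree, $y$-degree), and keeping track of them correctly is the crux. A second point, small but genuinely needed and built into the geometric notion of an h-diagram, is the normalization $h_i^j=0$ for $i<n(j-1)$: the $j$-th level occupies only degrees $\ge n(j-1)$ because everything is generated in degree $0$, and without this the bare inequalities (1)--(2) would not characterize $\mathbb{H}(d)$. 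Finite generation of $\mathbb{H}(d)$ from Theorem~\ref{Mac} and the reductions from real to rational to integer coefficients are routine.
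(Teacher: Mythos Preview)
Your proof is correct and follows essentially the same approach as the paper: your sets $S_k$ are precisely the paper's ``levels'' of the h-diagram, and your order-ideal/closed-under-divisors argument is the algebraic translation of the paper's ``rows and columns decrease in the staircase'' argument. The only cosmetic difference is that you clear denominators at the outset to reduce to integral $h_i^j$, whereas the paper works with rational level heights $q_\ell$ and clears denominators at the end; both are the same idea.
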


\begin{proof}

To show that every element of the cone admits such a decomposition it is enough to show it for the generators since both conditions are additive.

Let $h=(1,h_1,\ldots,h_d)$ be a generator of the cone, e.g. the $h$-vector of some $R/I$. 
By theorem (\ref{Mac}) we may assume that $I$ is monomial, hence we can look at the corresponding staircase. Every box in this staircase marked with $i$ stands for a generator of $R/I$ of degree $i$. Setting $h_i^j=1$ for having a box marked with $i$ in the $j$-th row and $h_i^j=0$ if not we get the desired decomposition. As the staircases as the visualization of ideals are always nested in the corner, saying there are no holes and the boxes are decreasing from left to right, the conditions (1) and (2) are fulfilled.

For the other direction we build an h-diagram out of the decomposition of a vector $h \in \mathbb{Q}_{\geq0}^{d+1}$ setting $h_i^j$ boxes in the $j$-th row and the $(i-(j-1)n+1)$-th column of a three-dimensional diagram. Condition (1) ensures that the rows in this diagram are decreasing and condition (2) that the columns are decreasing as well. Cutting this diagram into levels as described before we get in every level $\ell$ a staircase corresponding to an ideal $I_\ell$ blown up to the levelheight $q_\ell$.\\
 Let $q$ be a common multiple of $q_\ell$ then $$M= \bigoplus_{levels}qq_\ell\, R/I_\ell$$ is an $R$-module with h-vector $q \cdot h$ of degree $d$ and therefore $h \in \mathbb{H}(d)$.\qedhere
\end{proof}

\pagebreak
   
Next we want to list the extremal points that are the first integer points on the extremal rays. We denote by $\Ex(d)$ the extremal points of the cone \Hc$(d)$ for a fixed integer $d$.\\ 

There are some distinguished h-vectors we want to give a special notation.

 We denote by $s^d=(\underbrace{1,...,1}_{n},2,\ldots,2,3,\ldots\lfloor\frac{d}{n}\rfloor+1)$ the h-vector of length $d+1$ of an R-algebra generated in degree 0 with maximal entries
and let $s_m$ be the $m$-th coefficient of $s^d$ for nontrivial entries ignoring the upper $d$ .

For $d \in \mathbb{N}$ write $d=n\cdot m +r$ where $r \in \{0,\dots,n-1\}$. We denote by $t^d$ the h-vector of the shape
$$(\underbrace{1,\ldots,1}_{r+1},\underbrace{0,\ldots,0}_{n-r-1},1,\ldots,1,0,\ldots,0,\ldots,\underbrace{1,\ldots,1}_{r+1})$$ 
The parts $1,\ldots,1,0,\ldots,0$ occur $m$ times, therefore $t^d$ has length $d+1$.\\
Note that this is the h-vector of the staircase given by a rectangle of size \linebreak $(r+1)\times (m+1)$,  we call $t^d$  the \emph{tower} of degree $d$.
In example (\ref{notextr}) we will see that the towers for $d\equiv n-1$ modulo $n$ are decomposable.\\

To write down the extremal points we need a kind of glueing operation:

\begin{Def}
Let $d=n\cdot m+r$ with $r\in\{0,\ldots,n-1\}$ \;and $h \in \mathbb{H}(n\cdot m-r-3)$. Then we define
$$t^d*h:=t^d+(\underbrace{0,\ldots,0}_{r+1},h_0,\ldots,h_{n\cdot m-r-3},\underbrace{0,\ldots,0}_{r+2}),$$
\end{Def}

\begin{rem}
The definition of the $*$-operation is not as arbitrary as it may look. Stated in terms of staircases, this is just the procedure of taking $t^d$ and glueing $h$ on the right hand side. This is obviously still an h-vector of length $d+1$.
\end{rem}

\begin{exa}
Let $n=3, d=7=3\cdot 2+1$ and $h=(1,1)$.\\We get \;$t^d= (1,1,0,1,1,0,1,1) $\, and \;$t^d*h= (1,1,1,2,1,0,1,1).$

In the language of staircases:

\unitlength1cm
\begin{picture}(10,1.3)
\put(2,0){
\begin{tikzpicture}
\draw (0,0)rectangle (.3,.3) (.3,0)rectangle (.6,.3) (0,.3)rectangle (.3,.6) (.3,.3)rectangle (.6,.6) (0,.6)rectangle (.3,.9) (.3,.6)rectangle (.6,.9);
\draw (1,.5) node {$*$};
\end{tikzpicture}
}
\put(3.5,.3){
\begin{tikzpicture}
\draw (0,0)rectangle (.3,.3) (.3,0)rectangle (.6,.3);
\draw (1.2,.2) node {$=$};
\end{tikzpicture}
}
\put(5.5,0){
\begin{tikzpicture}
\draw (0,0)rectangle (.3,.3) (.3,0)rectangle (.6,.3) (0,.3)rectangle (.3,.6) (.3,.3)rectangle (.6,.6) (0,.6)rectangle (.3,.9) (.3,.6)rectangle (.6,.9) (.6,0)rectangle (.9,.3) (.9,0)rectangle (1.2,.3);

\end{tikzpicture}
}
\end{picture}
\end{exa}

Now we can state our main result which will be proven in section 3:

\begin{thm}\label{extr}
Let $\deg(y)=n \in \mathbb{N}$  \, and\, $d=n\cdot m+r$ with $m \geq 0$ and $ r\in \{0,\dots,n-1\}$.
 The extremal points of \;\Hc(d)\, are given by:
\begin{itemize}
\item[(0)] For $d \leq n-1: \;\;\Ex(d)=\{h=(1,\dots,1)$ of length $ \leq d+1 \}$.
\item[(1)] For $r\in\{0,\ldots,n-2\}: \;\; \Ex(d)=\Ex(d-1)\cup s^{d}\cup t^{d}\cup t^{d}*\Ex(d-2r-3)$.
\item[(2)] For $r=n-1: \;\;\Ex(d)=\Ex(d-1)\cup s^{d}$.
\end{itemize}
\end{thm}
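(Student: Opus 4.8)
The plan is to prove the theorem by induction on $d$, using Proposition~\ref{hcone} as the working description of $\mathbb{H}(d)$ and throughout identifying an $h$-vector with the staircase of its lex-segment ideal (Theorem~\ref{Mac}). The base case $d\le n-1$ is immediate: since $\deg(y)=n>d$, no power of $y$ contributes, so every artinian quotient is a quotient of $k[x]$ and the only staircases are the initial segments $(1,\dots,1)$; these are clearly the extremal points, and each generates a ray because the cone $\mathbb{H}(d)$ is just the set of weakly decreasing $0/1$-ish h-vectors $\dots$ more precisely one checks directly that any $h\in\mathbb{H}(d)$ is a non-negative combination of the $(1,\dots,1)$'s and that none of these is itself such a combination of the others.

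For the inductive step I would separate the two congruence cases. The easy direction in both is membership: one must check that $s^d$, $t^d$, and each $t^d*h$ with $h\in\Ex(d-2r-3)$ actually lies in $\mathbb{H}(d)$ and is extremal. Membership of $s^d$ and $t^d$ follows by exhibiting the obvious staircase (a full triangle, resp.\ an $(r+1)\times(m+1)$ rectangle) and reading off a valid $h$-diagram via Proposition~\ref{hcone}; membership of $t^d*h$ follows from the remark after the $*$-definition together with the inductive hypothesis that $h$ is already a legal h-vector in the appropriate degree, plus a check that the glueing respects conditions (1) and (2). Extremality of the "old" points $\Ex(d-1)$, viewed inside $\mathbb{H}(d)$ as vectors with last coordinate $0$, should follow because the face $h_d=0$ of $\mathbb{H}(d)$ is exactly $\mathbb{H}(d-1)$, and an extremal ray of a face of a cone is an extremal ray of the cone.

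The substantive direction is completeness and minimality: every extremal point of $\mathbb{H}(d)$ appears in the listed set, and no listed point is a positive combination of the others. Here is where I expect the main obstacle. Given an extremal $h=(h_0,\dots,h_d)$ with $h_d>0$ (otherwise we are in $\Ex(d-1)$), the idea is to look at the staircase/h-diagram of $h$ and "peel off" a canonical piece: if $h$ is not already the full triangle $s^d$, then its staircase must have a genuine corner, and one argues that $h$ decomposes as a positive combination in which one summand is forced to be a tower $t^d$ (when $r<n-1$) — the tower being the minimal staircase shape that is nonzero in top degree $d$ — and the complementary summand, supported away from the "tower columns," is, after a shift, an element of $\mathbb{H}(d-2r-3)$. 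Extremality of $h$ then forces the complementary summand to itself be extremal, so $h=t^d*h'$ with $h'\in\Ex(d-2r-3)$, or $h=t^d$, or $h=s^d$. The case $r=n-1$ is where the tower degenerates: by Example~\ref{notextr} (referenced in the text) $t^{n-1+\text{(mult.\ of }n)}$ is decomposable, so no new family beyond $s^d$ survives, which is exactly statement~(2). Minimality — that the union is irredundant — I would get by a dimension/support argument: $s^d$ is the unique point with maximal entries, the $t^d*h'$ are distinguished from each other and from $t^d$ by their values on the "glued" coordinates $r+1,\dots,nm-r-2$ (where $t^d$ itself vanishes), and the $\Ex(d-1)$ points are distinguished by $h_d=0$; so a would-be relation among them, restricted to suitable coordinates, forces all but one coefficient to vanish.

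The delicate points, and the ones I would spend the most care on, are (i) verifying that the arithmetic shift in the $*$-operation, namely the index range $nm-r-3$ and the padding by $r+1$ zeros on the left and $r+2$ on the right, is exactly what conditions (1) and (2) of Proposition~\ref{hcone} demand — this is a bookkeeping argument about which rows and columns of the h-diagram the tower occupies and how condition (2)'s shift by $n$ in degree interacts with the width $r+1$ of the tower; and (ii) the claim that an extremal $h$ with $h_d>0$ which is not $s^d$ must have $t^d$ as a summand. For (ii) the argument is that among all h-vectors of length $d+1$ that are nonzero in degree $d$, the componentwise-minimal one is $t^d$ when $r<n-1$ (its staircase is the thinnest rectangle reaching height forcing a degree-$d$ box), so $h-\varepsilon t^d\in\mathbb{H}(d)$ for small $\varepsilon>0$, and extremality of $h$ plus the fact that $h$ is not a multiple of $t^d$ (unless $h=t^d$) gives the splitting $h=t^d+(\text{something in the glued region})$ after rescaling; one then checks the "something" lies in the translated copy of $\mathbb{H}(d-2r-3)$. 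Carrying out the induction then closes the argument.
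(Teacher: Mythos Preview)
Your plan has a genuine gap in the completeness direction, specifically in step~(ii). The claim that $t^d$ is componentwise minimal among h-vectors with $h_d>0$ is correct, but the implication you draw from it---that $h-\varepsilon t^d\in\mathbb{H}(d)$ for small $\varepsilon$---is false. Membership in $\mathbb{H}(d)$ is governed by the existence of a decomposition satisfying conditions (1) and (2) of Proposition~\ref{hcone}, not by componentwise inequalities. Concretely, take $n=2$, $d=4$, and $h=t^4*s^1=(1,1,2,0,1)$, which by the theorem is extremal and not equal to $s^4$. Then $h-\varepsilon t^4=(1-\varepsilon,1,2-\varepsilon,0,1-\varepsilon)$, and condition~(1) forces $h_0^1\ge h_1^1$, i.e.\ $1-\varepsilon\ge 1$, which fails for every $\varepsilon>0$. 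So the subtraction step does not go through. Worse, even if it did, your use of extremality is inverted: writing $h=\varepsilon t^d+(h-\varepsilon t^d)$ with both summands in the cone would force $h$ to be proportional to $t^d$, hence equal to $t^d$; this would ``prove'' that $s^d$ and $t^d$ are the \emph{only} extremal points with $h_d>0$, contradicting the very statement you are trying to establish.

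The paper takes a different route that avoids classifying extremal rays directly. Instead it proves (a) the listed set \emph{generates} $\mathbb{H}(d)$, via an explicit algorithm: first greedily subtract multiples of $s^d,s^{d-1},\ldots$ until some coordinate below $d$ vanishes (Lemma~\ref{lem1}); observe this already finishes when $r=n-1$ (Lemma~\ref{lem3}); in the remaining case cut the h-diagram at column $r+1$, recurse on the right piece (which lives in $\mathbb{H}(d-2r-3)$), and glue back via the $*$-operator to produce tower terms (Lemma~\ref{lem2}). Then (b) it checks irredundancy by a short case analysis (Lemma following~\ref{lem2}), much as you sketch in your final paragraph. What your approach would need, in place of the flawed subtraction, is precisely a spanning argument of this kind; the face argument for $\Ex(d-1)$ and the irredundancy sketch are fine, but the heart of the proof is the decomposition algorithm, and that is what is missing from your plan.
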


\begin{exa}
Let $\deg(y)=2$. We look for the extremal points up to degree 6:

\unitlength1cm
\begin{picture}(11,9)
\put(2,0){
\begin{tikzpicture}
\draw (-.1,.4)rectangle(.1,.6) 
      (.5,1.2)rectangle(.7,1.4) (.7,1.2)rectangle(.9,1.4)
      (1.2,2)rectangle(1.4,2.2) (1.4,2)rectangle(1.6,2.2) (1.6,2)rectangle(1.8,2.2) (1.2,2.2)rectangle(1.4,2.4)                       (1.4,.3)rectangle(1.6,.5) (1.4,.5)rectangle(1.6,.7)
      (2.1,3)rectangle(2.3,3.2) (2.3,3)rectangle(2.5,3.2) (2.5,3)rectangle(2.7,3.2) (2.7,3)rectangle(2.9,3.2) 
      (2.1,3.2)rectangle(2.3,3.4) (2.3,3.2)rectangle(2.5,3.4)
      (3.2,4)rectangle(3.4,4.2)(3.4,4)rectangle(3.6,4.2) (3.6,4)rectangle(3.8,4.2)(3.8,4)rectangle(4,4.2)(4,4)rectangle(4.2,4.2)
      (3.2,4.2)rectangle(3.4,4.4)(3.4,4.2)rectangle(3.6,4.4) (3.6,4.2)rectangle(3.8,4.4)
      (3.2,4.4)rectangle(3.4,4.6) 
      
      (3.4,2.7)rectangle(3.6,2.9) (3.6,2.7)rectangle(3.8,2.9) (3.8,2.7)rectangle(4,2.9) 
      (3.4,2.9)rectangle(3.6,3.1) 
      (3.4,3.1)rectangle(3.6,3.3) 
      
      (3.5,1.5)rectangle(3.7,1.7) (3.7,1.5)rectangle(3.9,1.7)
      (3.5,1.7)rectangle(3.7,1.9)
      (3.5,1.9)rectangle(3.7,2.1)
      
      (3.6,.2)rectangle(3.8,.4)
      (3.6,.4)rectangle(3.8,.6)
      (3.6,.6)rectangle(3.8,.8)
      (4.5,5.2)rectangle(4.7,5.4) (4.7,5.2)rectangle(4.9,5.4) (4.9,5.2)rectangle(5.1,5.4) (5.1,5.2)rectangle(5.3,5.4)                 (5.3,5.2)rectangle(5.5,5.4) (5.5,5.2)rectangle(5.7,5.4)
      (4.5,5.4)rectangle(4.7,5.6) (4.7,5.4)rectangle(4.9,5.6) (4.9,5.4)rectangle(5.1,5.6) (5.1,5.4)rectangle(5.3,5.6)
      (4.5,5.6)rectangle(4.7,5.8) (4.7,5.6)rectangle(4.9,5.8)
      (6,6.4)rectangle(6.2,6.6) (6.2,6.4)rectangle(6.4,6.6) (6.4,6.4)rectangle(6.6,6.6) (6.6,6.4)rectangle(6.8,6.6)
      (6.8,6.4)rectangle(7,6.6) (7,6.4)rectangle(7.2,6.6) (7.2,6.4)rectangle(7.4,6.6)
      (6,6.6)rectangle(6.2,6.8) (6.2,6.6)rectangle(6.4,6.8) (6.4,6.6)rectangle(6.6,6.8) (6.6,6.6)rectangle(6.8,6.8)
      (6.8,6.6)rectangle(7,6.8)
      (6,6.8)rectangle(6.2,7) (6.2,6.8)rectangle(6.4,7) (6.4,6.8)rectangle(6.6,7)
      (6,7)rectangle(6.2,7.2)
      
      (6.2,5.2)rectangle(6.4,5.4) (6.4,5.2)rectangle(6.6,5.4) (6.6,5.2)rectangle(6.8,5.4)
      (6.8,5.2)rectangle(7,5.4) (7,5.2)rectangle(7.2,5.4)
      (6.2,5.4)rectangle(6.4,5.6) (6.4,5.4)rectangle(6.6,5.6) (6.6,5.4)rectangle(6.8,5.6)
      (6.2,5.6)rectangle(6.4,5.8)
      (6.2,5.8)rectangle(6.4,6)
      
      (6.3,4)rectangle(6.5,4.2) (6.5,4)rectangle(6.7,4.2) (6.7,4)rectangle(6.9,4.2) (6.9,4)rectangle(7.1,4.2)
      (6.3,4.2)rectangle(6.5,4.4) (6.5,4.2)rectangle(6.7,4.4) 
      (6.3,4.4)rectangle(6.5,4.6) 
      (6.3,4.6)rectangle(6.5,4.8)
        
      (6.4,2.8)rectangle(6.6,3) (6.6,2.8)rectangle(6.8,3) (6.8,2.8)rectangle(7,3) 
      (6.4,3)rectangle(6.6,3.2) 
      (6.4,3.2)rectangle(6.6,3.4) 
      (6.4,3.4)rectangle(6.6,3.6)
           
      (6.5,1.6)rectangle(6.7,1.8) (6.7,1.6)rectangle(6.9,1.8)
      (6.5,1.8)rectangle(6.7,2) 
      (6.5,2)rectangle(6.7,2.2)
      (6.5,2.2)rectangle(6.7,2.4)
           
      (6.6,.1)rectangle(6.8,.3)
      (6.6,.3)rectangle(6.8,.5)
      (6.6,.5)rectangle(6.8,.7)
      (6.6,.7)rectangle(6.8,.9)
      
      (7.7,1.6)rectangle(7.9,1.8) (7.9,1.6)rectangle(8.1,1.8)
      (7.7,1.8)rectangle(7.9,2) (7.9,1.8)rectangle(8.1,2)
      (7.7,2)rectangle(7.9,2.2)
      (7.7,2.2)rectangle(7.9,2.4);

\draw [dotted] (.35,0)--(.35,1.5) (1.05,-.3)--(1.05,2.5) (1.95,-.6)--(1.95,3.5) (3.05,-.8)--(3.05,4.8) (4.35,0)--(4.35,5.9)                          (5.85,0)--(5.85,7.3);
\draw (-.2,-.1)node{\tiny{degree 0}} (.2,-.4)node{\tiny{degree 1}} (1,-.6)node{\tiny{degree 2}} (1.8,-.8)node{\tiny{degree 3}}; 
\draw (3.5,-.8)node{...};
\draw (5,-1) node{degree 6};
\draw [->] (.75,-.38)--(1.05,-.38);
\draw [->] (1.65,-.58)--(1.95,-.58);
\draw [->] (2.45,-.78)--(3.05,-.78);
\draw [->] (6,-1)--(8,-1);
\draw [<-] (-.3,-1)--(4,-1);

\draw [->] (.25,.5)--(1.2,.5);
\draw [->] (1.8,.5)--(3.4,.5);
\draw [->] (4.1,.5)--(6.4,.5);
\draw [->] (1.1,1.3)--(3.25,1.8);
\draw [white, line width=3pt] (1.5,.9)--(1.5,1.9);
\draw [->] (1.5,.9)--(1.5,1.9);
\draw [->] (2,2.2)--(3.1,2.7);
\draw [->] (4.1,1.9)--(6.1,1.9);
\draw [->] (4.15,3.05)--(5.9,3.05);
\draw [->] (4.3,4.4)--(5.9,5);
\draw [->] (3.7,.9)--(3.7,1.4);
\draw [->] (3.7,2.2)--(3.7,2.6);
\draw [->] (3.7,3.5)--(3.7,3.9);
\draw [->] (6.7,1)--(6.7,1.5);
\draw [->] (6.7,2.5)--(6.7,2.7);
\draw [->] (6.7,3.3)--(6.7,3.9);
\draw [->] (6.7,4.5)--(6.7,5.1);
\draw [->] (6.7,5.7)--(6.7,6.35);
\draw [->] (7,2)--(7.5,2);
\draw [->] (7.75,2.5)--(7,3.9);

\draw [->] (.1,.7)--(.5,1.1);
\draw [->] (.9,1.5)--(1.3,1.9);
\draw [->] (1.8,2.4)--(2.2,2.8);
\draw [->] (2.8,3.4)--(3.2,3.8);
\draw [->] (3.9,4.5)--(4.4,5);
\draw [->] (5.1,5.7)--(5.7,6.3);

\draw [white, line width=3pt] (.6,.1) ..controls (3,.1) and (3.2,2)..(3.2,3);
\draw [white, line width=3pt] (3.2,3) ..controls (3.2,3.9) and (2.2,3.9)..(2,3.9);
\draw [white, line width=3pt] (2,3.9) ..controls (.8,3.9) and (-.6,2)..(-.6,.7);
\draw [white, line width=3pt] (-.6,.7) ..controls (-.6,.2) and (-.2,.1)..(.6,.1);

\draw [white, line width=3pt] (7,1.3) ..controls (8.5,1.3) and (8.5,1.9)..(8.5,2.3);
\draw [white, line width=3pt] (8.5,2.3) ..controls (8.5,4) and (7.8,6.15)..(6.7,6.15);
\draw [white, line width=3pt] (6.7,6.15) ..controls (5.8,6.15) and (6,5.9)..(6,3.5);
\draw [white, line width=3pt] (6,3.5) ..controls (6,1.8) and (6.2,1.3)..(7,1.3);
\draw [gr, very thin] (.6,.1) ..controls (3,.1) and (3.2,2)..(3.2,3);
\draw [gr, very thin] (3.2,3) ..controls (3.2,3.9) and (2.2,3.9)..(2,3.9);
\draw [gr, very thin] (2,3.9) ..controls (.8,3.9) and (-.6,2)..(-.6,.7);
\draw [gr, very thin] (-.6,.7) ..controls (-.6,.2) and (-.2,.1)..(.6,.1);

\draw [gr, very thin] (7,1.3) ..controls (8.5,1.3) and (8.5,1.9)..(8.5,2.3);
\draw [gr, very thin] (8.5,2.3) ..controls (8.5,4) and (7.8,6.15)..(6.7,6.15);
\draw [gr, very thin] (6.7,6.15) ..controls (5.8,6.15) and (6,5.9)..(6,3.5);
\draw [gr, very thin] (6,3.5) ..controls (6,1.8) and (6.2,1.3)..(7,1.3);
\end{tikzpicture}
}
\end{picture}

The encircled parts show the recursive structure of the extremal points by using the $*$-operator. 
\end{exa}

There is a natural partial ordering in $\mathbb{Q}^{d+1}$:
$$h \leq g \quad :\Leftrightarrow \quad h_i \leq g_i \;\text{for all}\; i=0,\ldots,d.$$
The chains in this partial ordering offer totally ordered subsets in $\mathbb{Q}^{d+1}$.\\
In fact, this partial ordering harmonizes perfectly with the visualization of the h-vectors of $R$-algebras by staircases of lex-segment ideals by embedding the box-diagrams in each other, illustrated in the previous example by the arrows. Moreover, from any h-diagram we get a stack of staircases embedded consecutively and that involves a chain of h-vectors of $R$-algebras.

The decomposition algorithm that we use to prove theorem (\ref{extr}) leads to an h-diagram with staircases of extremal points in each level and as already mentioned this implies a totally ordered chain in the usual partial ordering in \Hc$(d)$.

\begin{cor}\label{cor}
Every element $h \in$ \Hc$(d)$ \;can be written as $h =\sum_{i \in I} q_i\! \cdot \!v^i,$\, with \linebreak$  v^i \in \Ex(d)$, where the $(v^i)_{i\in I}$ form a totally ordered chain.
\end{cor}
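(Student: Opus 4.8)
The plan is to read the decomposition off directly from an h-diagram of $h$, using the refined form of the decomposition algorithm that will underlie the proof of Theorem \ref{extr}. Concretely, given $h\in\Hc(d)$, Proposition \ref{hcone} provides an h-diagram, i.e.\ a decomposition $h_i=\sum_j h_i^j$ satisfying conditions (1) and (2); geometrically this is a three-dimensional box stack that is decreasing along every row and every column. In Section 3 we will show that one can always arrange this h-diagram so that the staircase occurring in each level is the lex-segment staircase of an extremal point of $\Hc(d)$, and I will take that statement as given here.

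The first step is then to make precise that in \emph{any} h-diagram the staircases attached to the successive levels are nested. Cutting the box stack at a real height $t>0$ produces the planar region consisting of all pairs (row $j$, column $c$) with $h_i^j\ge t$, where $i=(j-1)n+c-1$; by conditions (1) and (2) this region is the staircase of a lex-segment ideal, and as $t$ increases the region can only shrink. Hence, writing $S_1\supseteq S_2\supseteq\cdots\supseteq S_N$ for the staircases occurring at the levels $\ell=1,\dots,N$ counted from the bottom, with rational heights $q_1,\dots,q_N>0$, we get $S_{\ell+1}\subseteq S_\ell$ for every $\ell$.

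Next I would translate the containment of staircases into the componentwise partial order on h-vectors. If $S'\subseteq S$ are two lex-segment staircases, then for every degree $i$ the number of boxes of $S'$ in degree $i$ is at most the number of boxes of $S$ in degree $i$; writing $v'$ and $v$ for the associated h-vectors this is exactly $v'\le v$. Applying this to the chain $S_1\supseteq\cdots\supseteq S_N$ coming from the refined h-diagram yields extremal points $v^1\ge v^2\ge\cdots\ge v^N$ in $\Ex(d)$ that form a totally ordered chain, and by the very construction of the h-diagram $h=\sum_{\ell=1}^{N}q_\ell\,v^\ell$ with all $q_\ell\in\mathbb{Q}_{>0}$. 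Should the same extremal point happen to appear in several consecutive levels one just adds the corresponding heights, which does not disturb the chain property. This is precisely the asserted decomposition.

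The only genuinely nontrivial ingredient is the claim invoked at the outset, namely that the decomposition algorithm can be run so as to place an extremal point in every level; this is exactly what the proof of Theorem \ref{extr} in Section 3 establishes, and everything else here is the elementary observation that ``nested lex-segment staircases'' and ``chain in the componentwise order on h-vectors'' describe the same thing. So the main obstacle is not in the corollary itself but in the algorithmic content of Theorem \ref{extr}, whose output we are merely repackaging.
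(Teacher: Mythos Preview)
Your proposal is correct and follows essentially the same approach as the paper. The paper does not give a standalone proof of this corollary; it simply remarks (just before stating the corollary, and again at the end of the proof of Lemma~\ref{lem2}) that the decomposition algorithm of Section~3 produces an h-diagram whose levels are staircases of extremal points, and that the nested levels of any h-diagram yield a chain in the componentwise order---precisely the two observations you spell out. Your added detail on why cutting at increasing heights gives nested staircases, and why nested lex-segment staircases translate to componentwise inequalities, makes explicit what the paper leaves implicit, but the logical structure is the same.
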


This decomposition does not have to be unique even with a total order:

\begin{exa}
 Let $n=2, \\ h=(2,1,2,0,1)=(1,1,1,0,1)+(1,0,1,0,0)=t^4*s^0 \, +\,t^2 = $\\
 \hspace*{28.7mm} $=(1,1,2,0,1)+(1,0,0,0,0)=t^4*s^1\!+\!s^0.$\\
 Both decompositions are totally ordered.
\end{exa}

\section{Proof of Theorem 2.3.}

To prove the theorem we use an algorithm that decomposes any h-vector of an artinian graded module over $R$ finitely generated in degree 0.

To get the right intuition we illustrate an example.
\begin{exa}
Let $\deg(y)=3$ \,and\, $h=(3,3,2,4,2,1,2,1)=\\ =(1,1,1,2,1,1,1,0)\;+\;\;(1,1,0,1,1,0,1,1)\;\;+\;\;(1,1,1,1,0,0,0,0)$\\
\col{Giving every degree its own color the corresponding staircases look like this:}
\bw{The corresponding staircases look like this:}

\col{
\unitlength1cm
\begin{picture}(11,2.25)
\put(1.2,.4){
\begin{tikzpicture}
\filldraw [fill=plum,draw=black](0,0)rectangle(.5,.5);
\filldraw [fill=blueberry,draw=black](.5,0)rectangle(1,.5);
\filldraw [fill=eggplant,draw=black](1,0)rectangle(1.5,.5);
\filldraw [fill=cranberry,draw=black](1.5,0)rectangle(2,.5);
\filldraw [fill=cranberry,draw=black](0,.5)rectangle(.5,1);
\filldraw [fill=beans,draw=black] (.5,.5)rectangle(1,1);
\filldraw [fill=apricot,draw=black](1,.5)rectangle(1.5,1);
\filldraw [fill=corn,draw=black] (0,1)rectangle(.5,1.5);     
\end{tikzpicture}
}

\put(5.5,.4){
\begin{tikzpicture}
\filldraw [fill=plum,draw=black](0,0)rectangle(.5,.5);
\filldraw [fill=blueberry,draw=black](.5,0)rectangle(1,.5);
\filldraw [fill=cranberry,draw=black](0,.5)rectangle(.5,1);
\filldraw [fill=beans,draw=black] (.5,.5)rectangle(1,1);
\filldraw [fill=corn,draw=black](0,1)rectangle(.5,1.5);
\filldraw [fill=llemon,draw=black](.5,1)rectangle(1,1.5);      
\end{tikzpicture}
}

\put(9.5,.4){
\begin{tikzpicture}
\filldraw [fill=plum,draw=black](0,0)rectangle(.5,.5);
\filldraw [fill=blueberry,draw=black](.5,0)rectangle(1,.5);
\filldraw [fill=eggplant,draw=black] (1,0)rectangle(1.5,.5); 
\filldraw [fill=cranberry,draw=black] (0,.5)rectangle(.5,1);    
\end{tikzpicture}
}
\end{picture}
}
\bw{
\unitlength1cm
\begin{picture}(11,2.25)
\put(1.2,.4){
\begin{tikzpicture}
\draw (0,0)rectangle(.5,.5) (.5,0)rectangle(1,.5)  (1,0)rectangle(1.5,.5) (1.5,0)rectangle(2,.5);
\draw (0,.5)rectangle(.5,1) (.5,.5)rectangle(1,1) (1,.5)rectangle(1.5,1);
\draw (0,1)rectangle(.5,1.5);  
\draw (.25,.25) node{0} (.75,.25)node{1} (1.25,.25)node{2} (1.75,.25)node{3};
\draw (.25,.75) node{3} (.75,.75)node{4} (1.25,.75)node{5};
\draw (.25,1.25) node{6};  
\end{tikzpicture}
}

\put(5.5,.4){
\begin{tikzpicture}
\draw (0,0)rectangle(.5,.5) (.5,0)rectangle(1,.5);
\draw (0,.5)rectangle(.5,1) (.5,.5)rectangle(1,1);
\draw (0,1)rectangle(.5,1.5) (.5,1)rectangle(1,1.5); 
\draw (.25,.25) node{0} (.75,.25)node{1} ;
\draw (.25,.75) node{3} (.75,.75)node{4} ;
\draw (.25,1.25) node{6} (.75,1.25)node{7};       
\end{tikzpicture}
}

\put(9.5,.4){
\begin{tikzpicture}
\draw (0,0)rectangle(.5,.5) (.5,0)rectangle(1,.5) (1,0)rectangle(1.5,.5); 
\draw (0,.5)rectangle(.5,1); 
\draw (.25,.25) node{0} (.75,.25)node{1} (1.25,.25)node{2};
\draw (.25,.75) node{3};   
\end{tikzpicture}
}
\end{picture}
}

In this case we get as an $h$-diagram:

\unitlength1cm
\col{
\begin{picture}(8,4)
\put(2,0){
\begin{tikzpicture}
 \draw(0,0)--(3,0);
 \draw(0,0)--(0,2);
 \draw(0,0)--(-1.5,-1.5);

 \filldraw  [fill=plum,draw=black](0,1.5)--(-.5*.7,1.5-.5*.7)--(.5-.5*.7,-.5*.7+1.5)--(.5,1.5)--(0,1.5);


 \filldraw  [fill=cranberry,draw=black] (-.5*.7,1.5-.5*.7)--(.5-.5*.7,-.5*.7+1.5)--(.5-1*.7,-1*.7+1.5)--(-1*.7,-1*.7+1.5)--(-.5*.7,1.5-.5*.7);
 \filldraw [fill=cranberry,draw=black] (.5-.5*.7,-.5*.7+1.5)--(.5-1*.7,-1*.7+1.5)--(.5-1*.7,-1*.7+1)--(.5-.5*.7,-.5*.7+1)--(.5-.5*.7,-.5*.7+1.5);
 \filldraw [fill=cranberry,draw=black]  (-1*.7,-1*.7+1.5)rectangle(.5-1*.7,-1*.7+1);
;

 \filldraw [fill=corn,draw=black] (-1*.7,-1*.7+1)--(.5-1*.7,-1*.7+1)--(.5-1.5*.7,-1.5*.7+1)--(-1.5*.7,-1.5*.7+1)--(-1*.7,-1*.7+1);
 \filldraw  [fill=corn,draw=black] (.5-1*.7,-1*.7+1)--(.5-1.5*.7,-1.5*.7+1)--(.5-1.5*.7,-1.5*.7+.5)--(.5-1*.7,-1*.7+.5)-- (.5-1*.7,-1*.7+1);   
 \filldraw [fill=corn,draw=black]   (-1.5*.7,1-1.5*.7)rectangle (.5-1.5*.7,.5-1.5*.7)
           (.5-1.5*.7,.5-1.5*.7)rectangle ( -1.5*.7,-1.5*.7) ;

 \filldraw [fill=llemon,draw=black] (.5-1.5*.7,.5-1.5*.7)-- (.5-1*.7,.5-1*.7)-- (1-1*.7,.5-1*.7)-- (1-1.5*.7,.5-1.5*.7)-- (.5-1.5*.7,.5-1.5*.7);
 \filldraw  [fill=llemon,draw=black]  (1-1*.7,.5-1*.7)-- (1-1.5*.7,.5-1.5*.7)--(1-1.5*.7,-1.5*.7)--(1-1*.7,-1*.7)--(1-1*.7,.5-1*.7);
 \filldraw [fill=llemon,draw=black]  (.5-1.5*.7,-1.5*.7)rectangle(1-1.5*.7,.5-1.5*.7);

\filldraw  [fill=beans,draw=black](.5-1*.7,1-1*.7)-- (.5-.5*.7,1-.5*.7)-- (1-.5*.7,1-.5*.7)-- (1-1*.7,1-1*.7)-- (.5-1*.7,1-1*.7);
\filldraw [fill=beans,draw=black]  (1-.5*.7,1-.5*.7)--(1-1*.7,1-1*.7)--(1-1*.7,.5-1*.7)--(1-.5*.7,.5-.5*.7)--(1-.5*.7,1-.5*.7);
\filldraw [fill=beans,draw=black]  (1-1*.7,1-1*.7)rectangle(.5-1*.7,.5-1*.7);

\filldraw  [fill=apricot,draw=black](1-1*.7,.5-1*.7)-- (1-.5*.7,.5-.5*.7)-- (1.5-.5*.7,.5-.5*.7)-- (1.5-1*.7,.5-1*.7)-- (1-1*.7,.5-1*.7);
\filldraw [fill=apricot,draw=black]  (1.5-.5*.7,.5-.5*.7)--(1.5-1*.7,.5-1*.7)--(1.5-1*.7,-1*.7)--(1.5-.5*.7,-.5*.7)--(1.5-.5*.7,.5-.5*.7);
\filldraw [fill=apricot,draw=black]  (1.5-1*.7,.5-1*.7)rectangle(1-1*.7,-1*.7);
 
 \filldraw [fill=blueberry,draw=black] (.5,1.5)--(.5-.5*.7,1.5-.5*.7)--(1-.5*.7,1.5-.5*.7)--(1,1.5)--(.5,1.5);
\filldraw  [fill=blueberry,draw=black] (1,1.5)--(1-.5*.7,1.5-.5*.7)--(1-.5*.7,1-.5*.7)--(1,1)--(1,1.5);
\filldraw [fill=blueberry,draw=black] (.5-.5*.7,1.5-.5*.7)rectangle(1-.5*.7,1-.5*.7);

 \filldraw [fill=eggplant,draw=black]  (1,1)--(1.5,1)--(1.5-.5*.7,1-.5*.7)--(1-.5*.7,1-.5*.7)--(1,1);
 \filldraw  [fill=eggplant,draw=black] (1.5,1)--(1.5,.5)--(1.5-.5*.7,.5-.5*.7)--(1.5-.5*.7,1-.5*.7)--(1.5,1);
 \filldraw [fill=eggplant,draw=black] (1-.5*.7,.5-.5*.7)rectangle(1.5-.5*.7,1-.5*.7);

 \filldraw [fill=cranberry,draw=black]  (1.5,.5)--(2,.5)--(2-.5*.7,.5-.5*.7)--(1.5-.5*.7,.5-.5*.7)--(1.5,.5);
 \filldraw  [fill=cranberry,draw=black] (2,.5)--(2,0)--(2-.5*.7,-.5*.7)--(2-.5*.7,.5-.5*.7)--(2,.5);
 \filldraw  [fill=cranberry,draw=black] (1.5-.5*.7,.5-.5*.7)rectangle(2-.5*.7,-.5*.7);
 
\end{tikzpicture}
}
\end{picture}
}
\bw{
\begin{picture}(8,3.5)
\put(2,0){
\begin{tikzpicture}
 \draw(0,0)--(3,0);
 \draw(0,0)--(0,2);
 \draw(0,0)--(-1.5,-1.5);

 \filldraw  [fill=lgr,draw=black](0,1.5)--(-.5*.7,1.5-.5*.7)--(.5-.5*.7,-.5*.7+1.5)--(.5,1.5)--(0,1.5);
\draw (.45-.5*.7,1.68-.5*.7) node{\emph{0}}; 


 \filldraw  [fill=lgr,draw=black] (-.5*.7,1.5-.5*.7)--(.5-.5*.7,-.5*.7+1.5)--(.5-1*.7,-1*.7+1.5)--(-1*.7,-1*.7+1.5)--(-.5*.7,1.5-.5*.7);
 \filldraw [fill=dgr,draw=black] (.5-.5*.7,-.5*.7+1.5)--(.5-1*.7,-1*.7+1.5)--(.5-1*.7,-1*.7+1)--(.5-.5*.7,-.5*.7+1)--(.5-.5*.7,-.5*.7+1.5);
 \filldraw [fill=gr,draw=black]  (-1*.7,-1*.7+1.5)rectangle(.5-1*.7,-1*.7+1);
\draw (.45-1*.7,1.68-1*.7) node{\emph{3}};

 \filldraw [fill=lgr,draw=black] (-1*.7,-1*.7+1)--(.5-1*.7,-1*.7+1)--(.5-1.5*.7,-1.5*.7+1)--(-1.5*.7,-1.5*.7+1)--(-1*.7,-1*.7+1);
 \filldraw  [fill=dgr,draw=black] (.5-1*.7,-1*.7+1)--(.5-1.5*.7,-1.5*.7+1)--(.5-1.5*.7,-1.5*.7+.5)--(.5-1*.7,-1*.7+.5)-- (.5-1*.7,-1*.7+1);   
 \filldraw [fill=gr,draw=black]   (-1.5*.7,1-1.5*.7)rectangle (.5-1.5*.7,.5-1.5*.7)
           (.5-1.5*.7,.5-1.5*.7)rectangle ( -1.5*.7,-1.5*.7) ;
\draw  (.45-1.5*.7,1.18-1.5*.7) node{\emph{6}};

 \filldraw [fill=lgr,draw=black] (.5-1.5*.7,.5-1.5*.7)-- (.5-1*.7,.5-1*.7)-- (1-1*.7,.5-1*.7)-- (1-1.5*.7,.5-1.5*.7)-- (.5-1.5*.7,.5-1.5*.7);
 \filldraw  [fill=dgr,draw=black]  (1-1*.7,.5-1*.7)-- (1-1.5*.7,.5-1.5*.7)--(1-1.5*.7,-1.5*.7)--(1-1*.7,-1*.7)--(1-1*.7,.5-1*.7);
 \filldraw [fill=gr,draw=black]  (.5-1.5*.7,-1.5*.7)rectangle(1-1.5*.7,.5-1.5*.7);
\draw  (.95-1.5*.7,.67-1.5*.7) node{\emph{7}};

\filldraw  [fill=lgr,draw=black](.5-1*.7,1-1*.7)-- (.5-.5*.7,1-.5*.7)-- (1-.5*.7,1-.5*.7)-- (1-1*.7,1-1*.7)-- (.5-1*.7,1-1*.7);
\filldraw [fill=dgr,draw=black]  (1-.5*.7,1-.5*.7)--(1-1*.7,1-1*.7)--(1-1*.7,.5-1*.7)--(1-.5*.7,.5-.5*.7)--(1-.5*.7,1-.5*.7);
\filldraw [fill=gr,draw=black]  (1-1*.7,1-1*.7)rectangle(.5-1*.7,.5-1*.7);
\draw (.95-1*.7,1.18-1*.7) node{\emph{4}};

\filldraw  [fill=lgr,draw=black](1-1*.7,.5-1*.7)-- (1-.5*.7,.5-.5*.7)-- (1.5-.5*.7,.5-.5*.7)-- (1.5-1*.7,.5-1*.7)-- (1-1*.7,.5-1*.7);
\filldraw [fill=dgr,draw=black]  (1.5-.5*.7,.5-.5*.7)--(1.5-1*.7,.5-1*.7)--(1.5-1*.7,-1*.7)--(1.5-.5*.7,-.5*.7)--(1.5-.5*.7,.5-.5*.7);
\filldraw [fill=gr,draw=black]  (1.5-1*.7,.5-1*.7)rectangle(1-1*.7,-1*.7);
\draw (1.45-1*.7,.68-1*.7) node{\emph{5}};
 
 \filldraw [fill=lgr,draw=black] (.5,1.5)--(.5-.5*.7,1.5-.5*.7)--(1-.5*.7,1.5-.5*.7)--(1,1.5)--(.5,1.5);
\filldraw  [fill=dgr,draw=black] (1,1.5)--(1-.5*.7,1.5-.5*.7)--(1-.5*.7,1-.5*.7)--(1,1)--(1,1.5);
\filldraw [fill=gr,draw=black] (.5-.5*.7,1.5-.5*.7)rectangle(1-.5*.7,1-.5*.7);
\draw (.95-.5*.7,1.68-.5*.7) node{\emph{1}};

 \filldraw [fill=lgr,draw=black]  (1,1)--(1.5,1)--(1.5-.5*.7,1-.5*.7)--(1-.5*.7,1-.5*.7)--(1,1);
 \filldraw  [fill=dgr,draw=black] (1.5,1)--(1.5,.5)--(1.5-.5*.7,.5-.5*.7)--(1.5-.5*.7,1-.5*.7)--(1.5,1);
 \filldraw [fill=gr,draw=black] (1-.5*.7,.5-.5*.7)rectangle(1.5-.5*.7,1-.5*.7);
\draw (1.45-.5*.7,1.18-.5*.7) node{\emph{2}};

 \filldraw [fill=lgr,draw=black]  (1.5,.5)--(2,.5)--(2-.5*.7,.5-.5*.7)--(1.5-.5*.7,.5-.5*.7)--(1.5,.5);
 \filldraw  [fill=dgr,draw=black] (2,.5)--(2,0)--(2-.5*.7,-.5*.7)--(2-.5*.7,.5-.5*.7)--(2,.5);
 \filldraw  [fill=gr,draw=black] (1.5-.5*.7,.5-.5*.7)rectangle(2-.5*.7,-.5*.7);
\draw (1.95-.5*.7,.68-.5*.7) node{\emph{3}};
 
\end{tikzpicture}
}
\end{picture}
}

We can rearrange the boxes in a way so that all the \col{colors respectively }degrees only sit over the allowed spots dedicated by staircases. In every level the area should be filled out as large as possible and take the maximal height in this level, however the rows and columns should still decrease. In this process the boxes can be cut horizontally in every rational proportion, so the levels can have any positive rational height. 
In our example we get the following stack:

\col{
\begin{picture}(10,3.8)
\put(2,0){
\begin{tikzpicture}
 \draw(0,0)--(6,0);
 \draw(0,0)--(0,2.2);
 \draw(0,0)--(-2*.7,-2*.7);

 
\filldraw  [fill=plum,draw=black](0,1.5)--(-.5*.7,1.5-.5*.7)--(.5-.5*.7,-.5*.7+1.5)--(.5,1.5)--(0,1.5);
\filldraw  [fill=plum,draw=black](-.5*.7,1.5-.5*.7)rectangle(.5-.5*.7,1-.5*.7);
 
\filldraw [fill=blueberry,draw=black] (.5,1.5)--(.5-.5*.7,1.5-.5*.7)--(1-.5*.7,1.5-.5*.7)--(1,1.5)--(.5,1.5);
\filldraw [fill=blueberry,draw=black] (1,1.5)--(1-.5*.7,1.5-.5*.7)--(1-.5*.7,1-.5*.7)--(1,1)--(1,1.5);
\filldraw [fill=blueberry,draw=black] (.5-.5*.7,1.5-.5*.7)rectangle(1-.5*.7,1-.5*.7);
\filldraw [fill=blueberry,draw=black] (.5-.5*.7,1-.5*.7)rectangle(1-.5*.7,.5-.5*.7);

\filldraw [fill=eggplant,draw=black]  (1,1)--(1.5,1)--(1.5-.5*.7,1-.5*.7)--(1-.5*.7,1-.5*.7)--(1,1);
\filldraw [fill=eggplant,draw=black] (1.5,1)--(1.5,.5)--(1.5-.5*.7,.5-.5*.7)--(1.5-.5*.7,1-.5*.7)--(1.5,1);
\filldraw [fill=eggplant,draw=black] (1-.5*.7,.5-.5*.7)rectangle(1.5-.5*.7,1-.5*.7);
\filldraw [fill=eggplant,draw=black] (1-.5*.7,-.5*.7)rectangle(1.5-.5*.7,.5-.5*.7);

\filldraw [fill=cranberry,draw=black]  (1.5,1)--(2,1)--(2-.5*.7,1-.5*.7)--(1.5-.5*.7,1-.5*.7)--(1.5,1);
\filldraw [fill=cranberry,draw=black] (2,1)--(2,.5)--(2-.5*.7,.5-.5*.7)--(2-.5*.7,1-.5*.7)--(2,1);
\filldraw [fill=cranberry,draw=black] (2,.5)--(2,0)--(2-.5*.7,-.5*.7)--(2-.5*.7,.5-.5*.7)--(2,.5);
\filldraw [fill=cranberry,draw=black] (1.5-.5*.7,1-.5*.7)rectangle(2-.5*.7,.5-.5*.7);
\filldraw [fill=cranberry,draw=black] (1.5-.5*.7,.5-.5*.7)rectangle(2-.5*.7,-.5*.7);

\filldraw [fill=beans,draw=black]  (2,.5)--(2.5,.5)--(2.5-.5*.7,.5-.5*.7)--(2-.5*.7,.5-.5*.7)--(2,.5);
\filldraw [fill=beans,draw=black] (2.5,.5)--(2.5,0)--(2.5-.5*.7,-.5*.7)--(2.5-.5*.7,.5-.5*.7)--(2.5,.5);
\filldraw [fill=beans,draw=black] (2-.5*.7,.5-.5*.7)rectangle(2.5-.5*.7,-.5*.7);

\filldraw [fill=apricot,draw=black]  (2.5,.25)--(3,.25)--(3-.5*.7,.25-.5*.7)--(2.5-.5*.7,.25-.5*.7)--(2.5,.25);
\filldraw  [fill=apricot,draw=black] (3,.25)--(3,0)--(3-.5*.7,-.5*.7)--(3-.5*.7,.25-.5*.7)--(3,.25);
\filldraw  [fill=apricot,draw=black] (2.5-.5*.7,.25-.5*.7)rectangle(3-.5*.7,-.5*.7);

\filldraw [fill=corn,draw=black]  (3,.25)--(3.5,.25)--(3.5-.5*.7,.25-.5*.7)--(3-.5*.7,.25-.5*.7)--(3,.25);
\filldraw  [fill=corn,draw=black] (3.5,.25)--(3.5,0)--(3.5-.5*.7,-.5*.7)--(3.5-.5*.7,.25-.5*.7)--(3.5,.25);
\filldraw  [fill=corn,draw=black] (3-.5*.7,.25-.5*.7)rectangle(3.5-.5*.7,-.5*.7);

\filldraw [fill=llemon,draw=black]  (3.5,.33*.5)--(4,.33*.5)--(4-.5*.7,.33*.5-.5*.7)--(3.5-.5*.7,.33*.5-.5*.7)--(3.5,.33*.5);
\filldraw  [fill=llemon,draw=black] (4,.33*.5)--(4,0)--(4-.5*.7,-.5*.7)--(4-.5*.7,.33*.5-.5*.7)--(4,.33*.5);
\filldraw  [fill=llemon,draw=black] (3.5-.5*.7,.33*.5-.5*.7)rectangle(4-.5*.7,-.5*.7);

\draw [densely dotted](3.5-.5*.7,.33*.5-.5*.7)--(2-.5*.7,.33*.5-.5*.7) (2.5-.5*.7,.25-.5*.7)--(2-.5*.7,.25-.5*.7);

\filldraw  [fill=cranberry,draw=black] (-.5*.7,1-.5*.7)--(.5-.5*.7,1-.5*.7)--(.5-1*.7,1-1*.7)--(-1*.7,1-1*.7)--(-.5*.7,1-.5*.7);
\filldraw[fill=cranberry,draw=black](.5-.5*.7,1-.5*.7)--(.5-1*.7,1-1*.7)--(.5-1*.7,.5-1*.7)--(.5-.5*.7,.5-.5*.7)--(.5-.5*.7,1-.5*.7);
\filldraw  [fill=cranberry,draw=black] (-1*.7,1-1*.7)rectangle(.5-1*.7,.5-1*.7);
\filldraw  [fill=cranberry,draw=black] (.5-.5*.7,.5-.5*.7)--(.5-1*.7,.5-1*.7)--(.5-1*.7,-1*.7)--(.5-.5*.7,-.5*.7)--(.5-.5*.7,.5-.5*.7);
\filldraw  [fill=cranberry,draw=black](-1*.7,.5-1*.7)rectangle(.5-1*.7,-1*.7);

\filldraw  [fill=beans,draw=black](.5-1*.7,.5-1*.7)-- (.5-.5*.7,.5-.5*.7)-- (1-.5*.7,.5-.5*.7)-- (1-1*.7,.5-1*.7)-- (.5-1*.7,.5-1*.7);
\filldraw [fill=beans,draw=black]  (1-.5*.7,.5-.5*.7)--(1-1*.7,.5-1*.7)--(1-1*.7,-1*.7)--(1-.5*.7,-.5*.7)--(1-.5*.7,.5-.5*.7);
\filldraw [fill=beans,draw=black]  (1-1*.7,.5-1*.7)rectangle(.5-1*.7,-1*.7);

\filldraw  [fill=apricot,draw=black](1-1*.7,.25-1*.7)-- (1-.5*.7,.25-.5*.7)-- (1.5-.5*.7,.25-.5*.7)-- (1.5-1*.7,.25-1*.7)-- (1-1*.7,.25-1*.7);
\filldraw [fill=apricot,draw=black]  (1.5-.5*.7,.25-.5*.7)--(1.5-1*.7,.25-1*.7)--(1.5-1*.7,-1*.7)--(1.5-.5*.7,-.5*.7)--(1.5-.5*.7,.25-.5*.7);
\filldraw [fill=apricot,draw=black]  (1.5-1*.7,.25-1*.7)rectangle(1-1*.7,-1*.7);

\filldraw  [fill=corn,draw=black](1.5-1*.7,.25-1*.7)-- (1.5-.5*.7,.25-.5*.7)-- (2-.5*.7,.25-.5*.7)-- (2-1*.7,.25-1*.7)-- (1.5-1*.7,.25-1*.7);
\filldraw [fill=corn,draw=black]  (2-.5*.7,.25-.5*.7)--(2-1*.7,.25-1*.7)--(2-1*.7,-1*.7)--(2-.5*.7,-.5*.7)--(2-.5*.7,.25-.5*.7);
\filldraw [fill=corn,draw=black]  (2-1*.7,.25-1*.7)rectangle(1.5-1*.7,-1*.7);

\filldraw  [fill=llemon,draw=black](2-1*.7,.66*.25-1*.7)-- (2-.5*.7,.66*.25-.5*.7)-- (2.5-.5*.7,.66*.25-.5*.7)-- (2.5-1*.7,.66*.25-1*.7)-- (2-1*.7,.66*.25-1*.7);
\filldraw [fill=llemon,draw=black]  (2.5-.5*.7,.66*.25-.5*.7)--(2.5-1*.7,.66*.25-1*.7)--(2.5-1*.7,-1*.7)--(2.5-.5*.7,-.5*.7)--(2.5-.5*.7,.66*.25-.5*.7);
\filldraw [fill=llemon,draw=black]  (2.5-1*.7,.66*.25-1*.7)rectangle(2-1*.7,-1*.7);

\draw [densely dotted] (2-1*.7,.33*.5-1*.7)--(1-1*.7,.33*.5-1*.7) (1-1*.7,.25-1*.7)--(.5-1*.7,.25-1*.7) ;

\filldraw [fill=corn,draw=black] (-1*.7,.5-1*.7)--(.5-1*.7,.5-1*.7)--(.5-1.5*.7,.5-1.5*.7)--(-1.5*.7,.5-1.5*.7)--(-1*.7,.5-1*.7);
\filldraw  [fill=corn,draw=black] (.5-1*.7,.5-1*.7)--(.5-1.5*.7,.5-1.5*.7)--(.5-1.5*.7,-1.5*.7)--(.5-1*.7,-1*.7)-- (.5-1*.7,.5-1*.7);   
\filldraw [fill=corn,draw=black] (.5-1.5*.7,.5-1.5*.7)rectangle ( -1.5*.7,-1.5*.7) ;

\filldraw [fill=llemon,draw=black] (.5-1.5*.7,.33*.5-1.5*.7)-- (.5-1*.7,.33*.5-1*.7)-- (1-1*.7,.33*.5-1*.7)-- (1-1.5*.7,.33*.5-1.5*.7)-- (.5-1.5*.7,.33*.5-1.5*.7);
\filldraw  [fill=llemon,draw=black]  (1-1*.7,.33*.5-1*.7)-- (1-1.5*.7,.33*.5-1.5*.7)--(1-1.5*.7,-1.5*.7)--(1-1*.7,-1*.7)--(1-1*.7,.33*.5-1*.7);
\filldraw [fill=llemon,draw=black]  (.5-1.5*.7,-1.5*.7)rectangle(1-1.5*.7,.33*.5-1.5*.7);

\draw [densely dotted] (-1.5*.7,.33*.5-1.5*.7)--(.5-1.5*.7,.33*.5-1.5*.7) (-1.5*.7,.25-1.5*.7)--(.5-1.5*.7,.25-1.5*.7) (.5-1*.7,.25-1*.7)--(.5-1.5*.7,.25-1.5*.7);

\draw [dotted] (4,.33*.5)--(6,.33*.5) (3.5,.25)--(6,.25) (2.5,.5)--(6,.5) (1.5,1)--(6,1) (1,1.5)--(6,1.5);
\draw [dotted] (-1.5*.7,.33*.5-1.5*.7)--(-2*.7,.33*.5-2*.7) (-1.5*.7,.25-1.5*.7)--(-2*.7,.25-2*.7) (-1.5*.7,.5-1.5*.7)--(-2*.7,.5-2*.7) (-1*.7,1-1*.7)--(-2*.7,1-2*.7) (-.5*.7,1.5-.5*.7)--(-2*.7,1.5-2*.7);
\draw (6.1,0.1) node{\tiny{$\frac{1}{3}$}} (6.25,0.2) node{\tiny{$\frac{1}{6}$}} (6.1,0.4) node{\tiny{$\frac{1}{2}$}} (6.1,.75) node{\small{1}} (6.1,1.25) node{\small{1}};
\end{tikzpicture}
}
\end{picture}
}

\bw{
\begin{picture}(10,3.8)
\put(2,0){
\begin{tikzpicture}
 \draw(0,0)--(6,0);
 \draw(0,0)--(0,2.2);
 \draw(0,0)--(-2*.7,-2*.7);

 
\filldraw  [fill=lgr,draw=black](0,1.5)--(-.5*.7,1.5-.5*.7)--(.5-.5*.7,-.5*.7+1.5)--(.5,1.5)--(0,1.5);
\filldraw  [fill=gr,draw=black] (-.5*.7,1.5-.5*.7)rectangle(.5-.5*.7,1-.5*.7);
\draw (.45-.5*.7,1.68-.5*.7) node{\emph{0}}; 
 
\filldraw [fill=lgr,draw=black] (.5,1.5)--(.5-.5*.7,1.5-.5*.7)--(1-.5*.7,1.5-.5*.7)--(1,1.5)--(.5,1.5);
\filldraw [fill=dgr,draw=black] (1,1.5)--(1-.5*.7,1.5-.5*.7)--(1-.5*.7,1-.5*.7)--(1,1)--(1,1.5);
\filldraw [fill=gr,draw=black] (.5-.5*.7,1.5-.5*.7)rectangle(1-.5*.7,1-.5*.7);
\filldraw [fill=gr,draw=black] (.5-.5*.7,1-.5*.7)rectangle(1-.5*.7,.5-.5*.7);
\draw (.95-.5*.7,1.68-.5*.7) node{\emph{1}};

\filldraw [fill=lgr,draw=black]  (1,1)--(1.5,1)--(1.5-.5*.7,1-.5*.7)--(1-.5*.7,1-.5*.7)--(1,1);
\filldraw [fill=dgr,draw=black] (1.5,1)--(1.5,.5)--(1.5-.5*.7,.5-.5*.7)--(1.5-.5*.7,1-.5*.7)--(1.5,1);
\filldraw [fill=gr,draw=black] (1-.5*.7,.5-.5*.7)rectangle(1.5-.5*.7,1-.5*.7);
\filldraw [fill=gr,draw=black] (1-.5*.7,-.5*.7)rectangle(1.5-.5*.7,.5-.5*.7);
\draw (1.45-.5*.7,1.18-.5*.7) node{\emph{2}};

\filldraw [fill=lgr,draw=black]  (1.5,1)--(2,1)--(2-.5*.7,1-.5*.7)--(1.5-.5*.7,1-.5*.7)--(1.5,1);
\filldraw [fill=dgr,draw=black] (2,1)--(2,.5)--(2-.5*.7,.5-.5*.7)--(2-.5*.7,1-.5*.7)--(2,1);
\filldraw [fill=dgr,draw=black] (2,.5)--(2,0)--(2-.5*.7,-.5*.7)--(2-.5*.7,.5-.5*.7)--(2,.5);
\filldraw [fill=gr,draw=black] (1.5-.5*.7,1-.5*.7)rectangle(2-.5*.7,.5-.5*.7);
\filldraw [fill=gr,draw=black] (1.5-.5*.7,.5-.5*.7)rectangle(2-.5*.7,-.5*.7);
\draw (1.95-.5*.7,1.18-.5*.7) node{\emph{3}};

\filldraw [fill=lgr,draw=black]  (2,.5)--(2.5,.5)--(2.5-.5*.7,.5-.5*.7)--(2-.5*.7,.5-.5*.7)--(2,.5);
\filldraw [fill=dgr,draw=black] (2.5,.5)--(2.5,0)--(2.5-.5*.7,-.5*.7)--(2.5-.5*.7,.5-.5*.7)--(2.5,.5);
\filldraw [fill=gr,draw=black] (2-.5*.7,.5-.5*.7)rectangle(2.5-.5*.7,-.5*.7);
\draw (2.45-.5*.7,.68-.5*.7) node{\emph{4}};

\filldraw [fill=lgr,draw=black]  (2.5,.25)--(3,.25)--(3-.5*.7,.25-.5*.7)--(2.5-.5*.7,.25-.5*.7)--(2.5,.25);
\filldraw  [fill=gr,draw=black] (2.5-.5*.7,.25-.5*.7)rectangle(3-.5*.7,-.5*.7);
\draw (2.95-.5*.7,.43-.5*.7) node{\emph{5}};

\filldraw [fill=lgr,draw=black]  (3,.25)--(3.5,.25)--(3.5-.5*.7,.25-.5*.7)--(3-.5*.7,.25-.5*.7)--(3,.25);
\filldraw  [fill=dgr,draw=black] (3.5,.25)--(3.5,0)--(3.5-.5*.7,-.5*.7)--(3.5-.5*.7,.25-.5*.7)--(3.5,.25);
\filldraw  [fill=gr,draw=black] (3-.5*.7,.25-.5*.7)rectangle(3.5-.5*.7,-.5*.7);
\draw (3.45-.5*.7,.43-.5*.7) node{\emph{6}};

\filldraw [fill=lgr,draw=black]  (3.5,.33*.5)--(4,.33*.5)--(4-.5*.7,.33*.5-.5*.7)--(3.5-.5*.7,.33*.5-.5*.7)--(3.5,.33*.5);
\filldraw  [fill=dgr,draw=black] (4,.33*.5)--(4,0)--(4-.5*.7,-.5*.7)--(4-.5*.7,.33*.5-.5*.7)--(4,.33*.5);
\filldraw  [fill=gr,draw=black] (3.5-.5*.7,.33*.5-.5*.7)rectangle(4-.5*.7,-.5*.7);
\draw (3.95-.5*.7,.33-.5*.7) node{\emph{7}};

\draw [densely dotted](3.5-.5*.7,.33*.5-.5*.7)--(2-.5*.7,.33*.5-.5*.7) (2.5-.5*.7,.25-.5*.7)--(2-.5*.7,.25-.5*.7);

\filldraw  [fill=lgr,draw=black] (-.5*.7,1-.5*.7)--(.5-.5*.7,1-.5*.7)--(.5-1*.7,1-1*.7)--(-1*.7,1-1*.7)--(-.5*.7,1-.5*.7);
\filldraw [fill=dgr,draw=black] (.5-.5*.7,1-.5*.7)--(.5-1*.7,1-1*.7)--(.5-1*.7,.5-1*.7)--(.5-.5*.7,.5-.5*.7)--(.5-.5*.7,1-.5*.7);
\filldraw  [fill=gr,draw=black] (-1*.7,1-1*.7)rectangle(.5-1*.7,.5-1*.7);
\draw (.45-1*.7,1.18-1*.7) node{\emph{3}};

\filldraw  [fill=lgr,draw=black](.5-1*.7,.5-1*.7)-- (.5-.5*.7,.5-.5*.7)-- (1-.5*.7,.5-.5*.7)-- (1-1*.7,.5-1*.7)-- (.5-1*.7,.5-1*.7);
\filldraw [fill=dgr,draw=black]  (1-.5*.7,.5-.5*.7)--(1-1*.7,.5-1*.7)--(1-1*.7,-1*.7)--(1-.5*.7,-.5*.7)--(1-.5*.7,.5-.5*.7);
\filldraw [fill=gr,draw=black]  (1-1*.7,.5-1*.7)rectangle(.5-1*.7,-1*.7);
\draw (.95-1*.7,.68-1*.7) node{\emph{4}};

\filldraw  [fill=lgr,draw=black](1-1*.7,.25-1*.7)-- (1-.5*.7,.25-.5*.7)-- (1.5-.5*.7,.25-.5*.7)-- (1.5-1*.7,.25-1*.7)-- (1-1*.7,.25-1*.7);
\filldraw [fill=dgr,draw=black]  (1.5-.5*.7,.25-.5*.7)--(1.5-1*.7,.25-1*.7)--(1.5-1*.7,-1*.7)--(1.5-.5*.7,-.5*.7)--(1.5-.5*.7,.25-.5*.7);
\filldraw [fill=gr,draw=black]  (1.5-1*.7,.25-1*.7)rectangle(1-1*.7,-1*.7);
\draw (1.45-1*.7,.43-1*.7) node{\emph{5}};

\filldraw  [fill=lgr,draw=black](1.5-1*.7,.25-1*.7)-- (1.5-.5*.7,.25-.5*.7)-- (2-.5*.7,.25-.5*.7)-- (2-1*.7,.25-1*.7)-- (1.5-1*.7,.25-1*.7);
\filldraw [fill=dgr,draw=black]  (2-.5*.7,.25-.5*.7)--(2-1*.7,.25-1*.7)--(2-1*.7,-1*.7)--(2-.5*.7,-.5*.7)--(2-.5*.7,.25-.5*.7);
\filldraw [fill=gr,draw=black]  (2-1*.7,.25-1*.7)rectangle(1.5-1*.7,-1*.7);
\draw (1.95-1*.7,.43-1*.7) node{\emph{6}};

\filldraw  [fill=lgr,draw=black](2-1*.7,.66*.25-1*.7)-- (2-.5*.7,.66*.25-.5*.7)-- (2.5-.5*.7,.66*.25-.5*.7)-- (2.5-1*.7,.66*.25-1*.7)-- (2-1*.7,.66*.25-1*.7);
\filldraw [fill=dgr,draw=black]  (2.5-.5*.7,.66*.25-.5*.7)--(2.5-1*.7,.66*.25-1*.7)--(2.5-1*.7,-1*.7)--(2.5-.5*.7,-.5*.7)--(2.5-.5*.7,.66*.25-.5*.7);
\filldraw [fill=gr,draw=black]  (2.5-1*.7,.66*.25-1*.7)rectangle(2-1*.7,-1*.7);
\draw (2.45-1*.7,.33-1*.7) node{\emph{7}};

\draw [densely dotted] (2-1*.7,.33*.5-1*.7)--(1-1*.7,.33*.5-1*.7) (1-1*.7,.25-1*.7)--(.5-1*.7,.25-1*.7) ;

\filldraw [fill=lgr,draw=black] (-1*.7,.5-1*.7)--(.5-1*.7,.5-1*.7)--(.5-1.5*.7,.5-1.5*.7)--(-1.5*.7,.5-1.5*.7)--(-1*.7,.5-1*.7);
\filldraw  [fill=dgr,draw=black] (.5-1*.7,.5-1*.7)--(.5-1.5*.7,.5-1.5*.7)--(.5-1.5*.7,-1.5*.7)--(.5-1*.7,-1*.7)-- (.5-1*.7,.5-1*.7);   
\filldraw [fill=gr,draw=black] (.5-1.5*.7,.5-1.5*.7)rectangle ( -1.5*.7,-1.5*.7) ;
\draw (.45-1.5*.7,.68-1.5*.7) node{\emph{6}};

\filldraw [fill=lgr,draw=black] (.5-1.5*.7,.33*.5-1.5*.7)-- (.5-1*.7,.33*.5-1*.7)-- (1-1*.7,.33*.5-1*.7)-- (1-1.5*.7,.33*.5-1.5*.7)-- (.5-1.5*.7,.33*.5-1.5*.7);
\filldraw  [fill=dgr,draw=black]  (1-1*.7,.33*.5-1*.7)-- (1-1.5*.7,.33*.5-1.5*.7)--(1-1.5*.7,-1.5*.7)--(1-1*.7,-1*.7)--(1-1*.7,.33*.5-1*.7);
\filldraw [fill=gr,draw=black]  (.5-1.5*.7,-1.5*.7)rectangle(1-1.5*.7,.33*.5-1.5*.7);
\draw (.95-1.5*.7,.33-1.5*.7) node{\emph{7}};

\draw [densely dotted] (-1.5*.7,.33*.5-1.5*.7)--(.5-1.5*.7,.33*.5-1.5*.7) (-1.5*.7,.25-1.5*.7)--(.5-1.5*.7,.25-1.5*.7) (.5-1*.7,.25-1*.7)--(.5-1.5*.7,.25-1.5*.7);

\draw [dotted] (4,.33*.5)--(6,.33*.5) (3.5,.25)--(6,.25) (2.5,.5)--(6,.5) (1.5,1)--(6,1) (1,1.5)--(6,1.5);
\draw [dotted] (-1.5*.7,.33*.5-1.5*.7)--(-2*.7,.33*.5-2*.7) (-1.5*.7,.25-1.5*.7)--(-2*.7,.25-2*.7) (-1.5*.7,.5-1.5*.7)--(-2*.7,.5-2*.7) (-1*.7,1-1*.7)--(-2*.7,1-2*.7) (-.5*.7,1.5-.5*.7)--(-2*.7,1.5-2*.7);
\draw (6.1,0.1) node{\scalebox{.5}{$\frac{1}{3}$}} (6.25,0.2) node{\scalebox{.5}{$\frac{1}{6}$}} (6.1,0.4) node{\scalebox{.5}{$\frac{1}{2}$}} (6.1,.75) node{\small{1}} (6.1,1.25) node{\small{1}};
\end{tikzpicture}
}
\end{picture}
}

Running the algorithm coming up next we get the following decomposition:
$$h\;=\;\frac{1}{3} \cdot s^7 \;+\; \frac{1}{6} \cdot s^6 \;+ \;\frac{1}{2}(t^6 * s^3)\; + \;s^3 \;+\; s^1$$
\end{exa}

\pagebreak
The largest area that can be filled out up to a fixed degree is given by the staircase of the maximal h-vector of the same degree. If that does not fit, the maximal h-vector of lower degree must be combined with a tower on the left and so on.
 So we get layers of staircases of extremal points in a total order.
That is what the algorithm does. 
An explicit description  of the algorithm as a flowchart can be found in the appendix.

We now follow the algorithm and prove the accuracy of theorem (\ref{extr}) in several steps.
\\

 Starting with any element $h=(h_0,\dots,h_d)\in \mathbb{Q}_{\geq0}^{d+1}$ we want to decide whether it is in the cone \Hc(d) and if so we want to give a positive rational decomposition in extremal points.

First we subtract a rational multiple of the maximal h-vectors $s^d$ as large as possible, so the rest stays non-negative. We continue by lowering the degree in every loop until we end up in $h=(0)$ or we get a trivial entry in a coordinate with index smaller than the degree, but $h_d >0$.
Let's call this a \textit{reduced h-vector}.

\begin{lemma}\label{lem1}
If $g \in$ \Hc(d) then $h=g-q \cdot s^d$ with $q \leq \underset{i \in \{0,\cdots,d\}}{\min}\{\frac{g_i}{s_i}\}$ is still in \Hc.
\end{lemma}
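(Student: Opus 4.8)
The plan is to use Proposition~\ref{hcone}: since $g\in\mathbb{H}(d)$ it carries an h-diagram $(g_i^j)$ satisfying conditions~(1)--(2), and to prove $h\in\mathbb{H}$ it suffices, again by Proposition~\ref{hcone}, to exhibit an h-diagram for $h=g-q\cdot s^d$ (note $h\in\mathbb{Q}^{d+1}$, so $h\in\mathbb{H}\iff h\in\mathbb{H}(d)$). First I would make two reductions. It is enough to treat $q=q_0:=\min_i g_i/s_i$, because for $0\le q\le q_0$ one has $h=(g-q_0 s^d)+(q_0-q)s^d$, and $s^d\in\mathbb{H}$ together with closedness of the cone under positive combinations reduces this case to the extreme one. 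And if some $g_i=0$ then $q_0=0$ and $h=g$, so one may assume $g_i>0$ for all $i\le d$ and $q_0>0$; then $h_i=g_i-q_0 s_i\ge 0$ for every $i$, with equality for the minimizing index. One also records that $s^d$ has the ``maximal'' h-diagram $(s^d)_i^j=1$ for $1\le j\le s_i$: the all-ones filling of the maximal staircase satisfies (1)--(2), which is immediate from the index ranges in Proposition~\ref{hcone}.

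With these reductions the claim becomes: build an h-diagram $(h_i^j)$ for $h$. I would do this greedily, exactly as in the decomposition algorithm: sweep the degrees $i=0,1,\dots,d$ in increasing order, and in degree $i$ assign heights to the rows $1,\dots,s_i$ from the bottom up, making each $h_i^j$ as large as the constraints (1)--(2) with the already-chosen cells and the equality $\sum_{j}h_i^j=h_i$ allow. Concretely this forces row $1$ to be $\big(\min_{k\le i}h_k\big)_i$ (the only upper bounds on $h_i^1$ are $h_i^1\le h_{i-1}^1$ and $h_i^1\le h_i$), and it turns the rows $\ge 2$ into a smaller, $n$-shifted instance of the same kind of problem, bounded above through condition~(2) by row~$1$. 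Granting that this procedure terminates, the output $(h_i^j)$ satisfies (1)--(2) and $h_i^j\ge 0$ by construction and has the correct row-sums, so $h\in\mathbb{H}(d)$; equivalently, this amounts to ``removing a uniform height-$q_0$ slab over the maximal staircase'' from a suitably chosen h-diagram of $g$, the uniform subtraction preserving (1)--(2) precisely because, over the ranges where (1) resp.\ (2) is imposed, both compared cells lie under the maximal staircase.

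The main obstacle — and really the only content of the lemma — is showing the greedy construction never stalls, i.e.\ that at each stage the residual handed to the higher rows is non-negative and still completable. This is where $g\in\mathbb{H}(d)$ must be used and not merely $h\ge 0$ (not every non-negative vector lies in $\mathbb{H}(d)$). I would argue by comparing the greedily built diagram for $h$ with a fixed h-diagram $(g_i^j)$ of $g$, degree by degree: the hypothesis $g_i\ge q_0 s_i$ for every $i$ is exactly the bookkeeping needed to see that whatever the greedy for $h$ is required to place in the rows $\ge 2$ is no more than what $(g_i^j)$, minus a height-$q_0$ layer over the maximal staircase, already supplies, so that conditions (1)--(2) for $h$ can always be met. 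Once this comparison is carried out the remaining verifications are routine. (An alternative is induction on $d$ via the truncation $\mathbb{H}(d)\to\mathbb{H}(d-1)$, which sends $s^d$ to $s^{d-1}$; but that only moves the difficulty to realizing the prescribed top coordinate $h_d$, which again rests on a Macaulay-type estimate for $s^d$ of the same flavor.)
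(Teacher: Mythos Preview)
Your parenthetical remark---that subtracting a uniform height $q$ over the maximal staircase from an h-diagram of $g$ preserves conditions (1) and (2) of Proposition~\ref{hcone}---is the whole proof, and it is exactly what the paper does. Each of (1) and (2) is an inequality between two cells that both lie under $s^d$, so both sides drop by $q$ and the inequality survives; the only issue is arranging that every cell satisfies $g_i^j\ge q$ before subtracting, so that $h_i^j:=g_i^j-q\ge 0$.

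The paper handles this directly: start from \emph{any} h-diagram of $g$ and rearrange it, degree by degree, so that every cell is at least $q$. This is possible precisely because $g_i\ge q\,s_i$ gives enough total mass in degree $i$ to allot $q$ to each of the $s_i$ cells; concretely, begin at degree $d$ with $\bar g_d^j=q$ for $j<s_d$ and $\bar g_d^{s_d}=g_d-q(s_d-1)$, then in lower degrees shift mass among cells of the same degree to top up any deficit while maintaining (1)--(2). Your greedy construction of an h-diagram for $h$ from scratch is a detour around this: the step you yourself flag as the ``main obstacle''---showing the greedy never stalls via a comparison with $(g_i^j)$---is never actually carried out, and it is not automatic, since the greedy diagram for $h$ need not coincide with any particular diagram of $g$ minus $q$. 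The paper sidesteps the difficulty by working on $g$'s side, where the rearrangement uses only the scalar bounds $g_i\ge q s_i$, and the uniform subtraction afterward is trivial.
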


\begin{proof}
We have a decomposition $g_i= \sum_{j=1}^{s_i} g_i^j$ with properties (1) and (2) of proposition (\ref{hcone}) and as $q\leq \min \{\frac{g_i}{s_i}\}$ we may assume that $g_i^j\geq q$  for all $i,j$.
We can achieve this for example by replacing the decomposition in the highest degree $d$ by $\bar{g}_d^{s_d} =g_d-q(s_d-1)$ and $\bar{g}_d^j=q$ for all $j<s_d$. To fill the gaps $g_i^j<q$ in the lower degrees we take the material from the other spots of same degree starting with the smallest row but never underrunning $q$ and always ensuring that the rows and columns are still decreasing looking upwards from degree $i$. 

Having such a decomposition we set $h_i^j=g_i^j-q$ for $h=(g_0-q, \ldots,g_d- q  s_d)$ . Then the computation $$\sum_{j=1}^{s_i}h_i^j=\sum_{j=1}^{s_i}g_i^j -q \cdot s_i = g_i - q \cdot s_i $$ shows that both conditions in proposition (\ref{hcone}) are fulfilled, hence $h\in\mathbb{H}$.
\end{proof}

\begin{exa}\label{notextr}
Let $h=t^{n\cdot m-1}$ be the tower of degree $n\cdot m-1$. The staircase corresponding to this h-vector is a rectangle of size $n \times (m+1)$. The previous lemma gives the decomposition in extremal points $\sum_{\ell=1}^m q_\ell\cdot s^{n\ell-1}$ with $$q_m=\frac{1}{m} \quad \text{and} \quad q_\ell=\frac{1}{\ell}-\sum_{k=\ell+1}^m \frac{1}{q_k}.$$

\end{exa}

\pagebreak
\begin{lemma}\label{lem3}
There is no reduced h-vector of degree $d=n\!\cdot\! m-1$.
\end{lemma}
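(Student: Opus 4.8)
The plan is to prove the equivalent statement that every $h\in\mathbb{H}(d)$ with $d=nm-1$ and $h_d>0$ automatically has $h_i>0$ for all $0\le i\le d$; such an $h$ then has no zero coordinate below its top degree and so cannot be a reduced h-vector.

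First I would reduce to a single lex-segment quotient. By the description of $\mathbb{H}$, additivity of the Hilbert function, and Theorem \ref{Mac}, any $h\in\mathbb{H}(d)$ can be written as a finite positive rational combination $h=\sum_k q_k\,h_{R/I_k}$ with each $I_k$ a lex-segment ideal; since $h$ vanishes in degrees $>d$ and the summands are non-negative, each $h_{R/I_k}$ also vanishes in degrees $>d$. From $h_d>0$ we get an index $k_0$ with $(R/I_{k_0})_d\ne 0$. Writing $I:=I_{k_0}$, it now suffices to show that this fixed lex-segment quotient satisfies $(R/I)_i\ne0$ for every $i\le d$, because then $h_i\ge q_{k_0}\,(h_{R/I})_i>0$ for all such $i$.

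The key step is to locate a suitable ``corner'' monomial in $R/I$. Unwinding the definition of lex-segment ideal, $I_d$ is an initial segment of the degree-$d$ monomials for $<_{lex}$: whenever a degree-$d$ monomial lies in $I_d$, so does every degree-$d$ monomial with larger $x$-exponent. Hence the monomial $k$-basis of $(R/I)_d=R_d/I_d$ is a nonempty final segment, and in particular it contains the $<_{lex}$-largest monomial of degree $d$. Since $d=nm-1\equiv n-1\pmod n$, the degree-$d$ monomial with the smallest admissible $x$-exponent is $x^{n-1}y^{m-1}$, and this is exactly the $<_{lex}$-maximum; therefore $x^{n-1}y^{m-1}\notin I$.

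Finally I would propagate this to all lower degrees. As $I$ is an ideal, every divisor of $x^{n-1}y^{m-1}$ also lies outside $I$. The divisors are precisely the monomials $x^ay^b$ with $0\le a\le n-1$ and $0\le b\le m-1$, and $\deg(x^ay^b)=a+nb$ runs through every value in $\{0,1,\dots,nm-1\}$ as $(a,b)$ ranges over this $n\times m$ rectangle of exponents. Thus for each $i\le d$ there is a monomial of degree $i$ not in $I$, i.e.\ $(R/I)_i\ne0$, which completes the reduction and proves the lemma. The only delicate point is the middle step --- justifying carefully from the definition of ``lex-segment'' that $I_d$ is a lex-initial segment, so that the corner monomial $x^{n-1}y^{m-1}$ necessarily survives in $R/I$ once $(R/I)_d\ne0$; the rest is elementary bookkeeping with divisors and degrees. (Alternatively, one can stay entirely within Proposition \ref{hcone}: choosing $j_0$ with $h_d^{j_0}>0$, conditions (1) and (2) force a staircase of strictly positive entries $h_i^j$ whose associated degree-intervals, precisely because $d+1=nm$, chain together without gaps to cover $\{0,\dots,d\}$, giving the same conclusion.)
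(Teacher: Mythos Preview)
Your proof is correct and takes a genuinely different route from the paper's. The paper argues entirely within the h-diagram framework of Proposition~\ref{hcone}: assuming a reduced $h$ exists, it takes the maximal index $k<d$ with $h_k=0$, writes $k=nm_k+r_k$, uses column-decreasing to force $h_{n\ell+r_k}=0$ for all $\ell\ge m_k$, and then row-decreasing to obtain the contradiction $0=h_{n(m-1)+r_k}\ge h_{nm-1}=h_d>0$. Your parenthetical alternative is essentially this same argument phrased positively (chaining positive entries forward from $h_d^{j_0}>0$ rather than propagating zeros forward from $h_k=0$).

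Your main argument, by contrast, bypasses the h-diagram entirely and works directly with monomials: you reduce to a single lex-segment quotient $R/I$ with $(R/I)_d\ne0$, observe that $I_d$ being a lex-initial segment forces the corner monomial $x^{n-1}y^{m-1}$ to survive, and then its $n\times m$ rectangle of divisors furnishes a nonzero class in every degree $0,\ldots,nm-1$. This is more concrete and arguably more transparent --- one sees exactly which monomial witnesses each $h_i>0$ --- and it does not presuppose Proposition~\ref{hcone}. The price is the small detour through the decomposition of an arbitrary cone element into lex-segment quotients (which you handle correctly via additivity and Theorem~\ref{Mac}). The paper's approach is shorter and stays inside the combinatorial machinery already set up, but is terse about how the per-row inequalities $h_i^j\ge h_{i+1}^j$ aggregate to the needed inequality $h_{n(m-1)+r_k}\ge h_d$ on the $h_i$ themselves.
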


\begin{proof}
Let $h$ be reduced, thus there exists $k < d$ with $ h_k=0$ but $h_d>0$. \linebreak Let $k=n\!\cdot\! m_k+r_k$ be maximal among those indices fufilling the previous conditions. From the decreasing of the columns in the h-diagram follows $ h_{n\ell+r_k}=0 $ \, for all $\ell \geq m_k $ and from the decreasing of the rows follows the contradiction \linebreak $0=h_{n(m-1)+r_k}\geq h_{nm-1}>0$. Therefore no such $k$ can exist.
\end{proof}


The central part of the proof of theorem (\ref{extr}) is given by the following lemma.

\begin{lemma}\label{lem2}
Every element of the cone \Hc(d) decomposes into a positive rational linear combination of extremal points.
\end{lemma}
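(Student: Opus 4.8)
The plan is to prove the lemma by induction on $d$, turning the informal algorithm above into an inductive decomposition. For the base case $d\le n-1$ the only nontrivial constraint of Proposition \ref{hcone} is $h_0\ge h_1\ge\dots\ge h_d$: condition (2) is vacuous since $d\le n-1$ makes all of its right-hand entries zero, while condition (1) reduces to this chain because there is only one row. Setting $h_{d+1}:=0$ we then get $h=\sum_{i=0}^{d}(h_i-h_{i+1})\cdot(\underbrace{1,\dots,1}_{i+1})$, a positive rational combination of the elements of $\Ex(d)$ listed in Theorem \ref{extr}(0).

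For the inductive step let $d=n\cdot m+r$ with $r\in\{0,\dots,n-1\}$ and $h\in\mathbb{H}(d)$. First I would subtract the largest multiple $q\cdot s^{d}$ permitted by non-negativity; by Lemma \ref{lem1} the remainder $h'=h-q\,s^{d}$ still lies in $\mathbb{H}(d)$, and by maximality of $q$ it has a vanishing coordinate. Since $s^{d}\in\Ex(d)$, it suffices to decompose $h'$. If $h'_d=0$ then $h'\in\mathbb{H}(d-1)$, so the induction hypothesis writes $h'$ as a positive rational combination of $\Ex(d-1)\subseteq\Ex(d)$, and together with the $s^{d}$-term this settles the case.

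Otherwise $h'$ is \emph{reduced}: $h'_d>0$ while $h'_k=0$ for some $k<d$. By Lemma \ref{lem3} this is impossible when $r=n-1$, hence $r\le n-2$, and $\Ex(d)$ contains $t^{d}$ together with all glueings $t^{d}*e$, $e\in\Ex(d-2r-3)$. Now I would peel off the largest multiple $c\cdot(t^{d}*s^{\,d-2r-3})$ compatible with non-negativity (reading $t^{d}$ for $t^{d}*s^{\,d-2r-3}$ when $d-2r-3<0$), which is again an extremal point of $\mathbb{H}(d)$. The step to justify is that for a reduced $h'$ this subtraction lands back in $\mathbb{H}(d-1)$, i.e.\ the minimal ratio defining $c$ is attained at coordinate $d$, so that $h'-c\,(t^{d}*s^{\,d-2r-3})$ has vanishing top coordinate and still lies in the cone. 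Granting it, the induction hypothesis applied in $\mathbb{H}(d-1)$ together with the inclusions $\Ex(d-1)\subseteq\Ex(d)$ and $t^{d}*\Ex(d-2r-3)\subseteq\Ex(d)$ assembles the three pieces into the desired decomposition. If the minimal ratio were attained only below degree $d$, the resulting new vanishing coordinate would let one repeat this reduced step with a strictly smaller glued block; since each step either lowers $d$ or shrinks the glued block, the whole procedure terminates.

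The main obstacle is precisely this structural claim: the zero of a reduced $h'\in\mathbb{H}(d)$ should force its h-diagram to fit, up to degree $d$ exactly, under a staircase of shape $t^{d}*(\text{maximal h-vector of the glued block})$, so that subtracting that staircase to maximal height removes all boxes in degree $d$ while staying inside the cone. Proving this means using the combinatorics of Proposition \ref{hcone} in earnest: the propagation of zeros along decreasing columns and rows (the same mechanism as in the proof of Lemma \ref{lem3}) pins down the support of $h'$ near degree $d$, and a fitting argument in the spirit of Lemma \ref{lem1} shows the subtracted multiple keeps $h'$ in $\mathbb{H}(d)$. All remaining ingredients — Lemma \ref{lem1} for the $s^{d}$-step, Lemma \ref{lem3} for the case distinction, and the recursive description of $\Ex(d)$ in Theorem \ref{extr} for keeping track of which extremal points occur — are routine bookkeeping.
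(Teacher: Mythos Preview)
Your induction skeleton and the base case are fine, and you correctly identify that Lemmas \ref{lem1} and \ref{lem3} reduce the problem to decomposing a reduced $h$ with $r\le n-2$. But the inductive step has a real gap: you never prove that subtracting $c\cdot(t^{d}*s^{\,d-2r-3})$ (or a smaller glued block) keeps the remainder in $\mathbb{H}(d)$. Non-negativity alone is not enough; membership in the cone is governed by Proposition \ref{hcone}, not by coordinate-wise positivity. Your ``structural claim'' that the minimal ratio is attained at coordinate $d$ is in fact false in general: for $n=2$, the reduced vector $h=(2,1,3,0,2)\in\mathbb{H}(4)$ gives $c=\min(2,1,\tfrac{3}{2},2)=1$, attained at coordinate $1$, not $4$. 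Although the remainder $(1,0,1,0,1)=t^{4}$ happens to lie in the cone here, you give no argument that it always will. Your fallback of ``repeating with a smaller glued block'' presupposes exactly this unproven cone-preservation, so the argument does not close.

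The paper's proof takes a different route precisely to avoid this difficulty. Instead of subtracting a single extremal vector and hoping the remainder stays in the cone, it works directly with an h-diagram of $h$ (choosing one that pushes boxes as far right as possible, so that $h_d^{s_d}=h_d$), and then performs two geometric cuts. First it slices off everything to the right of column $r+1$, obtaining an h-diagram $h'$ of degree at most $d-2r-3$, to which induction applies up to height $h_d$; gluing this back via the $*$-operator produces the $t^{d}*e$ summands. Second, it slices off the part of the diagram above height $h_d$, obtaining an h-diagram $h''$ of degree strictly less than $d$, to which induction applies again. Because both $h'$ and $h''$ are themselves h-diagrams, they lie in the cone \emph{by construction} via Proposition \ref{hcone}, and no analogue of your cone-preservation claim is ever needed. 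If you want to rescue your approach, you would have to prove a lemma parallel to Lemma \ref{lem1} for subtraction of $t^{d}*s^{\,d-2r-3}$ from a reduced vector; doing so essentially forces you into the same h-diagram manipulations the paper carries out.
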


\begin{proof}
The proof goes by induction over the degree $d$. \\Set $m= \lfloor\frac{d}{n} \rfloor$ and $r=d-mn$.

It is clear up to $d=n-1$, since there is only one row in the h-diagram and the extremal points are just of the form $s^\ell=(1,\dots,1),$ \, with \,$\ell \leq d$. So we can use lemma (\ref{lem1}) to decompose.

Let $h\in \mathbb{H}(d)$ with $h_d > 0$. By lemma (\ref{lem1}) and lemma (\ref{lem3}) we can assume that $h$ is reduced and $d\neq (n-1) \mathrm{mod}\; n$. For a given $h \in \mathbb{H}(d)$ there are many decompositions fulfilling proposition (\ref{hcone}). We choose one that is maximal in the truncated sum $\sum_{i,j} \bar{h}_i^j$ with $\bar{h}_i^j= h_d$ for $h_i^j \geq h_d$ and $\bar{h}_i^j =h_i^j$ else. We also can achieve this by shifting the boxes as far to the right as possible.

As $h$ is reduced there exists a $k<d$ with $h_k =0$. From descending row and column condition we get $h_{k+1}=0$ for all $k \neq (n-1) \mathrm{mod}\; n$ and $h_{k+n}=0$ for all $k+n<d$. Therefore $h_{mn-1}=0$ and $h_d^{s_d} =h_d$.

Coming back to the h-diagram we cut off anything to the right of column \linebreak $r+1$, saying next to the tower $t^d$ and call the diagram on the right side $h'$. A visualisation of this process and the next steps is given in example (\ref{cutoff}). The boxes of the new diagram are still nested in the corner with decreasing rows and columns therefore we get again an h-diagram,
that has by construction at most $m$ rows and $d-2r-2$ columns. Hence the maximal degree of $h'$ is $d'= d-2r-3$ and we can use induction to decompose $h'$, but we only do this up to height $h_d$.
Glueing this back to the first $r+1$ columns again we have an h-diagram. The layers of $h'$ up to height $h_d$ are corresponding to staircases of extremal points in $\Ex(d')$, which are glued by the $*$-operator to the towers $t^d$ with the corresponding height, or maybe there is nothing left to glue we just take the tower itself.

Next we cut off anything above height $h_d$ and get the upper h-diagram $h''$ with degree strictly less than $d$ and therefore we can use induction again. The lowest level of $h''$ is with respect to the partial ordering smaller than the lower adjacent level to height $h_d$ in the last completed h-diagram, because we shifted as many boxes as possible under height $h_d$ and decomposed $h'$ only up to height $h_d$. Therefore we get a decomposition in a total order as stated in corollary (\ref{cor}).
\end{proof}

\pagebreak
\begin{exa}\label{cutoff}
Let $n=4$\; and \,$h=(3,3,2,2,3,3,2,0,1,1)$, a possible h-diagram could look like this:

\unitlength1cm
\begin{picture}(13,6.3)
\put(0,3.8){
\put(.5,0){
\begin{tikzpicture}
\draw (0,0)--(1.5,0) (0,0)--(0,1.3)  (0,0)--(-.8,-.8);
\filldraw [fill=lgr,draw=black] (0,.3)--(-.3*.7,.3-.3*.7)--(.3-.3*.7,.3-.3*.7)--(.3,.3)--(0,.3);
\filldraw [fill=lgr,draw=black] (.3,.3)--(.3-.3*.7,.3-.3*.7)--(.6-.3*.7,.3-.3*.7)--(.6,.3)--(.3,.3);
\filldraw [fill=lgr,draw=black] (-.3*.7,.3-.3*.7)--(-.6*.7,.3-.6*.7)--(.3-.6*.7,.3-.6*.7)--(.3-.3*.7,.3-.3*.7)--(-.3*.7,.3-.3*.7);
\filldraw [fill=lgr,draw=black] (.3-.3*.7,.3-.3*.7)--(.3-.6*.7,.3-.6*.7)--(.6-.6*.7,.3-.6*.7)--(.6-.3*.7,.3-.3*.7)--(.3-.3*.7,.3-.3*.7);
\filldraw [fill=lgr,draw=black] (-.6*.7,.3-.6*.7)--(-.9*.7,.3-.9*.7)--(.3-.9*.7,.3-.9*.7)--(.3-.6*.7,.3-.6*.7)--(-.6*.7,.3-.6*.7);
\filldraw [fill=lgr,draw=black] (.3-.6*.7,.3-.6*.7)--(.3-.9*.7,.3-.9*.7)--(.6-.9*.7,.3-.9*.7)--(.6-.6*.7,.3-.6*.7)--(.3-.6*.7,.3-.6*.7);

\filldraw [fill=dgr,draw=black] (.6,.3)--(.6-.3*.7,.3-.3*.7)--(.6-.3*.7,-.3*.7)--(.6,0)--(.6,.3);
\filldraw [fill=dgr,draw=black] (.6-.3*.7,.3-.3*.7)--(.6-.6*.7,.3-.6*.7)--(.6-.6*.7,-.6*.7)--(.6-.3*.7,-.3*.7)--(.6-.3*.7,.3-.3*.7);
\filldraw [fill=dgr,draw=black] (.6-.6*.7,.3-.6*.7)--(.6-.9*.7,.3-.9*.7)--(.6-.9*.7,-.9*.7)--(.6-.6*.7,-.6*.7)--(.6-.6*.7,.3-.6*.7);

\filldraw [fill=gr,draw=black] (-.9*.7,.3-.9*.7)rectangle(.3-.9*.7,-.9*.7);
\filldraw [fill=gr,draw=black] (.3-.9*.7,.3-.9*.7)rectangle(.6-.9*.7,-.9*.7);

\filldraw [fill=lgr,draw=black] (.3,.9)--(.3-.6*.7,.9-.6*.7)--(.6-.6*.7,.9-.6*.7)--(.6,.9)--(.3,.9);
\filldraw [fill=lgr,draw=black] (0,.9)--(-.6*.7,.9-.6*.7)--(.3-.6*.7,.9-.6*.7)--(.3,.9)--(0,.9);
\filldraw [fill=lgr,draw=black] (0,.9)--(-.3*.7,.9-.3*.7)--(.3-.3*.7,.9-.3*.7)--(.3,.9)--(0,.9);
\filldraw [fill=lgr,draw=black] (.3,.9)--(.3-.3*.7,.9-.3*.7)--(.6-.3*.7,.9-.3*.7)--(.6,.9)--(.3,.9);
\filldraw [fill=dgr,draw=black] (.6,.9)--(.6-.3*.7,.9-.3*.7)--(.6-.3*.7,.6-.3*.7)--(.6,.6)--(.6,.9);
\filldraw [fill=dgr,draw=black] (.6-.3*.7,.9-.3*.7)--(.6-.6*.7,.9-.6*.7)--(.6-.6*.7,.6-.6*.7)--(.6-.3*.7,.6-.3*.7)--(.6-.3*.7,.9-.3*.7);
\filldraw [fill=gr,draw=black] (-.6*.7,.6-.6*.7)rectangle(.3-.6*.7,.3-.6*.7);
\filldraw [fill=gr,draw=black] (.3-.6*.7,.6-.6*.7)rectangle(.6-.6*.7,.3-.6*.7);
\filldraw [fill=gr,draw=black] (-.6*.7,.9-.6*.7)rectangle(.3-.6*.7,.6-.6*.7);
\filldraw [fill=gr,draw=black] (.3-.6*.7,.9-.6*.7)rectangle(.6-.6*.7,.6-.6*.7);

\filldraw [fill=lgr,draw=black] (.6,.6)--(.6-.3*.7,.6-.3*.7)--(.9-.3*.7,.6-.3*.7)--(.9,.6)--(.6,.6);
\filldraw [fill=lgr,draw=black] (.9,.6)--(.9-.3*.7,.6-.3*.7)--(1.2-.3*.7,.6-.3*.7)--(1.2,.6)--(.9,.6);
\filldraw [fill=lgr,draw=black] (.6-.3*.7,.6-.3*.7)--(.6-.6*.7,.6-.6*.7)--(.9-.6*.7,.6-.6*.7)--(.9-.3*.7,.6-.3*.7)--(.6-.3*.7,.6-.3*.7);
\filldraw [fill=dgr,draw=black] (1.2,.3)--(1.2-.3*.7,.3-.3*.7)--(1.2-.3*.7,-.3*.7)--(1.2,0)--(1.2,.3);
\filldraw [fill=dgr,draw=black] (1.2,.6)--(1.2-.3*.7,.6-.3*.7)--(1.2-.3*.7,.3-.3*.7)--(1.2,.3)--(1.2,.6);
\filldraw [fill=dgr,draw=black] (.9-.3*.7,.3-.3*.7)--(.9-.6*.7,.3-.6*.7)--(.9-.6*.7,-.6*.7)--(.9-.3*.7,-.3*.7)--(.9-.3*.7,.3-.3*.7);
\filldraw [fill=dgr,draw=black] (.9-.3*.7,.6-.3*.7)--(.9-.6*.7,.6-.6*.7)--(.9-.6*.7,.3-.6*.7)--(.9-.3*.7,.3-.3*.7)--(.9-.3*.7,.6-.3*.7);
\filldraw [fill=gr,draw=black] (.9-.3*.7,.3-.3*.7)rectangle(1.2-.3*.7,-.3*.7);
\filldraw [fill=gr,draw=black] (.9-.3*.7,.6-.3*.7)rectangle(1.2-.3*.7,.3-.3*.7);
\filldraw [fill=gr,draw=black] (.6-.6*.7,.6-.6*.7)rectangle(.9-.6*.7,.3-.6*.7);
\filldraw [fill=gr,draw=black] (.6-.6*.7,.3-.6*.7)rectangle(.9-.6*.7,-.6*.7);

\end{tikzpicture}
}
\put(3.9,.7){$\longrightarrow$}
\put(2.9,1.5){\scriptsize{shifting the boxes}}

\put(4.8,0){
\begin{tikzpicture}
\draw (0,0)--(2.5,0) (0,0)--(0,1.3)  (0,0)--(-.8,-.8);
\filldraw [fill=lgr,draw=black] (0,.3)--(-.3*.7,.3-.3*.7)--(.3-.3*.7,.3-.3*.7)--(.3,.3)--(0,.3);
\filldraw [fill=lgr,draw=black] (.3,.3)--(.3-.3*.7,.3-.3*.7)--(.6-.3*.7,.3-.3*.7)--(.6,.3)--(.3,.3);
\filldraw [fill=lgr,draw=black] (-.3*.7,.3-.3*.7)--(-.6*.7,.3-.6*.7)--(.3-.6*.7,.3-.6*.7)--(.3-.3*.7,.3-.3*.7)--(-.3*.7,.3-.3*.7);
\filldraw [fill=lgr,draw=black] (.3-.3*.7,.3-.3*.7)--(.3-.6*.7,.3-.6*.7)--(.6-.6*.7,.3-.6*.7)--(.6-.3*.7,.3-.3*.7)--(.3-.3*.7,.3-.3*.7);
\filldraw [fill=lgr,draw=black] (-.6*.7,.3-.6*.7)--(-.9*.7,.3-.9*.7)--(.3-.9*.7,.3-.9*.7)--(.3-.6*.7,.3-.6*.7)--(-.6*.7,.3-.6*.7);
\filldraw [fill=lgr,draw=black] (.3-.6*.7,.3-.6*.7)--(.3-.9*.7,.3-.9*.7)--(.6-.9*.7,.3-.9*.7)--(.6-.6*.7,.3-.6*.7)--(.3-.6*.7,.3-.6*.7);

\filldraw [fill=dgr,draw=black] (.6,.3)--(.6-.3*.7,.3-.3*.7)--(.6-.3*.7,-.3*.7)--(.6,0)--(.6,.3);
\filldraw [fill=dgr,draw=black] (.6-.3*.7,.3-.3*.7)--(.6-.6*.7,.3-.6*.7)--(.6-.6*.7,-.6*.7)--(.6-.3*.7,-.3*.7)--(.6-.3*.7,.3-.3*.7);
\filldraw [fill=dgr,draw=black] (.6-.6*.7,.3-.6*.7)--(.6-.9*.7,.3-.9*.7)--(.6-.9*.7,-.9*.7)--(.6-.6*.7,-.6*.7)--(.6-.6*.7,.3-.6*.7);

\filldraw [fill=gr,draw=black] (-.9*.7,.3-.9*.7)rectangle(.3-.9*.7,-.9*.7);
\filldraw [fill=gr,draw=black] (.3-.9*.7,.3-.9*.7)rectangle(.6-.9*.7,-.9*.7);

\filldraw [fill=lgr,draw=black] (.3,.9)--(.3-.3*.7,.9-.3*.7)--(.6-.3*.7,.9-.3*.7)--(.6,.9)--(.3,.9);
\filldraw [fill=lgr,draw=black] (0,.9)--(-.3*.7,.9-.3*.7)--(.3-.3*.7,.9-.3*.7)--(.3,.9)--(0,.9);
\filldraw [fill=gr,draw=black] (-.3*.7,.6-.3*.7)rectangle(.3-.3*.7,.3-.3*.7);
\filldraw [fill=gr,draw=black] (.3-.3*.7,.6-.3*.7)rectangle(.6-.3*.7,.3-.3*.7);
\filldraw [fill=gr,draw=black] (-.3*.7,.9-.3*.7)rectangle(.3-.3*.7,.6-.3*.7);
\filldraw [fill=gr,draw=black] (.3-.3*.7,.9-.3*.7)rectangle(.6-.3*.7,.6-.3*.7);
\filldraw [fill=dgr,draw=black] (.6,.9)--(.6-.3*.7,.9-.3*.7)--(.6-.3*.7,.6-.3*.7)--(.6,.6)--(.6,.9);

\filldraw [fill=lgr,draw=black] (.6,.6)--(.6-.3*.7,.6-.3*.7)--(.9-.3*.7,.6-.3*.7)--(.9,.6)--(.6,.6);
\filldraw [fill=lgr,draw=black] (.9,.6)--(.9-.3*.7,.6-.3*.7)--(1.2-.3*.7,.6-.3*.7)--(1.2,.6)--(.9,.6);
\filldraw [fill=lgr,draw=black] (1.2,.6)--(1.2-.3*.7,.6-.3*.7)--(1.5-.3*.7,.6-.3*.7)--(1.5,.6)--(1.2,.6);
\filldraw [fill=lgr,draw=black] (1.5,.6)--(1.5-.3*.7,.6-.3*.7)--(1.8-.3*.7,.6-.3*.7)--(1.8,.6)--(1.5,.6);
\filldraw [fill=lgr,draw=black] (1.8,.6)--(1.8-.3*.7,.6-.3*.7)--(2.1-.3*.7,.6-.3*.7)--(2.1,.6)--(1.8,.6);
\filldraw [fill=gr,draw=black] (.6-.3*.7,.3-.3*.7)rectangle(.9-.3*.7,-.3*.7);
\filldraw [fill=gr,draw=black] (.6-.3*.7,.6-.3*.7)rectangle(.9-.3*.7,.3-.3*.7);
\filldraw [fill=gr,draw=black] (.9-.3*.7,.3-.3*.7)rectangle(1.2-.3*.7,-.3*.7);
\filldraw [fill=gr,draw=black] (.9-.3*.7,.6-.3*.7)rectangle(1.2-.3*.7,.3-.3*.7);
\filldraw [fill=gr,draw=black] (1.2-.3*.7,.3-.3*.7)rectangle(1.5-.3*.7,-.3*.7);
\filldraw [fill=gr,draw=black] (1.2-.3*.7,.6-.3*.7)rectangle(1.5-.3*.7,.3-.3*.7);
\filldraw [fill=gr,draw=black] (1.5-.3*.7,.3-.3*.7)rectangle(1.8-.3*.7,-.3*.7);
\filldraw [fill=gr,draw=black] (1.5-.3*.7,.6-.3*.7)rectangle(1.8-.3*.7,.3-.3*.7);
\filldraw [fill=gr,draw=black] (1.8-.3*.7,.3-.3*.7)rectangle(2.1-.3*.7,-.3*.7);
\filldraw [fill=gr,draw=black] (1.8-.3*.7,.6-.3*.7)rectangle(2.1-.3*.7,.3-.3*.7);
\filldraw [fill=dgr,draw=black] (2.1,.3)--(2.1-.3*.7,.3-.3*.7)--(2.1-.3*.7,-.3*.7)--(2.1,0)--(2.1,.3);
\filldraw [fill=dgr,draw=black] (2.1,.6)--(2.1-.3*.7,.6-.3*.7)--(2.1-.3*.7,.3-.3*.7)--(2.1,.3)--(2.1,.6);
\end{tikzpicture}
}
\put(9,.7){$\longrightarrow$}
\put(7,1.8){\scriptsize{cutting off $h'$ and decompose it}}
\put(8,1.4){\scriptsize{ up to height $h_d$}}

\put(10.6,0){
\begin{tikzpicture}
\draw (0,0)--(2.4,0) (0,0)--(0,1.3)  (0,0)--(-.8,-.8);

\filldraw [fill=lgr,draw=black] (-.5,.3)--(-.5-.3*.7,.3-.3*.7)--(-.2-.3*.7,.3-.3*.7)--(-.2,.3)--(-.5,.3);
\filldraw [fill=lgr,draw=black] (-.2,.3)--(-.2-.3*.7,.3-.3*.7)--(.1-.3*.7,.3-.3*.7)--(.1,.3)--(-.2,.3);
\filldraw [fill=lgr,draw=black] (-.5-.3*.7,.3-.3*.7)--(-.5-.6*.7,.3-.6*.7)--(-.2-.6*.7,.3-.6*.7)--(-.2-.3*.7,.3-.3*.7)--(-.5-.3*.7,.3-.3*.7);
\filldraw [fill=lgr,draw=black] (-.2-.3*.7,.3-.3*.7)--(-.2-.6*.7,.3-.6*.7)--(.1-.6*.7,.3-.6*.7)--(.1-.3*.7,.3-.3*.7)--(-.2-.3*.7,.3-.3*.7);
\filldraw [fill=lgr,draw=black] (-.5-.6*.7,.3-.6*.7)--(-.5-.9*.7,.3-.9*.7)--(-.2-.9*.7,.3-.9*.7)--(-.2-.6*.7,.3-.6*.7)--(-.5-.6*.7,.3-.6*.7);
\filldraw [fill=lgr,draw=black] (-.2-.6*.7,.3-.6*.7)--(-.2-.9*.7,.3-.9*.7)--(.1-.9*.7,.3-.9*.7)--(.1-.6*.7,.3-.6*.7)--(-.2-.6*.7,.3-.6*.7);
\filldraw [fill=dgr,draw=black] (.1,.3)--(.1-.3*.7,.3-.3*.7)--(.1-.3*.7,-.3*.7)--(.1,0)--(.1,.3);
\filldraw [fill=dgr,draw=black] (.1-.3*.7,.3-.3*.7)--(.1-.6*.7,.3-.6*.7)--(.1-.6*.7,-.6*.7)--(.1-.3*.7,-.3*.7)--(.1-.3*.7,.3-.3*.7);
\filldraw [fill=dgr,draw=black] (.1-.6*.7,.3-.6*.7)--(.1-.9*.7,.3-.9*.7)--(.1-.9*.7,-.9*.7)--(.1-.6*.7,-.6*.7)--(.1-.6*.7,.3-.6*.7);
\filldraw [fill=gr,draw=black] (-.5-.9*.7,.3-.9*.7)rectangle(-.2-.9*.7,-.9*.7);
\filldraw [fill=gr,draw=black] (-.2-.9*.7,.3-.9*.7)rectangle(.1-.9*.7,-.9*.7);

\filldraw [fill=lgr,draw=black] (-.2,.9)--(-.2-.3*.7,.9-.3*.7)--(.1-.3*.7,.9-.3*.7)--(.1,.9)--(-.2,.9);
\filldraw [fill=lgr,draw=black] (-.5,.9)--(-.5-.3*.7,.9-.3*.7)--(-.2-.3*.7,.9-.3*.7)--(-.2,.9)--(-.5,.9);
\filldraw [fill=gr,draw=black] (-.5-.3*.7,.6-.3*.7)rectangle(-.2-.3*.7,.3-.3*.7);
\filldraw [fill=gr,draw=black] (-.2-.3*.7,.6-.3*.7)rectangle(.1-.3*.7,.3-.3*.7);
\filldraw [fill=gr,draw=black] (-.5-.3*.7,.9-.3*.7)rectangle(-.2-.3*.7,.6-.3*.7);
\filldraw [fill=gr,draw=black] (-.2-.3*.7,.9-.3*.7)rectangle(.1-.3*.7,.6-.3*.7);
\filldraw [fill=dgr,draw=black] (.1,.9)--(.1-.3*.7,.9-.3*.7)--(.1-.3*.7,.6-.3*.7)--(.1,.6)--(.1,.9);
\filldraw [fill=dgr,draw=black] (.1,.6)--(.1-.3*.7,.6-.3*.7)--(.1-.3*.7,.3-.3*.7)--(.1,.3)--(.1,.6);

\filldraw [fill=lgr,draw=black] (.6,.6)--(.6-.3*.7,.6-.3*.7)--(.9-.3*.7,.6-.3*.7)--(.9,.6)--(.6,.6);
\filldraw [fill=lgr,draw=black] (.9,.6)--(.9-.3*.7,.6-.3*.7)--(1.2-.3*.7,.6-.3*.7)--(1.2,.6)--(.9,.6);
\filldraw [fill=lgr,draw=black] (1.2,.6)--(1.2-.3*.7,.6-.3*.7)--(1.5-.3*.7,.6-.3*.7)--(1.5,.6)--(1.2,.6);
\filldraw [fill=lgr,draw=black] (1.5,.6)--(1.5-.3*.7,.6-.3*.7)--(1.8-.3*.7,.6-.3*.7)--(1.8,.6)--(1.5,.6);
\filldraw [fill=lgr,draw=black] (1.8,.3)--(1.8-.3*.7,.3-.3*.7)--(2.1-.3*.7,.3-.3*.7)--(2.1,.3)--(1.8,.3);
\filldraw [fill=gr,draw=black] (.6-.6*.7,.3-.6*.7)rectangle(.9-.6*.7,-.6*.7);
\filldraw [fill=gr,draw=black] (.6-.3*.7,.6-.3*.7)rectangle(.9-.3*.7,.3-.3*.7);
\filldraw [fill=gr,draw=black] (.9-.3*.7,.3-.3*.7)rectangle(1.2-.3*.7,-.3*.7);
\filldraw [fill=gr,draw=black] (.9-.3*.7,.6-.3*.7)rectangle(1.2-.3*.7,.3-.3*.7);
\filldraw [fill=gr,draw=black] (1.2-.3*.7,.3-.3*.7)rectangle(1.5-.3*.7,-.3*.7);
\filldraw [fill=gr,draw=black] (1.2-.3*.7,.6-.3*.7)rectangle(1.5-.3*.7,.3-.3*.7);
\filldraw [fill=gr,draw=black] (1.5-.3*.7,.3-.3*.7)rectangle(1.8-.3*.7,-.3*.7);
\filldraw [fill=gr,draw=black] (1.5-.3*.7,.6-.3*.7)rectangle(1.8-.3*.7,.3-.3*.7);
\filldraw [fill=gr,draw=black] (1.8-.3*.7,.3-.3*.7)rectangle(2.1-.3*.7,-.3*.7);
\filldraw [fill=dgr,draw=black] (2.1,.3)--(2.1-.3*.7,.3-.3*.7)--(2.1-.3*.7,-.3*.7)--(2.1,0)--(2.1,.3);
\filldraw [fill=dgr,draw=black] (1.8,.6)--(1.8-.3*.7,.6-.3*.7)--(1.8-.3*.7,.3-.3*.7)--(1.8,.3)--(1.8,.6);
\filldraw [fill=dgr,draw=black] (.9-.3*.7,.3-.3*.7)--(.9-.6*.7,.3-.6*.7)--(.9-.6*.7,-.6*.7)--(.9-.3*.7,0-.3*.7)--(.9-.3*.7,.3-.3*.7);
\filldraw [fill=lgr,draw=black] (.6-.3*.7,.3-.3*.7)--(.6-.6*.7,.3-.6*.7)--(.9-.6*.7,.3-.6*.7)--(.9-.3*.7,.3-.3*.7)--(.6-.3*.7,.3-.3*.7);

\draw[->] (1.2,1.4)--(1.2,.9);
\draw(1.25,1.65)node{$ h'$};
\end{tikzpicture}
}

\put(10.4,-.55){\scriptsize{glueing and cutting off $h''$}}

\put(7.8,-3){
\begin{tikzpicture}
\draw [->] (1.4,2)--(.9,1.5);

\draw (0,0)--(2.4,0) (0,0)--(0,1.7)  (0,0)--(-.8,-.8);
\filldraw [fill=lgr,draw=black] (0,.3)--(-.3*.7,.3-.3*.7)--(.3-.3*.7,.3-.3*.7)--(.3,.3)--(0,.3);
\filldraw [fill=lgr,draw=black] (.3,.3)--(.3-.3*.7,.3-.3*.7)--(.6-.3*.7,.3-.3*.7)--(.6,.3)--(.3,.3);
\filldraw [fill=lgr,draw=black] (-.3*.7,.3-.3*.7)--(-.6*.7,.3-.6*.7)--(.3-.6*.7,.3-.6*.7)--(.3-.3*.7,.3-.3*.7)--(-.3*.7,.3-.3*.7);
\filldraw [fill=lgr,draw=black] (.3-.3*.7,.3-.3*.7)--(.3-.6*.7,.3-.6*.7)--(.6-.6*.7,.3-.6*.7)--(.6-.3*.7,.3-.3*.7)--(.3-.3*.7,.3-.3*.7);
\filldraw [fill=lgr,draw=black] (-.6*.7,.3-.6*.7)--(-.9*.7,.3-.9*.7)--(.3-.9*.7,.3-.9*.7)--(.3-.6*.7,.3-.6*.7)--(-.6*.7,.3-.6*.7);
\filldraw [fill=lgr,draw=black] (.3-.6*.7,.3-.6*.7)--(.3-.9*.7,.3-.9*.7)--(.6-.9*.7,.3-.9*.7)--(.6-.6*.7,.3-.6*.7)--(.3-.6*.7,.3-.6*.7);

\filldraw [fill=dgr,draw=black] (.6,.3)--(.6-.3*.7,.3-.3*.7)--(.6-.3*.7,-.3*.7)--(.6,0)--(.6,.3);
\filldraw [fill=dgr,draw=black] (.6-.3*.7,.3-.3*.7)--(.6-.6*.7,.3-.6*.7)--(.6-.6*.7,-.6*.7)--(.6-.3*.7,-.3*.7)--(.6-.3*.7,.3-.3*.7);
\filldraw [fill=dgr,draw=black] (.6-.6*.7,.3-.6*.7)--(.6-.9*.7,.3-.9*.7)--(.6-.9*.7,-.9*.7)--(.6-.6*.7,-.6*.7)--(.6-.6*.7,.3-.6*.7);
\filldraw [fill=gr,draw=black] (-.9*.7,.3-.9*.7)rectangle(.3-.9*.7,-.9*.7);
\filldraw [fill=gr,draw=black] (.3-.9*.7,.3-.9*.7)rectangle(.6-.9*.7,-.9*.7);

\filldraw [fill=lgr,draw=black] (.6,.3)--(.6-.3*.7,.3-.3*.7)--(.9-.3*.7,.3-.3*.7)--(.9,.3)--(.6,.3);
\filldraw [fill=lgr,draw=black] (.9,.3)--(.9-.3*.7,.3-.3*.7)--(1.2-.3*.7,.3-.3*.7)--(1.2,.3)--(.9,.3);
\filldraw [fill=lgr,draw=black] (1.2,.3)--(1.2-.3*.7,.3-.3*.7)--(1.5-.3*.7,.3-.3*.7)--(1.5,.3)--(1.2,.3);
\filldraw [fill=lgr,draw=black] (1.5,.3)--(1.5-.3*.7,.3-.3*.7)--(1.8-.3*.7,.3-.3*.7)--(1.8,.3)--(1.5,.3);
\filldraw [fill=lgr,draw=black] (1.8,.3)--(1.8-.3*.7,.3-.3*.7)--(2.1-.3*.7,.3-.3*.7)--(2.1,.3)--(1.8,.3);
\filldraw [fill=dgr,draw=black] (2.1,.3)--(2.1-.3*.7,.3-.3*.7)--(2.1-.3*.7,-.3*.7)--(2.1,0)--(2.1,.3);
\filldraw [fill=gr,draw=black] (.9-.3*.7,.3-.3*.7)rectangle(1.2-.3*.7,-.3*.7);
\filldraw [fill=gr,draw=black] (1.2-.3*.7,.3-.3*.7)rectangle(1.5-.3*.7,-.3*.7);
\filldraw [fill=gr,draw=black] (1.5-.3*.7,.3-.3*.7)rectangle(1.8-.3*.7,-.3*.7);
\filldraw [fill=gr,draw=black] (1.8-.3*.7,.3-.3*.7)rectangle(2.1-.3*.7,-.3*.7);
\filldraw [fill=gr,draw=black] (.6-.6*.7,.3-.6*.7)rectangle(.9-.6*.7,-.6*.7);
\filldraw [fill=dgr,draw=black] (.9-.3*.7,.3-.3*.7)--(.9-.6*.7,.3-.6*.7)--(.9-.6*.7,-.6*.7)--(.9-.3*.7,0-.3*.7)--(.9-.3*.7,.3-.3*.7);
\filldraw [fill=lgr,draw=black] (.6-.3*.7,.3-.3*.7)--(.6-.6*.7,.3-.6*.7)--(.9-.6*.7,.3-.6*.7)--(.9-.3*.7,.3-.3*.7)--(.6-.3*.7,.3-.3*.7);

\filldraw [fill=lgr,draw=black] (.3,1.4)--(.3-.3*.7,1.4-.3*.7)--(.6-.3*.7,1.4-.3*.7)--(.6,1.4)--(.3,1.4);
\filldraw [fill=lgr,draw=black] (0,1.4)--(-.3*.7,1.4-.3*.7)--(.3-.3*.7,1.4-.3*.7)--(.3,1.4)--(0,1.4);
\filldraw [fill=gr,draw=black] (-.3*.7,1.1-.3*.7)rectangle(.3-.3*.7,.8-.3*.7);
\filldraw [fill=gr,draw=black] (.3-.3*.7,1.1-.3*.7)rectangle(.6-.3*.7,.8-.3*.7);
\filldraw [fill=gr,draw=black] (-.3*.7,1.4-.3*.7)rectangle(.3-.3*.7,1.1-.3*.7);
\filldraw [fill=gr,draw=black] (.3-.3*.7,1.4-.3*.7)rectangle(.6-.3*.7,1.1-.3*.7);
\filldraw [fill=dgr,draw=black] (.6,1.4)--(.6-.3*.7,1.4-.3*.7)--(.6-.3*.7,1.1-.3*.7)--(.6,1.1)--(.6,1.4);

\filldraw [fill=lgr,draw=black] (.6,1.1)--(.6-.3*.7,1.1-.3*.7)--(.9-.3*.7,1.1-.3*.7)--(.9,1.1)--(.6,1.1);
\filldraw [fill=lgr,draw=black] (.9,1.1)--(.9-.3*.7,1.1-.3*.7)--(1.2-.3*.7,1.1-.3*.7)--(1.2,1.1)--(.9,1.1);
\filldraw [fill=lgr,draw=black] (1.2,1.1)--(1.2-.3*.7,1.1-.3*.7)--(1.5-.3*.7,1.1-.3*.7)--(1.5,1.1)--(1.2,1.1);
\filldraw [fill=lgr,draw=black] (1.5,1.1)--(1.5-.3*.7,1.1-.3*.7)--(1.8-.3*.7,1.1-.3*.7)--(1.8,1.1)--(1.5,1.1);
\filldraw [fill=gr,draw=black] (.6-.3*.7,1.1-.3*.7)rectangle(.9-.3*.7,.8-.3*.7);
\filldraw [fill=gr,draw=black] (.9-.3*.7,1.1-.3*.7)rectangle(1.2-.3*.7,.8-.3*.7);
\filldraw [fill=gr,draw=black] (1.2-.3*.7,1.1-.3*.7)rectangle(1.5-.3*.7,.8-.3*.7);
\filldraw [fill=gr,draw=black] (1.5-.3*.7,1.1-.3*.7)rectangle(1.8-.3*.7,.8-.3*.7);
\filldraw [fill=dgr,draw=black] (1.8,1.1)--(1.8-.3*.7,1.1-.3*.7)--(1.8-.3*.7,.8-.3*.7)--(1.8,.8)--(1.8,1.1);

\put(2.3,.9) {$\longleftarrow h''$}
 \end{tikzpicture}
}

\put(6.8,-1.95) {$\longleftarrow $}
\put(4.8,-1.2) {\scriptsize{decompose $h''$ and glue}}

\put(2.7,-3){
\begin{tikzpicture}
\draw (0,0)--(2.5,0) (0,0)--(0,1.3)  (0,0)--(-.8,-.8);
\filldraw [fill=lgr,draw=black] (0,.3)--(-.3*.7,.3-.3*.7)--(.3-.3*.7,.3-.3*.7)--(.3,.3)--(0,.3);
\filldraw [fill=lgr,draw=black] (.3,.3)--(.3-.3*.7,.3-.3*.7)--(.6-.3*.7,.3-.3*.7)--(.6,.3)--(.3,.3);

\filldraw [fill=lgr,draw=black] (-.6*.7,.3-.6*.7)--(-.9*.7,.3-.9*.7)--(.3-.9*.7,.3-.9*.7)--(.3-.6*.7,.3-.6*.7)--(-.6*.7,.3-.6*.7);
\filldraw [fill=lgr,draw=black] (.3-.6*.7,.3-.6*.7)--(.3-.9*.7,.3-.9*.7)--(.6-.9*.7,.3-.9*.7)--(.6-.6*.7,.3-.6*.7)--(.3-.6*.7,.3-.6*.7);

\filldraw [fill=dgr,draw=black] (.6,.3)--(.6-.3*.7,.3-.3*.7)--(.6-.3*.7,-.3*.7)--(.6,0)--(.6,.3);
\filldraw [fill=dgr,draw=black] (.6-.3*.7,.3-.3*.7)--(.6-.6*.7,.3-.6*.7)--(.6-.6*.7,-.6*.7)--(.6-.3*.7,-.3*.7)--(.6-.3*.7,.3-.3*.7);
\filldraw [fill=dgr,draw=black] (.6-.6*.7,.3-.6*.7)--(.6-.9*.7,.3-.9*.7)--(.6-.9*.7,-.9*.7)--(.6-.6*.7,-.6*.7)--(.6-.6*.7,.3-.6*.7);

\filldraw [fill=gr,draw=black] (-.9*.7,.3-.9*.7)rectangle(.3-.9*.7,-.9*.7);
\filldraw [fill=gr,draw=black] (.3-.9*.7,.3-.9*.7)rectangle(.6-.9*.7,-.9*.7);

\filldraw [fill=lgr,draw=black] (.3,.9)--(.3-.3*.7,.9-.3*.7)--(.6-.3*.7,.9-.3*.7)--(.6,.9)--(.3,.9);
\filldraw [fill=lgr,draw=black] (0,.9)--(-.3*.7,.9-.3*.7)--(.3-.3*.7,.9-.3*.7)--(.3,.9)--(0,.9);
\filldraw [fill=gr,draw=black] (-.3*.7,.6-.3*.7)rectangle(.3-.3*.7,.3-.3*.7);
\filldraw [fill=gr,draw=black] (.3-.3*.7,.6-.3*.7)rectangle(.6-.3*.7,.3-.3*.7);
\filldraw [fill=gr,draw=black] (-.3*.7,.9-.3*.7)rectangle(.3-.3*.7,.6-.3*.7);
\filldraw [fill=gr,draw=black] (.3-.3*.7,.9-.3*.7)rectangle(.6-.3*.7,.6-.3*.7);
\filldraw [fill=dgr,draw=black] (.6,.9)--(.6-.3*.7,.9-.3*.7)--(.6-.3*.7,.6-.3*.7)--(.6,.6)--(.6,.9);

\filldraw [fill=lgr,draw=black] (.6,.6)--(.6-.3*.7,.6-.3*.7)--(.9-.3*.7,.6-.3*.7)--(.9,.6)--(.6,.6);
\filldraw [fill=lgr,draw=black] (.9,.6)--(.9-.3*.7,.6-.3*.7)--(1.2-.3*.7,.6-.3*.7)--(1.2,.6)--(.9,.6);
\filldraw [fill=lgr,draw=black] (1.2,.45)--(1.2-.3*.7,.45-.3*.7)--(1.5-.3*.7,.45-.3*.7)--(1.5,.45)--(1.2,.45);
\filldraw [fill=lgr,draw=black] (1.5,.45)--(1.5-.3*.7,.45-.3*.7)--(1.8-.3*.7,.45-.3*.7)--(1.8,.45)--(1.5,.45);
\filldraw [fill=lgr,draw=black] (1.8,.3)--(1.8-.3*.7,.3-.3*.7)--(2.1-.3*.7,.3-.3*.7)--(2.1,.3)--(1.8,.3);
\filldraw [fill=gr,draw=black] (.6-.3*.7,.3-.3*.7)rectangle(.9-.3*.7,-.3*.7);
\filldraw [fill=gr,draw=black] (.6-.3*.7,.6-.3*.7)rectangle(.9-.3*.7,.3-.3*.7);
\filldraw [fill=gr,draw=black] (.9-.3*.7,.3-.3*.7)rectangle(1.2-.3*.7,-.3*.7);
\filldraw [fill=gr,draw=black] (.9-.3*.7,.6-.3*.7)rectangle(1.2-.3*.7,.3-.3*.7);
\filldraw [fill=gr,draw=black] (1.2-.3*.7,.3-.3*.7)rectangle(1.5-.3*.7,-.3*.7);
\filldraw [fill=gr,draw=black] (1.2-.3*.7,.45-.3*.7)rectangle(1.5-.3*.7,.3-.3*.7);
\filldraw [fill=gr,draw=black] (1.5-.3*.7,.3-.3*.7)rectangle(1.8-.3*.7,-.3*.7);
\filldraw [fill=gr,draw=black] (1.5-.3*.7,.45-.3*.7)rectangle(1.8-.3*.7,.3-.3*.7);
\filldraw [fill=gr,draw=black] (1.8-.3*.7,.3-.3*.7)rectangle(2.1-.3*.7,-.3*.7);
\filldraw [fill=dgr,draw=black] (2.1,.3)--(2.1-.3*.7,.3-.3*.7)--(2.1-.3*.7,-.3*.7)--(2.1,0)--(2.1,.3);
\filldraw [fill=dgr,draw=black] (1.8,.45)--(1.8-.3*.7,.45-.3*.7)--(1.8-.3*.7,.3-.3*.7)--(1.8,.3)--(1.8,.45);
\filldraw [fill=dgr,draw=black] (1.2,.45)--(1.2-.3*.7,.45-.3*.7)--(1.2-.3*.7,.6-.3*.7)--(1.2,.6)--(1.2,.45);

\filldraw [fill=lgr,draw=black] (-.3*.7,.45-.3*.7)--(-.6*.7,.45-.6*.7)--(.3-.6*.7,.45-.6*.7)--(.3-.3*.7,.45-.3*.7)--(-.3*.7,.45-.3*.7);
\filldraw [fill=lgr,draw=black] (.3-.3*.7,.45-.3*.7)--(.3-.6*.7,.45-.6*.7)--(.6-.6*.7,.45-.6*.7)--(.6-.3*.7,.45-.3*.7)--(.3-.3*.7,.45-.3*.7);
\filldraw [fill=gr,draw=black] (-.6*.7,.45-.6*.7)rectangle(.3-.6*.7,.3-.6*.7);
\filldraw [fill=gr,draw=black] (.3-.6*.7,.45-.6*.7)rectangle(.6-.6*.7,.3-.6*.7);
\filldraw [fill=dgr,draw=black] (.6-.3*.7,.3-.3*.7)--(.6-.6*.7,.3-.6*.7)--(.6-.6*.7,.45-.6*.7)--(.6-.3*.7,.45-.3*.7)--(.6-.3*.7,.3-.3*.7);

\filldraw [fill=gr,draw=black] (.6-.6*.7,.3-.6*.7)rectangle(.9-.6*.7,-.6*.7);
\filldraw [fill=dgr,draw=black] (.9-.3*.7,.3-.3*.7)--(.9-.6*.7,.3-.6*.7)--(.9-.6*.7,-.6*.7)--(.9-.3*.7,0-.3*.7)--(.9-.3*.7,.3-.3*.7);
\filldraw [fill=lgr,draw=black] (.6-.3*.7,.3-.3*.7)--(.6-.6*.7,.3-.6*.7)--(.9-.6*.7,.3-.6*.7)--(.9-.3*.7,.3-.3*.7)--(.6-.3*.7,.3-.3*.7);
\end{tikzpicture}
}

}
\end{picture}

The algorithm leads to $h=t^9*s^4+ \frac{1}{2} s^5 + \frac{1}{2}s^3+s^1$, however without shifting the boxes we get $h=t^9*t^4*s^0 + \frac{1}{2}s^6+ \frac{1}{2}s^5+s^1$.

The example shows that shifting the boxes is not really necessary to get a decomposition in extremal points and again that this decomposition is not unique. But that it may be important to get a totally ordered chain of extremal points.
\end{exa}

Now we know that $\Ex(d)$ is a generating system of \Hc$(d)$. It remains to show that the points are extremal:

\begin{lemma}
Let $v\in\Ex(d)$. Then $v$ is not in the convex hull of the remaining points in $\Ex(d)$.
\end{lemma}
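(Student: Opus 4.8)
The plan is to argue by induction on $d$, following the recursive description of $\Ex(d)$ in Theorem~\ref{extr}. A normalization makes the bookkeeping uniform: every element of $\Ex(d)$ is the h-vector of an $R$-algebra $R/I$ and hence has first coordinate $1$; so if $v=\sum_i q_i w_i$ is any nonnegative combination of distinct points $w_i\in\Ex(d)\setminus\{v\}$ with $v=\sum_i q_i w_i$, comparing first coordinates forces $\sum_i q_i=1$, i.e.\ the combination is automatically convex. Thus it is enough to show that no $v\in\Ex(d)$ lies in $\mathrm{conv}\bigl(\Ex(d)\setminus\{v\}\bigr)$. The base case $d\le n-1$ is immediate: there $\Ex(d)$ consists of the vectors $\mathbf 1_j=(1,\dots,1,0,\dots,0)$ with $j$ leading ones, $1\le j\le d+1$. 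If $\mathbf 1_j$ were a convex combination of the others, then for $j\le d$ coordinate $j-1$ would force all weight onto the $\mathbf 1_i$ with $i>j$, while coordinate $j$ would force that same total weight to be $0$; for $j=d+1$ coordinate $d$ gives the contradiction directly.

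For the inductive step write $d=nm+r$ with $r\in\{0,\dots,n-1\}$ and (necessarily) $m\ge 1$. The first thing I would do is a short bookkeeping computation of the generators of $\Ex(d)$ along two or three well-chosen coordinates: every $v\in\Ex(d-1)$ has $v_d=0$; the maximal vector satisfies $s^d_d=m+1$ and $s^d_{nm-1}=m$; and, when $r\le n-2$, both $t^d$ and every $t^d*w$ with $w\in\Ex(d-2r-3)$ satisfy $h_d=1$ and $h_{nm-1}=0$ — the last assertion because in $t^d*w$ the glued copy of $w$ occupies exactly the coordinates $r+1,\dots,nm-2$, while the final run of $r+1$ ones of the tower $t^d$ sits at coordinates $nm,\dots,nm+r=d$ and is preceded by a run of $n-r-1\ge 1$ zeros ending precisely at coordinate $nm-1$.

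Granting this, fix $v\in\Ex(d)$ and a convex combination $v=\sum_w q_w w$ over $w\in\Ex(d)\setminus\{v\}$. If $v=s^d$, coordinate $d$ is already impossible, since every other $w$ has $w_d\le 1<m+1$. If $v\in\Ex(d-1)$, coordinate $d$ forces $q_w=0$ for every $w\notin\Ex(d-1)$ (all such $w$ have $w_d\ge 1$ while $v_d=0$), whence $v\in\mathrm{conv}\bigl(\Ex(d-1)\setminus\{v\}\bigr)$, contradicting the induction hypothesis. Finally, if $v$ lies in the tower family $\{t^d\}\cup t^d*\Ex(d-2r-3)$ (so $r\le n-2$), coordinate $nm-1$ forces $q_{s^d}=0$ — there $v$ and all other tower-family members vanish and the $\Ex(d-1)$-entries are $\ge 0$, while $s^d$ has entry $m>0$ — and then coordinate $d$ forces all remaining weight onto the tower family minus $v$, since there the $\Ex(d-1)$-points contribute $0$ and every remaining generator contributes $1$.

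It remains to rule out the tower-family case, which is the genuinely recursive step. If $d-2r-3<0$ the tower family is just $\{t^d\}$ and the previous paragraph already gives $1=\sum_{\emptyset}q_w=0$. Otherwise I would use the affine "un-gluing" map $\pi$ sending an h-vector $h$ of length $d+1$ to $(h_{r+1},\dots,h_{nm-2})-(t^d_{r+1},\dots,t^d_{nm-2})\in\mathbb{Q}^{\,d-2r-2}$, which satisfies $\pi(t^d)=0$ and $\pi(t^d*w)=w$ (padded with zeros). Applying $\pi$ to the relation from the previous paragraph yields a convex combination, within $\{0\}\cup\Ex(d-2r-3)$, equal to $\pi(v)$; since every point of $\Ex(d-2r-3)$ has first coordinate $1$, the summand $0$ cannot occur, so either $\pi(v)=0$ is a convex combination of vectors with first coordinate $1$ (impossible) or $\pi(v)\in\Ex(d-2r-3)$ is a convex combination of the remaining points of $\Ex(d-2r-3)$, contradicting the induction hypothesis because $d-2r-3<d$; a picture analogous to Example~\ref{notextr} keeps the indices honest. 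The computations here are all elementary, and the step I expect to require the most care is exactly this last one: pinning down the coordinate positions occupied by the glued block and by the runs of the tower, and observing that $\pi$ turns the tower family into precisely $\{0\}\cup\Ex(d-2r-3)$ — which is what makes the induction close — together with the two degenerate situations ($r=n-1$, no tower generators; $d-2r-3<0$, empty glued family).
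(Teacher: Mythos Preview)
Your proof is correct and follows essentially the same approach as the paper's: both argue by induction on $d$, both use coordinate $d$ to separate generators by length and coordinate $nm-1$ to isolate the tower family from $s^d$, and both recurse by ``un-gluing'' the $*$-operator (your affine map $\pi$ is exactly the paper's observation that $t^\ell*w=\sum q_i(t^\ell*w^i)$ forces $w=\sum q_i w^i$). The only organizational difference is that you dispatch all of $\Ex(d-1)$ at once via the induction hypothesis, whereas the paper treats every $s^\ell$ ($\ell\le d$) directly in one case and every $t^\ell*w$ in another; the underlying coordinate arguments are identical.
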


\begin{proof}
Let $n \in \mathbb{N}$ \,and\,$d \geq1$ be fixed, $v=\sum_{i \geq 0} q_iv^i$ with $v^i \in \Ex(d)$ and $q_i\geq 0.$
Since $v_0^i= 1$ for all $v^i \in \Ex(d)$ we get $\sum q_i =1.$
\begin{itemize}
\item [(i)] If $v=s^\ell$ with $\ell \leq d$ each $v^i$ belongs to $\Ex(\ell).$\\
            In the cases $\ell \leq n-1$ or $\ell=n\!\cdot \!k-1$ for any $k$, the only extremal point in $\Ex(\ell)$ with 
            nontrivial $(\ell+1)$-th component is $s^\ell$.\\
            Otherwise $v_\ell=s_\ell = \lfloor\frac{\ell}{n}\rfloor+1 >1$ \,and the only extremal points of length \linebreak $\ell+1$             with nontrivial $(\ell+1)$-th component except $s^\ell$ are $t^\ell$ and $t^\ell*w$ for \linebreak
            $w\in\Ex(\ell-3-2r)$ with $\ell=nm+r,\; r \in \{0,\dots, n-1\}$ and their entry is always 1.
            Assuming $v^i \neq s^\ell$ we get the contradiction $v_\ell=(\sum q_iv^i)_\ell \leq 1.$
\item [(ii)] Let $v=t^\ell*w$ with $\ell=nm+r, r \neq n-1$ and  $w\in\Ex(\ell-2r-3) \cup\{(0)\}$.
             We know $v_{nm-1}=0$ \,and\,$v_0=v_{nm}=\!...=v_{nm+r+1}=1,$ therefore all $v^i$ in the decomposition are of the              form $t^\ell*w^i$ with  $w^i\in\Ex(\ell-2r-3)\cup \{(0)\}$.\\
             As the $*$-operator is a shifted addition we get\\
             $$v=t^\ell*w=\sum q_i(t^\ell*w^i)=(\sum q_i) t^\ell* \sum q_iw^i=t^\ell*\sum q_iw^i.$$
             This gives $w=\sum q_iw^i $ and by induction over $d$ we are done.\qedhere
          
\end{itemize}
\end{proof}

\newpage
\section*{Appendix}

Flowchart of the \textbf{ ALGORITHM:}\\

\col{

\tikzstyle{decision} = [shape aspect=2,diamond, draw, fill=blueberry!10,  text width=4em, text centered, inner sep=0pt]
\tikzstyle{block1} = [rectangle, draw, fill=blueberry!10,  minimum size=9mm, text centered]
\tikzstyle{block2} = [rectangle, draw, fill=blueberry!10,  text width=6em, text centered, minimum height=3em]
\tikzstyle{block3} = [rectangle, draw, fill=blueberry!10,  text width=52mm, text centered,minimum height=3em]
\tikzstyle{line} = [draw, -latex']

\tikzstyle{para1} = [draw,trapezium,trapezium left angle=75,trapezium right angle=-75,fill=plum!10,minimum size=8mm]
\tikzstyle{para2} = [draw,trapezium,trapezium left angle=75,trapezium right angle=-75,fill=cranberry!20,minimum size=8mm]
\vspace{2em}

    \begin{tikzpicture}[node distance = 1cm, auto,scale=0.68, transform shape]
    \node [para1] (eingabe) {$ n \in \mathbb{N},\;\, d\!=\!d_0\geq 1,\; \underline{h}\!=\!h\!=\!(h_0,\dots,h_d) \in \mathbb{Q}_{\geq0}^{d+1},\;  h_j\!=\!0 \:\, $and$ \;s_j\!=\!1\;$for$\; j<0, \;\, p_0=\infty,\;\, i=0$};
    \node [block3, below right of=eingabe,node distance=30mm] (q) {\begin{minipage}{55mm}$m = \lfloor \frac{d}{n}\rfloor, \;r=d-m \cdot n \\q=\underset{j \in \{0,\cdots,d\}}{\min}\{\frac{h_j}{s_j}\}, d\!<\!0\!: \, q\!=\!0$\\ \end{minipage}};
    \node [decision, below of=q, node distance=25mm] (diff) {$p_i\!-\!q\!>\!0$};
    \node [block1, right of=diff, node distance=43mm] (qgross) {$q_{s^d}=p_i,\; p_i=0$};
    \node [block3, below of=qgross, node distance=28mm] (iklein) {\begin{minipage}{54mm}$h\!=\!g^i\!-\!\sum\limits_{v \leq s^{d_i\!-\!2r_i\!-\!3}}q_{v}(\underbrace{0,\!..,0}_{r_i\!+\!1},v), \\  q_{t^{d_i}\! \ast v}\!=\!q_v \quad \text{and}\\ q_{v}\!=\!0  \quad \text{for}\; v\!\leq \!s^{d_i\!-\!2r_i\!-\!3},\\ q_{t^{d_i}}=p_i,\\  d=d_i-1,\quad  i=i-1$\end{minipage}};
    \node [block2, below of=diff, node distance=20mm] (neux) {$p_i=p_i-q$ $ h=h-q\cdot s^d$ $ q_{s^d}=q$};
    \node [decision, below of=neux, node distance=20mm] (max) {$h_d=0$};
    \node [decision, left of=max, node distance=29mm] (odd) {$r \!= \!n\!-\!1$};
    \node [decision, above of=odd, node distance=16mm] (io) {$i=0$};
    \node [block1, left of=io, node distance=43mm] (dmin2){\begin{minipage}{50.8mm}$h_j\!=\!\begin{cases} \! h_j\!-\!h_d & \text{if} \, j\!=\!r \,\text{mod}\, n \\ \! h_j & \text{else},\end{cases} \\ d=d-1 $\end{minipage}};
    \node [decision,above of=dmin2, node distance=22mm](xjklein0){$\exists \!\!: h_j\!<\!0$};
    \node [para2, above of=io, node distance=22mm] (xnotcone) {$\underline{h} \notin \mathbb{H}$};
    \node [decision, below of=odd, node distance=16mm] (kleinpi){$h_d\leq p_i$};
    \node [block1, left of=kleinpi, node distance= 25mm] (gleichpi) {$h_d=p_i$};
    \node [block1, below right of=gleichpi, node distance= 25mm] (dmin3) {\begin{minipage}{30mm}$i=i+1, p_i=h_d\\g^i=h-h_d\cdot t^d$ \end{minipage}};
    \node [decision, below of=dmin3, node distance= 18mm] (yjklein0) {$\exists\!\!: g_j^i\!<\!0$};
    \node [decision, left of=yjklein0, node distance= 30mm] (igleich1) {$i=1$};
    \node [para2, above of= igleich1, node distance=16mm] (nichthcone) {$\underline{h} \notin \mathbb{H}$};
    \node [block1, left of= gleichpi, node distance= 29mm] (imin1) {$i=i-1$};
    \node [block1, below left of=yjklein0, node distance= 39mm] (x)  {\begin{minipage}{86.2mm}$ h_j\!= \! g_{j\!+\!r\!+\!1}^i\!-\!g_{mn\!+\!k}^i,\; \text{if}\, j\!+\!r\!+\!1 \,\text{mod}\; n \!=\! k \!\in \! \{0,\!..,r\!-\!1\}, \\ h_j\!=g_{j\!+\!r\!+\!1}^i,\hspace{11.5mm} \text{if}\, j\!+\!r\!+\!1 \,\text{mod}\; n  \notin  \{0,\!..,r\!-\!1\}  \\ \text{and}\;\;h_j=0\;\; \text{if} \;\;j < 0 \;\;\text{or}\;\; j > d-2r-3, \\ d_i=d, \quad r_i = r,\\p_{i-1}=p_{i-1}-p_i, \quad d=d-2r-3$ \end{minipage}};
    \node [block1, below of=max,node distance=48mm] (dim) {$d=d-1$};
    \node [decision, below of=dim, node distance=52mm] (dklein) {$d< 1$};
    \node [block1, below of=dklein, node distance=17mm] (x1) {$q_{s^0}=h_0$};
    \node [decision, below of=x1, node distance=17mm] (i0) {$i=0$};
    \node [decision, right of=i0, node distance=28mm] (x1kleinp1) {$h_0 \leq p_i$};
    \node [block1, right of=x1kleinp1, node distance=32mm] (pi0) {$q_{s^0}=p_i, p_i=0$};
    \node [block1, above of=x1kleinp1, node distance=17mm] (piminx1) {$p_i=p_i-h_0$};
    \node [para1, below left of=i0, node distance=22mm] (qi) {$\qquad \qquad \qquad \qquad(q_{v}),v \in Ex(d_0)$\qquad with \qquad$\underline{h}=\sum q_{v}\cdot v$};

    \draw[-latex']  ($(eingabe)+(21.6mm,-4mm)$)-- (q);
    \path [line] (q) --  (diff);
    \path [line] (diff) -- node  {yes} (neux);
    \path [line] (diff) -- node {no} (qgross);
    \path [line] (qgross) -- (iklein);
    \draw[-latex'] ($(iklein)+(22mm,15.9mm)$)|-(q);
    \path [line] (neux) -- (max);
    \path [line] (max)-- node {no} (odd);
    \path [line] (odd) -- node {yes} (io);
    \path [line] (odd) -- node {no} (kleinpi);
    \path [line] (kleinpi) -- node {no} (gleichpi);
    \draw[-latex'] ($(gleichpi)+(5mm,-4.7mm)$) -- ($(dmin3)+(-12.7mm,5.6mm)$);
    \draw[-latex'] (kleinpi)-- node {yes} ($(dmin3)+(7.7mm,5.6mm)$);
    \draw[-latex'] ($(x)+(-40mm,13.7mm)$)|-(q);
    \path [line] (dmin3)--(yjklein0);
    \path [line] (yjklein0)--node {no}($(x)+(27.5mm,13.5mm)$);
    \path [line] (yjklein0)-- node {yes} (igleich1);
    \path [line] (igleich1)-- node {yes} (nichthcone);
    \draw[-latex'] (igleich1)-|node [near end]{no} (imin1);
    \path [line] (imin1)-- ($(dmin2)+(-11mm,-9.6mm)$);
    \path [line] (io)-- node {yes} (xnotcone);
    \path [line] (io)-- node {no} (dmin2);
    \path [line] (dmin2)--(xjklein0);
    \path [line] (xjklein0)-- node {yes} (xnotcone);
    \draw[-latex'] (xjklein0)|- node [near start] {no}($(q)+(-27.5mm,-3mm)$);
    \path [line] (max)-- node {yes} (dim);
    \draw[-latex'] (dklein)-- node [near start] {no} ++(-110mm,0) |- ($(q)+(-27.5mm,3mm)$);
    \path [line] (dim) -- (dklein);
    \path [line] (dklein) -- node {yes} (x1);
    \path [line] (x1) -- (i0);
    \path [line] (i0) -- node {no} (x1kleinp1);
    \path [line] (x1kleinp1) -- node {no} (pi0);
    \path [line] (x1kleinp1) -- node {yes} (piminx1);
    \draw[-latex'] (piminx1)--($ (iklein)+(-16.2mm,-16mm)$);
    \draw[-latex'] (pi0)--($(iklein)+(15.2mm,-16mm)$);
    \draw[-latex'] (i0) -- node [near start]{yes} ($(qi)+(15.4mm,4mm)$);
\end{tikzpicture}}

\bw{

\tikzstyle{decision} = [shape aspect=2,diamond, draw, fill=gr!7, 
    text width=4em, text centered, inner sep=0pt]
\tikzstyle{block1} = [rectangle, draw, fill=gr!7, 
    minimum size=9mm, text centered]
\tikzstyle{block2} = [rectangle, draw, fill=gr!7, 
    text width=6em, text centered, minimum height=3em]
\tikzstyle{block3} = [rectangle, draw, fill=gr!7, 
    text width=52mm, text centered,minimum height=3em]
\tikzstyle{line} = [draw, -latex']

\tikzstyle{para1} = [draw,trapezium,trapezium left angle=75,trapezium right angle=-75,fill=gr!7,minimum size=8mm]
\tikzstyle{para2} = [draw,trapezium,trapezium left angle=75,trapezium right angle=-75,fill=gr!7,minimum size=8mm]

\vspace{2em}

\begin{tikzpicture}[node distance = 1cm, auto,scale=0.68, transform shape]
    \node [para1] (eingabe) {$ n \in \mathbb{N},\;\, d\!=\!d_0\geq 1,\; \underline{h}\!=\!h\!=\!(h_0,\dots,h_d) \in \mathbb{Q}_{\geq0}^{d+1},\;  h_j\!=\!0 \:\, $and$ \;s_j\!=\!1\;$for$\; j<0, \;\, p_0=\infty,\;\, i=0$};
    \node [block3, below right of=eingabe,node distance=30mm] (q) {\begin{minipage}{55mm}$m = \lfloor \frac{d}{n}\rfloor, \;r=d-m \cdot n \\q=\underset{j \in \{0,\cdots,d\}}{\min}\{\frac{h_j}{s_j}\}, d\!<\!0\!: \, q\!=\!0$\\ \end{minipage}};
    \node [decision, below of=q, node distance=25mm] (diff) {$p_i\!-\!q\!>\!0$};
    \node [block1, right of=diff, node distance=43mm] (qgross) {$q_{s^d}=p_i,\; p_i=0$};
    \node [block3, below of=qgross, node distance=28mm] (iklein) {\begin{minipage}{54mm}$h\!=\!g^i\!-\!\sum\limits_{v \leq s^{d_i\!-\!2r_i\!-\!3}}q_{v}(\underbrace{0,\!..,0}_{r_i\!+\!1},v), \\  q_{t^{d_i}\! \ast v}\!=\!q_v \quad \text{and}\\ q_{v}\!=\!0  \quad \text{for}\; v\!\leq \!s^{d_i\!-\!2r_i\!-\!3},\\ q_{t^{d_i}}=p_i,\\  d=d_i-1,\quad  i=i-1$\end{minipage}};
    \node [block2, below of=diff, node distance=20mm] (neux) {$p_i=p_i-q$ $ h=h-q\cdot s^d$ $ q_{s^d}=q$};
    \node [decision, below of=neux, node distance=20mm] (max) {$h_d=0$};
    \node [decision, left of=max, node distance=29mm] (odd) {$r \!= \!n\!-\!1$};
    \node [decision, above of=odd, node distance=16mm] (io) {$i=0$};
    \node [block1, left of=io, node distance=43mm] (dmin2){\begin{minipage}{50.8mm}$h_j\!=\!\begin{cases} \! h_j\!-\!h_d & \text{if} \, j\!=\!r \,\text{mod}\, n \\ \! h_j & \text{else},\end{cases} \\ d=d-1 $\end{minipage}};
    \node [decision,above of=dmin2, node distance=22mm](xjklein0){$\exists \!\!: h_j\!<\!0$};
    \node [para2, above of=io, node distance=22mm] (xnotcone) {$\underline{h} \notin \mathbb{H}$};
    \node [decision, below of=odd, node distance=16mm] (kleinpi){$h_d\leq p_i$};
    \node [block1, left of=kleinpi, node distance= 25mm] (gleichpi) {$h_d=p_i$};
    \node [block1, below right of=gleichpi, node distance= 25mm] (dmin3) {\begin{minipage}{30mm}$i=i+1, p_i=h_d\\g^i=h-h_d\cdot t^d$ \end{minipage}};
    \node [decision, below of=dmin3, node distance= 18mm] (yjklein0) {$\exists\!\!: g_j^i\!<\!0$};
    \node [decision, left of=yjklein0, node distance= 30mm] (igleich1) {$i=1$};
    \node [para2, above of= igleich1, node distance=16mm] (nichthcone) {$\underline{h} \notin \mathbb{H}$};
    \node [block1, left of= gleichpi, node distance= 29mm] (imin1) {$i=i-1$};
    \node [block1, below left of=yjklein0, node distance= 39mm] (x)  {\begin{minipage}{86.2mm}$ h_j\!= \! g_{j\!+\!r\!+\!1}^i\!-\!g_{mn\!+\!k}^i,\; \text{if}\, j\!+\!r\!+\!1 \,\text{mod}\; n \!=\! k \!\in \! \{0,\!..,r\!-\!1\}, \\ h_j\!=g_{j\!+\!r\!+\!1}^i,\hspace{11.5mm} \text{if}\, j\!+\!r\!+\!1 \,\text{mod}\; n  \notin  \{0,\!..,r\!-\!1\}  \\ \text{and}\;\;h_j=0\;\; \text{if} \;\;j < 0 \;\;\text{or}\;\; j > d-2r-3, \\ d_i=d, \quad r_i = r,\\p_{i-1}=p_{i-1}-p_i, \quad d=d-2r-3$ \end{minipage}};
    \node [block1, below of=max,node distance=48mm] (dim) {$d=d-1$};
    \node [decision, below of=dim, node distance=52mm] (dklein) {$d< 1$};
    \node [block1, below of=dklein, node distance=17mm] (x1) {$q_{s^0}=h_0$};
    \node [decision, below of=x1, node distance=17mm] (i0) {$i=0$};
    \node [decision, right of=i0, node distance=28mm] (x1kleinp1) {$h_0 \leq p_i$};
    \node [block1, right of=x1kleinp1, node distance=32mm] (pi0) {$q_{s^0}=p_i, p_i=0$};
    \node [block1, above of=x1kleinp1, node distance=17mm] (piminx1) {$p_i=p_i-h_0$};
    \node [para1, below left of=i0, node distance=22mm] (qi) {$\qquad \qquad \qquad \qquad(q_{v}),v \in Ex(d_0)$\qquad with \qquad$\underline{h}=\sum q_{v}\cdot v$};

    \draw[-latex']  ($(eingabe)+(21.6mm,-4mm)$)-- (q);
    \path [line] (q) --  (diff);
    \path [line] (diff) -- node  {yes} (neux);
    \path [line] (diff) -- node {no} (qgross);
    \path [line] (qgross) -- (iklein);
    \draw[-latex'] ($(iklein)+(22mm,15.9mm)$)|-(q);
    \path [line] (neux) -- (max);
    \path [line] (max)-- node {no} (odd);
    \path [line] (odd) -- node {yes} (io);
    \path [line] (odd) -- node {no} (kleinpi);
    \path [line] (kleinpi) -- node {no} (gleichpi);
    \draw[-latex'] ($(gleichpi)+(5mm,-4.7mm)$) -- ($(dmin3)+(-12.7mm,5.6mm)$);
    \draw[-latex'] (kleinpi)-- node {yes} ($(dmin3)+(7.7mm,5.6mm)$);
    \draw[-latex'] ($(x)+(-40mm,13.7mm)$)|-(q);
    \path [line] (dmin3)--(yjklein0);
    \path [line] (yjklein0)--node {no}($(x)+(27.5mm,13.5mm)$);
    \path [line] (yjklein0)-- node {yes} (igleich1);
    \path [line] (igleich1)-- node {yes} (nichthcone);
    \draw[-latex'] (igleich1)-|node [near end]{no} (imin1);
    \path [line] (imin1)-- ($(dmin2)+(-11mm,-9.6mm)$);
    \path [line] (io)-- node {yes} (xnotcone);
    \path [line] (io)-- node {no} (dmin2);
    \path [line] (dmin2)--(xjklein0);
    \path [line] (xjklein0)-- node {yes} (xnotcone);
    \draw[-latex'] (xjklein0)|- node [near start] {no}($(q)+(-27.5mm,-3mm)$);
    \path [line] (max)-- node {yes} (dim);
    \draw[-latex'] (dklein)-- node [near start] {no} ++(-110mm,0) |- ($(q)+(-27.5mm,3mm)$);
    \path [line] (dim) -- (dklein);
    \path [line] (dklein) -- node {yes} (x1);
    \path [line] (x1) -- (i0);
    \path [line] (i0) -- node {no} (x1kleinp1);
    \path [line] (x1kleinp1) -- node {no} (pi0);
    \path [line] (x1kleinp1) -- node {yes} (piminx1);
    \draw[-latex'] (piminx1)--($ (iklein)+(-16.2mm,-16mm)$);
    \draw[-latex'] (pi0)--($(iklein)+(15.2mm,-16mm)$);
    \draw[-latex'] (i0) -- node [near start]{yes} ($(qi)+(15.4mm,4mm)$);
\end{tikzpicture}}
\vspace{1cm}

The $v'$s here are always elements of $\Ex(d), \,d$ being large enough.

\pagebreak

\section*{Acknowledgements}
We would like to thank the organizers of the workshop P.R.A.G.Mat.I.C, in particular Prof. Alfio Ragusa and Guiseppe Zappal\`a for providing an excellent environment for collaboration and research in Catania, Italy in the summer 2011.
We also would like to thank Prof. Mats Boij and Prof. Ralf Fr\"oberg as well as Dr. Alexander Engstr\"om for their outstanding lectures providing interesting problems, their support, suggestions and ideas.\\

\nocite{*}
\bibliography{hcone.v1}

\begin{thebibliography}{aaaaaa}
\bibitem[BB10]{Ba}
  B.Barwick, J. Biermann, D.Cook II, W.F. Moore, C. Raicu, D. Stamate,\\
  {\em Boij-S\"oderberg theory for non-standard graded rings},\\
  \url{http://math.berkeley.edu/~claudiu/mrc/nonStdBetti.pdf}, 2010
\bibitem[Bo11]{Bo}
  Mats Boij,
  {\em Lecture Notes} to the workshop PRAGMATIC, Catania, 2011
\bibitem[BH98]{Br}
  Winfried Bruns, J\"urgen Herzog,
  {\em Cohen-Macaulay rings},\\
  Cambridge University Press, vol.39, Cambridge, 1998
\bibitem[DS08]{DS}
  Giorgio Dalzotto, Enrico Sbarra,
  {\em On non-standard graded algebras},\\
  Toyama Math. J., vol.31, pp.33-57, 2008
\bibitem[Fls11]{Fl}
  Gunnar Fl{\o}ystad,
  {\em Boij-S\"oderberg Theory: Introduction and Survey},\\
  \url{http://arxiv.org/abs/1106.0381}, 2011
\bibitem[MS05] {St}
	Ezra Miller, Bernd Sturmfels,
	{\em Combinatorial Commutative Algebra},\\
	Graduate Texts in Mathematics, vol.227, Springer, Berlin, 2005 
\bibitem[Sm10]{BS}
  Gregory G. Smith,
  {\em Cones of Hilbert Functions},
  Oberwolfach Report 27,\\
   \url{www.mfo.de/programme/schedule/2010/25/OWR_2010_27.pdf}, 2010

\end{thebibliography}

\end{document}